\newcommand\E{{\mathbb E}}
\newcommand\N{{\mathbb N}}
\newcommand\R{{\mathbb R}}
\newcommand\Sp{{\mathbb S}}
\newcommand\PPP{{\mathbf P}}
\DeclareMathOperator{\supp}{supp}
\DeclareMathOperator{\Det}{det}
\DeclareMathOperator{\Div}{div}
\def\BB{{\mathcal B}}
\def\EE{{\mathcal E}}
\def\MM{{\mathcal M}}
\def\SS{{\mathcal S}}
\def\VV{{\mathcal V}}
\def\WW{{\mathcal W}}
\def\eps{{\varepsilon}}
\newcommand{\Ps}[2]{\left\langle#1,#2\right\rangle}
\newcommand{\norm}[1]{\lVert#1\rVert}
\newcommand{\Norm}[1]{\left\lVert#1\right\rVert}
\newcommand{\abs}[1]{\lvert#1\rvert}
\newcommand{\Abs}[1]{\left\lvert#1\right\rvert}
\newcommand{\wto}{\rightharpoonup}
\newcommand{\beqn}{\begin{equation}}
\newcommand{\eeqn}{\end{equation}}
\newcommand{\bal}{\begin{aligned}}
\newcommand{\eal}{\end{aligned}}
\newtheorem{thm}{Theorem}
\newtheorem{lemma}[thm]{Lemma}
\newtheorem{definition}[thm]{Definition}
\theoremstyle{definition}
\newtheorem{pb}{Problem}
\theoremstyle{remark}
\newtheorem{rem}[thm]{Remark}
\title[Kac's chaos on the Boltzmann's sphere]
{Quantitative and qualitative Kac's chaos on the Boltzmann's sphere}
\author{Kleber Carrapatoso}
\date{}
\address{CEREMADE - Universit\'e Paris Dauphine\\
Place du Mar\'echal De Lattre De Tassigny\\
75775 Paris cedex 16 - FRANCE
}
\email{carrapatoso@ceremade.dauphine.fr}
\begin{document}

\begin{abstract}
We investigate the construction of chaotic probability measures on the Boltzmann's sphere, which is the state space of the stochastic process of a many-particle system undergoing a dynamics preserving energy and momentum. 

Firstly, based on a version of the local Central Limit Theorem (or Berry-Esseen theorem), we construct a sequence of probabilities that is Kac chaotic and we prove a quantitative rate of convergence. Then, we investigate a stronger notion of chaos, namely entropic chaos introduced in \cite{CCLLV}, and we prove, with quantitative rate, that this same sequence is also entropically chaotic.

Furthermore, we investigate more general class of probability measures on the Boltzmann's sphere. Using the HWI inequality we prove that a Kac chaotic probability with bounded Fisher's information is entropically chaotic and we give a quantitative rate. We also link different notions of chaos, proving that Fisher's information chaos, introduced in \cite{HaurayMischler}, is stronger than entropic chaos, which is stronger than Kac's chaos. We give a possible answer to \cite[Open Problem 11]{CCLLV} in the Boltzmann's sphere's framework.

Finally, applying our previous results to the recent results on propagation of chaos for the Boltzmann equation \cite{MMchaos}, we prove a quantitative rate for the propagation of entropic chaos for the Boltzmann equation with Maxwellian molecules.
\end{abstract}

\maketitle

\paragraph{\bf Mathematics Subject Classification (2010):}76P05, 60G50, 54C70, 82C40.

\bigskip

\paragraph{\bf Keywords:} Kac's chaos; entropic chaos; Fisher's information chaos; many-particle jump process; entropy; Fisher's information; mean-field limit; Central Limit Theorem; Berry-Esseen; HWI inequality; Boltzmann equation.

\tableofcontents

\newpage
\section{Introduction}\label{sec:intro}

\subsection{Motivation}
In his celebrated paper \cite{Kac1956}, Kac introduced the notion of propagation of chaos in order to connect a stochastic process of a system of $N$ identical particles undergoing binary collisions to its mean field equation.

Our interest in this paper is to investigate chaotic distributions supported by the phase space of the stochastic process of the $N$-particle system as we shall explain.
We refer to \cite{CCLLV} for a detailed introduction on this topic and on Kac's paper \cite{Kac1956}.

Consider a system of $N$ identical particles of mass $\rho>0$ such that its evolution is described by a jump process with binary collisions that preserves energy and momentum. Let us denote by $i,j$ the particles undergoing the collision, with pre-collisional velocities $v_i,v_j \in \R^d$ and post-collisional velocities $v_i^\ast,v_j^\ast \in \R^d$. We have then the conservation of momentum
$$
\rho v_i^\ast + \rho v_j^\ast = \rho v_i + \rho v_j,
$$
and the conservation of energy
$$
\frac{\rho}{2}|v_i^\ast|^2 + \frac{\rho}{2}|v_j^\ast|^2 = \frac{\rho}{2}|v_i|^2 + \frac{\rho}{2}|v_j|^2.
$$

If the system has initial energy $\EE = \frac{1}{2} \sum_{i=1}^N \rho |v_i|^2\in\R_+$ and initial momentum 
$M=\rho m = \sum_{i=1}^N \rho v_i \in \R^d$, then both energy and momentum will be unchanged under the dynamics. The phase space of this process is then the manifold $\SS^N(\sqrt{\EE},m) \subset \R^{dN}$ defined by
$$
\SS^N(\sqrt{\EE},m) := \left\{ V=(v_1,\dots,v_N)\in \R^{dN} \,\Big\vert\, \frac12\sum_{i=1}^N \rho|v_i|^2 = \EE,\; \sum_{i=1}^N \rho v_i = \rho m \right\},
$$
which is the intersection of a sphere of radius $\sqrt{2\EE/\rho}$ and a hyperplane. This space $\SS^N(\sqrt{\EE},m)$ is in fact a sphere in $\R^{dN}$ of dimension $d(N-1)-1$ with radius $\sqrt{2\EE/\rho - |m|^2/N}$ and center $(m,\dots,m)/\sqrt{N}$. We remark that we need $|m|^2 \le 2N\EE / \rho$ in order to $\SS^N(\sqrt{\EE},m)$ be non empty.

Now choosing units such that the mass $\rho$ of each particle is equal to 2, the total value of kinetic energy is $dN$ and, without loss of generality, choosing $m=0$, the state space of this dynamics is 
\begin{equation}\label{eq:S^N_B}
\SS^N_\BB :=\SS^N(\sqrt{dN},0)= \left\{ V=(v_1,\dots,v_N)\in \R^{dN} \,\Big\vert\, \sum_{i=1}^N |v_i|^2 = dN,\; \sum_{i=1}^N  v_i =  0 \right\}
\end{equation}
and we shall call the manifold $\SS^N_\BB$ the Boltzmann's sphere.


An example of this kind of dynamics is the space homogeneous Boltzmann model that we shall explain. Given a pre-collisional system of velocities $V=(v_1,\dots,v_N)\in \R^{dN}$ and a collision kernel (for more information on the collision kernel we refer to \cite{Villani-BoltzmannBook,MMchaos})
\begin{equation}\label{eq:noyau}
B(z,\cos \theta) = \Gamma(|z|) b(\cos\theta),
\end{equation}
for some nonnegative functions $\Gamma$ and $b$, the process is:

\begin{itemize}
\item for any $i' \ne j'$, pick a random time $T(\Gamma(|v_{i'} - v_{j'}|))$ of collision accordingly to an exponential law of parameter $\Gamma(|v_{i'} - v_{j'}|)$ and choose the minimum time $T_1$ and the colliding pair $(v_i,v_j)$ such that
$$
T_1 = T(\Gamma(|v_{i} - v_{j}|)) = \min_{i',j'} T(\Gamma(|v_{i'} - v_{j'}|)),
$$

\item draw $\sigma\in \Sp^{d-1}\subset \R^d$ according to the law $b(\cos\theta_{ij})$, with 
$$
\cos\theta_{ij} = \sigma \cdot \frac{(v_i-v_j)}{|v_{i} - v_{j}|},
$$

\item after collision the new velocities become
$$
V^\ast_{ij} = ( v_1,\dots, v_i^\ast,\dots,v_j^\ast,\dots,v_N)
$$
where the post-collisional velocities $v_i^\ast$ and $v_j^\ast$ are given by
\begin{equation}\label{eq:vitesses}
v_i^\ast = \frac{v_i+v_j}{2} + \frac{|v_i-v_j|}{2}\sigma, \qquad
v_j^\ast = \frac{v_i+v_j}{2} - \frac{|v_i-v_j|}{2}\sigma.
\end{equation}
\end{itemize}

Iterating this construction we built then the associated Markov process $(\VV_t)_{t\ge 0}$ on $\R^{dN}$.
The equation of the associated law is given by, after a rescaling of time, (see \cite{MMchaos})
\begin{equation}\label{eq:master}
\partial_t G^N_t = L_N G^N_t = \frac{1}{N} \sum_{i<j} \int_{\Sp^{d-1}} \left[G^N_t (V^\ast_{ij}) - G^N_t (V) \right] B(|v_i-v_j|,\cos\theta)\, d\sigma
\end{equation}
with initial data $G^N_0$ and where $V^\ast_{ij}=(v_1,\dots,v_i^\ast,\dots,v_j^\ast,\dots,v_N)$. This equation is known as the \emph{master equation}.

Associated to this process, we have the (limit) spatially homogeneous Boltzmann equation \cite{MMchaos,MMWchaos,Villani-BoltzmannBook}
\begin{equation}\label{eq:Boltzmann}
\partial_t f(t,v) = \int_{\R^d \times \Sp^{d-1}} B(|v-w|,\cos\theta) 
\Big( f(w^\ast)f(v^\ast) - f(w)f(v) \Big) \, dw\,d\sigma
\end{equation}
with initial data $f(0,\cdot) = f_0$ and
where the post-collisional velocities $v^\ast$ and $w^\ast$ are obtained by 
\eqref{eq:vitesses}.

We shall highlight here the models we consider in the last part of this work (see Theorem~\ref{thm:intro-PropChaos} below), and we refer to \cite{Villani-BoltzmannBook} for more details concerning the collision kernel.
Assuming a collision kernel $B$ derived from inverse-power law interaction potentials
$$
\phi(r) = r^{-(s-1)}, \quad s>2,
$$
we have that the collision kernel has the form
\beqn\label{eq:noyau2}
B(z, \cos\theta)=|z|^\gamma \, b(\cos\theta),\quad \gamma = \frac{s-(2d-1)}{s-1},
\eeqn
where the function $b$ is locally smooth and has a nonintegrable singularity
\beqn\label{eq:b}
\sin^{d-2}\theta\,b(\cos\theta) \sim_{\theta \sim 0} C_b\, \theta^{-1-\nu}, 
\quad \nu \in (0,2),\; C_b >0.
\eeqn
In the particular case of three dimensions $d=3$, we have $\gamma = (s-5)/(s-1)$ and $\nu = 2/(s-1)$.
If we replace the angular collision kernel $b$ by a locally integrable one, we speak of cutoff collision kernels (or Grad's cutoff).

We shall consider in this work the case of Maxwellian molecules, in which the collision kernel does not depend on the relative velocity, i.e.\ $\gamma=0$ in \eqref{eq:noyau2}. 
We consider the general assumption
\begin{equation}\label{eq:Bmaxwell}
\left\{
\bal
& B(|v-w|,\cos\theta)=b(\cos\theta) , \\
& \forall\; \alpha>0, \quad 
\int_{0}^\pi b(\cos\theta)\,(1-\cos\theta)^{\alpha + 1/4} \, \sin^{d-2}\theta \, d\theta < + \infty.
\eal
\right.
\end{equation}
This is the same assumption made in \cite{MMchaos}, since in Theorem~\ref{thm:intro-PropChaos} we use their results.
Remark that \eqref{eq:Bmaxwell} includes the true Maxwellian molecules (or Maxwellian molecules without cutoff) in dimension $d=3$, when $\gamma=0$, $\nu=1/2$ and
\begin{equation}\label{eq:Bmaxwell1}
B(z, \cos\theta)= b(\cos\theta) , \qquad b(\cos\theta) \sim_{\theta \sim 0} C_b\, \theta^{-5/2},\quad (d=3).
\end{equation}
Also, it includes the Grad's cutoff Maxwellian molecules, when the singularity is removed,  
\begin{equation}\label{eq:BmaxwellG}
B(z, \cos\theta) = b(\cos\theta), \qquad \int_{0}^\pi b(\cos \theta) \, \sin^{d-2}\theta \, d\theta < +\infty.
\end{equation}
Some results in Theorem~\ref{thm:intro-PropChaos} will consider the general assumption \eqref{eq:Bmaxwell} and others the cutoff Maxwellian molecules \eqref{eq:BmaxwellG}.

\bigskip
The program set by Kac in \cite{Kac1956} was to investigate the behavior of solutions of the mean field equation \eqref{eq:Boltzmann} in terms of the behaviour of the solutions of the master equation \eqref{eq:master}. Moreover, the notion of propagation of chaos introduced by Kac means that if the initial distribution $G^N_0$ is $f_0$-chaotic (Definition \ref{def:chaos} below) then, for all $t>0$, the solution $G^N_t$ of \eqref{eq:master} 
is $f_t$-chaotic, where $f_t$ is the solution of \eqref{eq:Boltzmann}. For more information on this topic we refer to the recent results of Mischler, Mouhot and Wennberg 
\cite{MMchaos,MMWchaos}.

This paper is inspired by the works of Carlen, Carvalho, Le Roux, Loss and Villani \cite{CCLLV} and also of Hauray and Mischler \cite{HaurayMischler}, which investigate chaotic probabilities on the usual sphere in $\R^N$ with radius $\sqrt{N}$ (also called Kac's sphere). This sphere is the phase space of Kac's model, which is a one-dimensional simplification, introduced in \cite{Kac1956}, of the model presented above, with energy conservation only. 

The novelty here is that we investigate chaotic probability sequences in the Boltzmann's sphere $\SS^N_\BB\subset \R^{dN}$ and, furthermore, we prove quantitative rates of chaos convergence. Moreover, we apply our results to the Boltzmann equation with true Maxwellian molecules to prove quantitative propagation of entropic chaos.

\subsection{Definitions and main results}\label{ssec:results}
Let $E$ be a Polish space, then we shall denote by $\PPP(E)$ the space of Borel probability measures on $E$. Furthermore, through this paper, on the space $E^N$ we will only consider symmetric measures, more precisely, we say that $G^N\in\PPP(E^N)$ is symmetric if for all $\varphi\in C_b(E^N)$ we have
$$
\int_{E^N} \varphi \, dG^N = \int_{E^N} \varphi_\sigma \, dG^N,
$$
for any permutation $\sigma$ of $\{1,\dots,N\}$, and where 
$$
\varphi_\sigma := \varphi(V_\sigma)=\varphi (v_{\sigma(1)},\dots,v_{\sigma(N)}),
$$
for $V=(v_1,\dots,v_N) \in E^N$. 


For $G^N\in\PPP(E^N)$ and a integer $\ell\in[1,N]$ we denote by $G^N_\ell$ (or $\Pi_\ell(G^N)$) the $\ell$-marginal of $G^N$, defined by 
$$
\forall \varphi\in C_b(E^\ell), \qquad 
\int_{E^\ell} \varphi \, dG^N_\ell = \int_{E^N} \varphi \otimes {\bf 1}^{\otimes (N-\ell)}  \, dG^N.
$$

We shall use through the paper the same notation to represent a probability measure and its density with respect to the Lebesgue measure.

We can now give the notion of chaos formalized by Kac in \cite{Kac1956}, we also refer to \cite{S6} for an introduction on this topic with a probabilistic approach and to \cite{MischlerEDPX} for a short survey.

\begin{definition}[Kac's chaos]\label{def:chaos}
Consider $f\in\PPP(E)$.
We say that $G^N \in \PPP(E^N)$ is $f$-chaotic (or $f$-Kac chaotic), if for each fixed positive integer 
$\ell$,  $G^N_\ell$ converges to $f^{\otimes \ell}$ in the sense of measures in $\PPP(E^\ell)$ when $N$ goes to infinity, i.e. if for all $\varphi\in C_b(E^\ell)$,
\begin{equation}\label{eq:def-chaos}
\lim_{N\to\infty} \int_{E^\ell} \varphi \, dG^N_\ell = \int_{E^\ell} \varphi \, df^{\otimes \ell}.
\end{equation}
\end{definition}

In fact, it is well known that we need condition \eqref{eq:def-chaos} to hold for only one $\ell\ge 2$ (see for instance \cite{S6}).

We also introduce the Monge-Kantorovich-Wasserstein (MKW) distance and for more information about it we refer to \cite{VillaniOTO&N}. 
Consider an integer $\ell$ and $p\in[1,\infty)$, we define then the space 
$$
\PPP_{p}(E^\ell):= \left\{ F^\ell \in \PPP(E^\ell) ;\; M_p(F^\ell) :=\int_{E^\ell} |X|^p \, dF^\ell(X) < \infty \right\}.
$$
Then, for $F^\ell,G^\ell \in \PPP_{p}(E^\ell)$ we define the MKW distance between $F^\ell$ and $G^\ell$ by
\begin{equation}\label{eq:MKW}
W_p(F^\ell,G^\ell) := \inf_{\pi\in\Pi(F^\ell,G^\ell)} \left( \int_{E^\ell\times E^\ell} d_{E^\ell}(X,Y)^p\, d\pi(X,Y) \right)^{1/p},
\end{equation}
where $\Pi(F^\ell,G^\ell)$ is the set of transfer plan between $F^\ell$ and $G^\ell$, which is the set of probabilty measures on $E^\ell\times E^\ell$ with marginals $F^\ell$ and $G^\ell$ respectively, 
and where we define the distace $d_{E^\ell}$ as 
$$
\forall\, X=(x_1,\dots,x_\ell),Y=(y_1,\dots,y_\ell) \in E^\ell,\qquad 
d_{E^\ell}(X,Y) := \sum_{i=1}^{\ell} d_E(x_i,y_i).
$$
In the paper we will use the Euclidean distance in $E=\R^d$, i.e. $d_E(x_i,y_i)=|x_i-y_i|$ for all $x_i,y_i \in E$. More precisely, we shall use
$$
\forall\, f,g\in\PPP_1(\R^d), 
\qquad W_1(f,g) = \inf_{\pi\in\Pi(f,g)}  \int_{\R^d\times\R^d } |x-y|\, d\pi(x,y)  
$$
and
$$
\forall\, f,g\in\PPP_2(\R^d), 
\qquad W_2(f,g) = \inf_{\pi\in\Pi(f,g)}  \left(\int_{\R^d\times\R^d } |x-y|^2\, d\pi(x,y)\right)^{1/2}.
$$
Moreover, for $F^N, G^N \in \PPP(\SS^N_\BB)$ we shall use in the definition of $W_p(F^N,G^N)$ the Euclidean distance inherited from $\R^{dN}$, which means that for $X,Y \in \SS^N_\BB$ we shall use $d_{\SS^N_\BB}(X,Y) = |X-Y|$.


\smallskip
Let $\gamma$ be the Gaussian probability measure on $\R^d$, $\gamma(v) = (2\pi)^{-d/2}\, e^{-|v|^2/2}$, and $\mu \in \PPP(\R^d)$. We define the relative entropy of $\mu$ with respect to $\gamma$ by
\begin{equation}\label{eq:def-ent-1}
\begin{aligned}
H(\mu | \gamma) := \int_{\R^d}  \log \frac{d\mu}{d\gamma} \, d\mu ,
\end{aligned}
\end{equation}
if $\mu$ is absolutely continuous with respect to $\gamma$, otherwise $H(\mu | \gamma)~:=~+ \infty$. 

Moreover, for $G^N \in \PPP(\SS^N_\BB)$ we define the relative entropy with respect to $\gamma^N$, the uniform probability measure on $\SS^N_\BB$, by
\begin{equation}\label{eq:def-ent}
\begin{aligned}
H(G^N | \gamma^N) 
&:= \int_{\SS^N_\BB} \left( \log \frac{dG^N}{d\gamma^N}   \right) \, dG^N.
\end{aligned}
\end{equation}


We shall now define a stronger notion of chaos, namely the entropic chaos introduced in 
\cite{CCLLV}.

\begin{definition}[Entropic chaos]\label{def:ent-chaos}
We say that the sequence $G^N \in \PPP(\SS^N_\BB)$ is entropically $f$-chaotic, for some $f\in\PPP(\R^d)$, if $G^N$ is $f$-chaotic in Kac's sense (Definition \ref{def:chaos}) and
\beqn\label{eq:condH}
\lim_{N\to\infty}  \frac{1}{N} \, H(G^N | \gamma^N) = H(f|\gamma)
\eeqn
with $H(f|\gamma)<\infty$.
\end{definition}



Finally, with these definitions at hand we can state the main results of the paper.

\begin{thm}\label{thm:intro-FN}
For any $f\in \PPP_{ 6}(\R^d)\cap L^p(\R^d)$ with $1< p\le \infty$, 
there exists a sequence of probability measures $F^N := [f^{\otimes N}]_{\SS^N_\BB} \in \PPP(\SS^N_\BB)$, contructed by conditioning the $N$-fold tensorization of $f$ to the Boltzmann's sphere, such that
\begin{enumerate}[(i)]

\item $F^N$ is $f$-chaotic. More precisely, for any $\ell\ge 1$ fixed there exists a constant $C=C(\ell)>0$ such that for $N\ge \ell + 1$ we have
$$
W_1 (F^N_\ell,f^{\otimes \ell}) \leq \frac{C}{\sqrt{N}};
$$

\item $F^N$ is entropically $f$-chaotic. More precisely, there exists a constant $C~>~0$ such that
$$
\left|\frac{1}{N} \,  H(F^N|\gamma^N) -H(f|\gamma) \right| \leq \frac{C}{\sqrt{N}}.
$$

\end{enumerate}

\end{thm}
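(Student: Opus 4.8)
The plan is to realise $F^N=[f^{\otimes N}]_{\SS^N_\BB}$ as an honest conditional law and to reduce both assertions to the quantitative local limit theorem (Berry--Esseen) established earlier, applied to the $(d+1)$-dimensional centred random vector $X=(|v|^2-d,v)$ with $v\sim f$; here the standing normalisation $\int v\,f=0$, $\int|v|^2 f=d$ is used, without which the conditioning is inconsistent with $\SS^N_\BB$. Consider $T_N\colon\R^{dN}\to\R^{d+1}$, $T_N(V)=\big(\sum_i(|v_i|^2-d),\,\sum_i v_i\big)$, so that $\SS^N_\BB=T_N^{-1}(0)$. By the co-area formula $F^N$ is the probability on $\SS^N_\BB$ with density proportional to $\prod_i f(v_i)/JT_N(V)$ with respect to the Riemannian volume $\sigma_{\SS^N_\BB}$, where $JT_N=\sqrt{\det(DT_N\,DT_N^{\ast})}$, and the normalising constant equals $\varrho_N(0)$, the value at $0\in\R^{d+1}$ of the density $\varrho_N$ of $\sum_{i=1}^N X_i$ ($X_i$ i.i.d.\ $\sim X$), which exists and is positive for $N\ge2$ because $f$ has a density and $0$ lies in the interior of the support of $T_{N\#}f^{\otimes N}$. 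The key elementary observation is that on $\SS^N_\BB$ the rows of $DT_N$ are pairwise orthogonal — the energy gradient $2V$ is orthogonal to each momentum gradient $(e_k,\dots,e_k)$ precisely because $\sum_i v_i=0$ — so $JT_N\equiv 2\sqrt d\,N^{(d+1)/2}$ is constant on $\SS^N_\BB$. Two consequences: since $|V|^2\equiv dN$ there, the restriction of $\gamma^{\otimes N}$ to $\SS^N_\BB$ is exactly the uniform measure $\gamma^N$; and, integrating out $N-\ell$ variables,
\[
F^N_\ell(V_\ell)=f^{\otimes\ell}(V_\ell)\,\Psi^{(\ell)}_N(V_\ell),\qquad
\Psi^{(\ell)}_N(V_\ell)=\frac{\varrho_{N-\ell}\!\big(-\textstyle\sum_{i=1}^\ell(|v_i|^2-d),\,-\sum_{i=1}^\ell v_i\big)}{\varrho_N(0)}
\]
for $N-\ell\ge2$, while comparing the two normalisations gives the pointwise identity $\dfrac{dF^N}{d\gamma^N}=\dfrac{\varrho^\gamma_N(0)}{\varrho_N(0)}\prod_i\dfrac f\gamma(v_i)$, with $\varrho^\gamma_N$ the Gaussian analogue of $\varrho_N$.

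For part (i) I would invoke the quantitative local CLT: since $X$ has mean $0$, non-degenerate covariance $\Sigma$ (non-degenerate because a density charges no quadric), finite third moment — this is exactly $f\in\PPP_6$, as $\int\Abs{|v|^2-d}^3 f<\infty\iff\int|v|^6 f<\infty$ — and, after finitely many convolutions, a bounded density (this is where $f\in L^p$, $p>1$, enters), one obtains for all large $N$ that $\varrho_N(0)=c_f N^{-(d+1)/2}(1+O(N^{-1/2}))$ with $c_f=(2\pi)^{-(d+1)/2}(\det\Sigma)^{-1/2}>0$, a uniform bound $\|\varrho_k\|_\infty\le Ck^{-(d+1)/2}$, and hence
\[
\big|\Psi^{(\ell)}_N(V_\ell)-1\big|\le C_\ell\,\frac{(1+|V_\ell|^2)^2}{\sqrt N}\ \text{ on }\{|V_\ell|^2\le\eta\sqrt N\},\qquad 0\le\Psi^{(\ell)}_N\le C_\ell\ \text{ everywhere}.
\]
Then, using $W_1(\mu,\nu)\le\int|x|\,d|\mu-\nu|(x)$ (Kantorovich duality), splitting $\R^{d\ell}$ at $|V_\ell|^2=\eta\sqrt N$, estimating the bulk by $\tfrac{C_\ell}{\sqrt N}\int|V_\ell|(1+|V_\ell|^2)^2 f^{\otimes\ell}$ and the tail by $C_\ell\int_{|V_\ell|^2>\eta\sqrt N}|V_\ell|\,f^{\otimes\ell}\le\tfrac{C_\ell}{\sqrt N}\int|V_\ell|^3 f^{\otimes\ell}$, and noting $\int|V_\ell|^5 f^{\otimes\ell}\le C_\ell\int|v|^5 f<\infty$ by $f\in\PPP_6$, we get $W_1(F^N_\ell,f^{\otimes\ell})\le C(\ell)/\sqrt N$ for $N$ large in terms of $\ell,p$; the finitely many remaining $N\ge\ell+1$ (including $N=\ell+1$, where the marginal is supported on a quadric and the density formula does not apply) are harmless since $\SS^N_\BB$ is bounded, so $F^N$ is compactly supported and $W_1(F^N_\ell,f^{\otimes\ell})<\infty$.

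For part (ii) I would start from the identity, valid for $N\ge2$ by the density formula and symmetry of $F^N$,
\[
\frac1N H(F^N\,|\,\gamma^N)=\frac1N\log\frac{\varrho^\gamma_N(0)}{\varrho_N(0)}+\int_{\R^d}\log\frac f\gamma\,dF^N_1.
\]
The first term is $O(1/N)$ by the local CLT applied to both $\varrho_N(0)$ and $\varrho^\gamma_N(0)$. For the second I would write it as $H(f|\gamma)+\int\log\frac f\gamma\cdot f\cdot(\Psi^{(1)}_N-1)$ and bound the last integral by the same bulk/tail split, now with the unbounded weight $\log\frac f\gamma=\log f+\tfrac12|v|^2+\tfrac d2\log(2\pi)$: one checks $\int_{\R^d}f\,\big|\log\tfrac f\gamma\big|(1+|v|^2)^2\,dv<\infty$, the contribution of $\tfrac12|v|^2$ being finite exactly because $f\in\PPP_6$, and the contribution of $\log f$ being controlled on $\{f\ge1\}$ by $\int f^{1+\delta}(1+|v|^2)^2<\infty$ (from $f\in L^p$ and moments) and on $\{f<1\}$ by $\int f^{1-\delta}(1+|v|^2)^2<\infty$ (from $f\in\PPP_6$, $\delta$ small), via $t\log_\pm t\lesssim_\delta t^{1\pm\delta}$; then the bulk contributes $\le\tfrac C{\sqrt N}\int f\big|\log\tfrac f\gamma\big|(1+|v|^2)^2$ and the tail $\le C\int_{|v|^2>\eta\sqrt N}f\big|\log\tfrac f\gamma\big|\le\tfrac C{\sqrt N}\int f\big|\log\tfrac f\gamma\big||v|^2$, both $O(1/\sqrt N)$. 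This yields $\big|\tfrac1N H(F^N|\gamma^N)-H(f|\gamma)\big|\le C/\sqrt N$; in particular $H(f|\gamma)<\infty$, and since the Kac-chaos requirement in Definition~\ref{def:ent-chaos} is (i), $F^N$ is entropically $f$-chaotic.

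The main obstacle is the last step of each part: converting the density-level information from the local CLT into, respectively, a $W_1$ estimate and an estimate for $\int\log\frac f\gamma\,(dF^N_1-df)$ with the sharp rate $N^{-1/2}$. The weight $\log\frac f\gamma$ is unbounded and $F^N_1-f$ is not uniformly small on $\R^d$, so one must combine the pointwise Berry--Esseen ratio bound on a slowly growing ball with tail estimates, and the hypotheses $f\in\PPP_6\cap L^p$ are precisely calibrated to make the resulting integrals finite (the sixth moment is needed both for the third-moment Berry--Esseen rate and for the weight $\int f|v|^2(1+|v|^2)^2$). A secondary point, if the local CLT of the preceding section is stated only in integrated form, is upgrading it to the pointwise comparison of $\varrho_{N-\ell}(\cdot)$ with $\varrho_N(0)$ used above, which one does by Taylor-expanding the limiting Gaussian near $0$ on $\{|V_\ell|^2\lesssim\sqrt N\}$.
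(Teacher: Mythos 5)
Your proposal is correct and follows the same underlying mechanism as the paper: a local CLT (Berry--Esseen) for the $(d+1)$-dimensional random vector $\bigl(\lvert v\rvert^2-d,\,v\bigr)$, $v\sim f$, whose $N$-fold convolution density $\varrho_N$ evaluated at shifted points governs the ratio $F^N_\ell/f^{\otimes\ell}$, followed by a bulk/tail split against a slowly growing ball $\{\lvert V_\ell\rvert^2\lesssim\sqrt N\}$ using $f\in\PPP_6\cap L^p$ to control moments, densities, and the $\log(f/\gamma)$ weight in the entropy term. The packaging, however, is genuinely different and cleaner than the paper's: where the paper derives the marginal $F^N_\ell=f^{\otimes\ell}\theta^N_1\theta^N_2$ by combining the explicit Fubini-like formula for $\gamma^N_\ell$ (Lemma~\ref{lem:gammaNl}) with a separately proved asymptotic expansion of the partition function $Z'_N(f;r,z)$ (Theorem~\ref{thm:ZN}), you obtain $F^N_\ell=f^{\otimes\ell}\,\varrho_{N-\ell}(-T_\ell(V_\ell))/\varrho_N(0)$ in one step from the co-area formula and the observation that $JT_N$ is constant on $\SS^N_\BB$ (the energy gradient $2V$ being orthogonal to the momentum gradients precisely on $\{\sum_i v_i=0\}$), which also explains transparently why $\gamma^{\otimes N}$ restricts to the uniform measure $\gamma^N$. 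This buys conceptual economy and sidesteps the explicit $\Gamma$-function and Stirling manipulations; moreover, keeping a non-degenerate rather than identity covariance $\Sigma$ is slightly more robust, since the paper's application of Theorem~\ref{thm:clt} to the renormalised $g$ implicitly uses $\int v\,\lvert v\rvert^2 f\,dv=0$ to diagonalise the covariance, a condition not listed in \eqref{eq:hyp-f}. What the paper's route buys in exchange is fully explicit constants in $\theta^N_1,\theta^N_2$, which it then reuses in Sections~\ref{sec:entropy} and~\ref{sec:appli} (e.g.\ in the proof of Theorem~\ref{thm:ent-chaos-general} via \eqref{eq:IFNbound}), so the explicit form is not wasted effort.
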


Let us now define the relative Fisher's information of a probability measure $\mu\in\PPP(\R^d)$ with respect to $\gamma$ by
\begin{equation}\label{eq:def-fisher-1}
\begin{aligned}
I(\mu|\gamma) := \int_{\R^d} \left|\nabla \log \frac{d\mu}{d\gamma}\right|^2\, d\mu,
\end{aligned}
\end{equation}
and, as we did for entropy, we also define for $G^N\in\PPP(\SS^N_\BB)$ the relative Fisher's information with respect to $\gamma^N$ by
\begin{equation}\label{eq:def-Fisher}
\begin{aligned}
I(G^N | \gamma^N) 
&:=  \int_{\SS^N_\BB} \left|\nabla_{\SS} \log \frac{dG^N}{d\gamma^N}   \right|^2 \, dG^N,
\end{aligned}
\end{equation}
where $\nabla_{\SS}$ stands for the gradient on the Boltzmann's sphere, i.e. the  component of the usual gradient in $\R^{dN}$ that is tangent to the sphere $\SS^N_\BB$.

We define then another stronger notion of chaos, the Fisher's information chaos, in an analogous way of Definition~\ref{def:ent-chaos}.

\begin{definition}[Fisher's information chaos]\label{def:Fisher-chaos}
We say that the sequence $G^N \in \PPP(\SS^N_\BB)$ is Fisher's information $f$-chaotic, for some $f\in\PPP(\R^d)$, if $G^N$ is $f$-chaotic in Kac's sense (Definition \ref{def:chaos}) and
$$
\lim_{N\to\infty}\frac{1}{N} \, I(G^N | \gamma^N) = I(f|\gamma)
$$
with $I(f|\gamma)<\infty$.
\end{definition}

\begin{rem}
The Fisher's information chaos is introduced in \cite{HaurayMischler} in a weaker way, which is in fact equivalent to Definition~\ref{def:Fisher-chaos} thanks to Theorem~\ref{thm:intro-GN}.
\end{rem}

Next, we may compare as follows the several notions of chaos: 
\begin{thm}\label{thm:intro-GN}
Consider $G^N\in\PPP(\SS^N_\BB)$, with $k$-th order moment $M_k(G^N_1)$ bounded, for some $k\ge 6$, and suppose that $G^N_1 \wto f$ in $\PPP(\R^d)$. 

Then, each assertion listed below implies the further one:
\begin{enumerate}[(i)]

\item $N^{-1}I(G^N|\gamma^N) \to I(f|\gamma)$, with $I(f|\gamma)<\infty$.

\item $N^{-1}I(G^N | \gamma^N )$ is bounded and $G^N$ is $f$-chaotic in Kac's sense.

\item $N^{-1}H(G^N|\gamma^N) \to H(f|\gamma)$, with $H(f|\gamma)<\infty$. 

\item $G^N$ is $f$-chaotic in Kac's sense.

\end{enumerate}

\end{thm}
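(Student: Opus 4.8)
The four assertions are connected by two mechanisms: a compactness argument that recovers Kac's chaos from an analytic convergence, and the HWI inequality on the Boltzmann sphere. The geometric starting point is that $\SS^N_\BB$ is, up to translation, the round sphere of radius $\sqrt{dN}$ and dimension $d(N-1)-1$, so $(\SS^N_\BB,\gamma^N)$ obeys a Bakry--\'Emery curvature bound with constant $K_N=(d(N-1)-2)/(dN)$, and $K_N\to1$; hence $K_N\ge\kappa>0$ for $N$ large, and the logarithmic Sobolev, Talagrand and (one- and two-measure) HWI inequalities all hold on $(\SS^N_\BB,\gamma^N)$ with constant $K_N$, uniformly in $N$. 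I will also use, on the Boltzmann sphere, the superadditivity of the relative entropy and of the relative Fisher information in the form $\tfrac1N\Phi(G^N|\gamma^N)\ge\tfrac1\ell\Phi(G^N_\ell|\gamma^N_\ell)+o_N(1)$ (for $\Phi\in\{H,I\}$ and each fixed $\ell$) and the convergence $\gamma^N_\ell\to\gamma^{\otimes\ell}$ of the marginals of the uniform measure; these are adaptations of facts established on Kac's sphere in \cite{CCLLV} and rely on local Central Limit Theorem estimates of the kind needed for Theorem~\ref{thm:intro-FN}.

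The implications $(i)\Rightarrow(ii)$ and $(iii)\Rightarrow(iv)$ are proved identically, writing $\Phi$ for $I$, respectively $H$. Superadditivity with $\ell=2$ and the hypothesis $\tfrac1N\Phi(G^N|\gamma^N)\to\Phi(f|\gamma)<\infty$ give $\limsup_N\Phi(G^N_2|\gamma^N_2)\le2\Phi(f|\gamma)$; with the moment bound this makes $(G^N_2)_N$ tight, and any weak limit $g_2$ has both marginals $f$ (since $G^N_1\wto f$) and, by lower semicontinuity together with $\gamma^N_2\to\gamma^{\otimes2}$, satisfies $\Phi(g_2|\gamma^{\otimes2})\le2\Phi(f|\gamma)$. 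On the other hand, every probability measure on $\R^{2d}$ with both marginals equal to $f$ satisfies $\Phi(g_2|\gamma^{\otimes2})\ge2\Phi(f|\gamma)$, with equality only for $g_2=f^{\otimes2}$: for $H$ this follows from the identity $H(g_2|\gamma^{\otimes2})=H(g_2|f^{\otimes2})+2H(f|\gamma)$ and the fact that $H(g_2|f^{\otimes2})=0$ only when $g_2=f^{\otimes2}$, and for $I$ from Jensen's inequality applied to $\nabla_{x_1}\log\tfrac{dg_2}{d\gamma^{\otimes2}}$ under the conditional law of $x_2$ given $x_1$ (and symmetrically). Hence $g_2=f^{\otimes2}$, so $G^N_2\wto f^{\otimes2}$, which is Kac's chaos; in $(i)\Rightarrow(ii)$ the boundedness of $\tfrac1N I(G^N|\gamma^N)$ is then immediate.

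The heart of the matter is $(ii)\Rightarrow(iii)$. The lower bound is soft: superadditivity of $H$, the Kac chaos $G^N_\ell\wto f^{\otimes\ell}$, the convergence $\gamma^N_\ell\to\gamma^{\otimes\ell}$ and lower semicontinuity of relative entropy give $\liminf_N\tfrac1N H(G^N|\gamma^N)\ge H(f|\gamma)$, and the logarithmic Sobolev inequality on $\SS^N_\BB$ gives $\tfrac1N H(G^N|\gamma^N)\le\tfrac1{2K_N}\tfrac1N I(G^N|\gamma^N)$, which is bounded by hypothesis, so in particular $H(f|\gamma)<\infty$. For the upper bound I would apply the two-measure HWI inequality on $(\SS^N_\BB,\gamma^N)$ against the explicit conditioned sequence $F^N=[f^{\otimes N}]_{\SS^N_\BB}$ of Theorem~\ref{thm:intro-FN},
\[
H(G^N|\gamma^N)\le H(F^N|\gamma^N)+W_2(G^N,F^N)\sqrt{I(G^N|\gamma^N)}-\frac{K_N}{2}W_2(G^N,F^N)^2,
\]
divide by $N$, and use $\tfrac1N H(F^N|\gamma^N)\to H(f|\gamma)$ from Theorem~\ref{thm:intro-FN}(ii) and the boundedness of $\tfrac1N I(G^N|\gamma^N)$; everything then reduces to showing that $\tfrac1N W_2(G^N,F^N)^2\to0$ (the geodesic and chordal Wasserstein distances on $\SS^N_\BB$ being comparable, this can be done with the Euclidean distance inherited from $\R^{dN}$). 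For that I would couple through empirical measures. Since $G^N$ is $f$-chaotic with $M_k(G^N_1)$ bounded for some $k\ge6$, one has $\E_{G^N}[W_2^2(\hat\mu_V,f)]\to0$ with $\hat\mu_V:=\tfrac1N\sum_i\delta_{v_i}$, and likewise $\E_{F^N}[W_2^2(\hat\mu_W,f)]\to0$ by Theorem~\ref{thm:intro-FN}(i) and the moment bounds on $F^N$. Since $G^N$ and $F^N$ are absolutely continuous and symmetric, the conditional law of a configuration given its empirical measure is uniform over the reorderings of that measure; so, drawing the two empirical measures independently, fixing the optimal matching between their atoms, and only then drawing the common uniform ordering, yields a coupling $\pi$ of $G^N$ and $F^N$ with $\E_\pi|V-W|^2=N\,\E[W_2^2(\hat\mu_V,\hat\mu_W)]\le2N\bigl(\E_{G^N}W_2^2(\hat\mu_V,f)+\E_{F^N}W_2^2(\hat\mu_W,f)\bigr)=o(N)$. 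Thus $\tfrac1N W_2(G^N,F^N)^2\to0$, so $\limsup_N\tfrac1N H(G^N|\gamma^N)\le H(f|\gamma)$, and with the lower bound $(ii)\Rightarrow(iii)$ follows.

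I expect the main obstacle to be exactly this upper bound, and within it the vanishing of $\tfrac1N W_2(G^N,F^N)^2$. Applying HWI against $\gamma^N$ alone would only give $\limsup\tfrac1N H(G^N|\gamma^N)\le H(f|\gamma)+\mathrm{const}$, because such an inequality cannot separate the Fisher information carried by the one-particle marginal (which does raise the entropy) from that carried by the correlations (which must not). Comparing against $F^N$, which already carries the correct per-particle entropy $H(f|\gamma)$, routes the merely bounded normalized Fisher information into the transport term; one must then prove honestly — through the quantitative chaos of $F^N$ and the symmetrized coupling above — that this transport term disappears, which is the delicate point of the argument.
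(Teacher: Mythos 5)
Your proof is correct and follows essentially the same route as the paper's: the soft implications $(i)\Rightarrow(ii)$ and $(iii)\Rightarrow(iv)$ are recovered exactly as in the paper by tightness of the second marginals, superadditivity of relative entropy/Fisher information on $\SS^N_\BB$, lower semicontinuity along $\gamma^N_\ell\to\gamma^{\otimes\ell}$, and the equality analysis ($z\log z-z+1=0$ only at $z=1$, respectively the Jensen equality case) forcing $\pi_\ell=f^{\otimes\ell}$, while $(ii)\Rightarrow(iii)$ rests, as in the paper's Theorem~\ref{thm:ent-chaos-general}, on an HWI inequality on $\SS^N_\BB$ comparing $G^N$ to $F^N=[f^{\otimes N}]_{\SS^N_\BB}$ and on showing $W_2(G^N,F^N)^2/N\to0$ through empirical measures. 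The only stylistic deviations are that you use HWI in one direction together with a separate semicontinuity lower bound where the paper applies the two-sided HWI, and you build the $W_2$ estimate by a hands-on symmetrized coupling where the paper invokes the quantitative Wasserstein lemmas of Hauray--Mischler (Lemma~\ref{lem:W2eqW1}), which together with the rates of Theorem~\ref{thm:intro-FN} is also what lets the paper state $(ii)\Rightarrow(iii)$ with a rate, a refinement your qualitative coupling argument does not deliver.
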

As a consequence, in Definition \ref{def:ent-chaos} of the entropic chaos and in Definition~\ref{def:Fisher-chaos} of Fisher's information chaos, we only need the convergence of the first marginal, i.e. $G^N_1 \wto f$, instead of the convergence of all marginals. Hence, this theorem asserts that Fisher's information chaos implies entropic chaos, which in turns implies chaos (or Kac's chaos). Furthermore, we prove a quantitative rate for the implication $(ii) \Rightarrow (iii)$.

\medskip
Another main result of the paper is a possible answer to \cite[Open Problem 11]{CCLLV} in the setting of Boltzmann's sphere given in Theorem~\ref{thm:intro-op}. First of all, let us state the problem.
For $G^N \in \PPP(\SS^N_\BB)$ and $f \in \PPP_6(\R^d) \cap L^p (\R^d)$ with $p>1$, consider the following two conditions:
\beqn\label{eq:cond1}
\lim_{N \to \infty} \frac1N \, H(G^N | [f^{\otimes N}]_{\SS^N_\BB}) = 0,
\eeqn
and
\beqn\label{eq:cond2}
\forall\, \ell \in \N, \qquad 
\lim_{N \to \infty}  H(G^N_\ell | f^{\otimes \ell}) = 0,
\eeqn
where $[f^{\otimes N}]_{\SS^N_\BB}$ is the probability measure constructed in Theorem~\ref{thm:intro-FN}. In the Kac's sphere setting (i.e.\ $\Sp^{N-1}(\sqrt N)$ instead of $\SS^N_\BB$),
\cite{CCLLV} proved that condition \eqref{eq:cond2} holds when $G^N$ is the conditioned tensor product $G^N = [f^{\otimes N}]_{\SS^N_\BB}$. As discussed in \cite{CCLLV}, conditions \eqref{eq:condH}, \eqref{eq:cond1} and \eqref{eq:cond2} really mean that $G^N$ is "strongly" close to $f^{\otimes N}$, not only in the weak measure sense for marginals as in Kac's chaos.
In view of this, they formulated the following problem.

\begin{pb}[{\cite[Open Problem 11]{CCLLV}}]\label{pb}
Does condition~\eqref{eq:cond1} imply condition \eqref{eq:cond2} ? More generally, does condition \eqref{eq:cond2} hold for a larger and easily recognized class of chaotic sequences, larger than those contructed by means of conditioning tensor products ?
\end{pb}

We give a partial answer to Problem~\ref{pb} in the following theorem.

\begin{thm}\label{thm:intro-op}
Consider $G^N \in \PPP(\SS^N_\BB)$ such that $G^N$ is $f$-chaotic, for some $f\in\PPP(\R^d)$, and suppose that
$$
M_k(G^N_1) \le C, \; k> 2,\qquad 
\frac{1}{N} \, I(G^N|\gamma^N) \le C.
$$
Suppose further that $f\in L^{\infty}(\R^d)$ and $f(v_1)\ge \exp(-a|v_1|^2)$ for some constant $a>0$. Then for any fixed $\ell$, there exists a constant $C = C(d,\ell,\norm{f}_{L^{\infty}},M_k(G^N_1,f)) >0$ such that for all $N\ge \ell + 1$ we have
$$
H(G^N_\ell | f^{\otimes \ell}) \le C\, W_1(G^N_\ell,f^{\otimes \ell})^{\theta(\ell,d,k)}
,
$$
where $\theta(\ell,d,k)$ is constructive and depends on $\ell$, $d$ and $k$. As a consequence, $H(G^N_\ell | f^{\otimes \ell}) \to 0$ as $N\to \infty$ and condition~\eqref{eq:cond2} holds.
\end{thm}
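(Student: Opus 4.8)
The plan is to reduce Theorem~\ref{thm:intro-op} to a self-contained comparison inequality on $\R^{d\ell}$ between relative entropy and $L^1$ distance, and then to control the $L^1$ distance by $W_1$ using the regularity coming from the Fisher information bound. Write $n:=d\ell$, $g:=G^N_\ell\in\PPP(\R^n)$, $h:=f^{\otimes\ell}\in\PPP(\R^n)$, and first I would record the bounds on $g,h$ that are uniform in $N\ge\ell+1$. From $M_k(G^N_1)\le C$ and symmetry, $M_k(g)\le\ell^{k/2}C$, and likewise $M_k(h)\le\ell^{k/2}M_k(f)$ with $M_k(f)<\infty$. From the sub-additivity of the Fisher information on the Boltzmann's sphere (the analogue for $\SS^N_\BB$ of the corresponding property on Kac's sphere in \cite{HaurayMischler}, developed as part of the proof of Theorem~\ref{thm:intro-GN}) together with $\tfrac1N I(G^N|\gamma^N)\le C$, one gets $I(g|\gamma^{\otimes\ell})\le C(d,\ell)$; with the moment bound this yields $I(g)\le 2I(g|\gamma^{\otimes\ell})+2M_2(g)\le C(d,\ell)$, so $\sqrt g$ is bounded in $H^1(\R^n)$, whence by Sobolev embedding $\norm{g}_{L^{p_1}}\le C$ for some $p_1=p_1(n)>1$, and $\norm{\nabla g}_{L^1}=\int 2\sqrt{g}\,|\nabla\sqrt{g}|\le\sqrt{I(g)}\le C$. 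Finally, applying the same marginal bound with $\ell=1$ and the weak lower semicontinuity of the Fisher information along $G^N_1\wto f$ gives $I(f|\gamma)<\infty$, hence $\sqrt f\in H^1(\R^d)$ and $\norm{\nabla f}_{L^1}\le\sqrt{I(f)}<\infty$, so $h=f^{\otimes\ell}\in W^{1,1}(\R^n)\cap L^\infty(\R^n)$ with $\norm{h}_{L^\infty}\le\norm{f}_{L^\infty}^\ell$, $\norm{\nabla h}_{L^1}\le\ell\norm{\nabla f}_{L^1}$, and $h\ge\exp(-a|x|^2)$ on $\R^n$.

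Second, I would estimate the relative entropy. With $\phi(t):=t\log t-t+1\ge0$ and $\int(g-h)=0$,
$$
H(g|h)=\int_{\R^n}h\,\phi(g/h)=\int_{\{g\le 2h\}}h\,\phi(g/h)+\int_{\{g>2h\}}h\,\phi(g/h).
$$
On $\{g\le 2h\}$ one has the elementary pointwise bound $h\,\phi(g/h)\le 2|g-h|$ (from $\phi(t)\le(t-1)^2$ on $[0,2]$ and $|g-h|\le h$ on $\{h/2\le g\le 2h\}$, and from $\phi\le1$ on $[0,1]$ and $h<2|g-h|$ on $\{g<h/2\}$), so this term is $\le 2\norm{g-h}_{L^1}$. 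On $\{g>2h\}$ one uses $h\,\phi(g/h)\le g\log(g/h)\le g(\log g)_+ + a\,g\,|x|^2$ (the lower bound $h\ge\exp(-a|x|^2)$ entering the last step) together with $g<2(g-h)$ on $\{g>2h\}$, so that $\int_{\{g>2h\}}g\le\norm{g-h}_{L^1}$; splitting according to the size of $g$ and using the $L^{p_1}$ bound (via $g(\log g)_+\le C_\eta g^{1+\eta}$ and interpolation of $L^p$ norms, with $\eta$ fixed in terms of $p_1$) gives $\int_{\{g>2h\}}g(\log g)_+\le C\norm{g-h}_{L^1}^{1/2}$, while splitting according to $|x|\le R$ or $|x|>R$ and using the moment bound gives $\int_{\{g>2h\}}g|x|^2\le 2\int(g-h)_+|x|^2\le 2\bigl(R^2\norm{g-h}_{L^1}+R^{2-k}M_k(g)\bigr)$, hence $\le C\norm{g-h}_{L^1}^{(k-2)/k}$ after optimising in $R$. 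Thus, for $\norm{g-h}_{L^1}\le 1$,
$$
H(g|h)\le C\,\norm{g-h}_{L^1}^{\theta_1},\qquad\theta_1=\theta_1(d,\ell,k):=\min\bigl\{\tfrac12,\tfrac{k-2}{k}\bigr\}>0,
$$
and the same decomposition also gives the crude uniform bound $H(g|h)\le C_0$, which is all that is needed when $\norm{g-h}_{L^1}>1$.

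Third, I would bound $\norm{g-h}_{L^1}$ by a power of $W_1(g,h)$, which is where the two-sided $W^{1,1}$ regularity is used. Let $\rho_\delta$ be a standard mollifier at scale $\delta\le1$ and $u_\delta:=u*\rho_\delta$; then $\norm{u-u_\delta}_{L^1}\le C\delta\norm{\nabla u}_{L^1}$ for $u\in\{g,h\}$, $\norm{\nabla(g_\delta-h_\delta)}_{L^\infty}\le C\delta^{-n-1}$, $M_k(g_\delta),M_k(h_\delta)\le C$, and $W_1(g_\delta,h_\delta)\le W_1(g,h)$. Testing $g_\delta-h_\delta$ against the $1/\eta$-Lipschitz clipped-linear function of $g_\delta-h_\delta$ (Kantorovich--Rubinstein duality) and estimating the set $\{0<|g_\delta-h_\delta|<\eta\}$ by $\eta|B_R|+R^{-k}C$, then optimising successively in $\eta$ and in $\delta$, yields
$$
\norm{g-h}_{L^1}\le C\,W_1(g,h)^{\theta'},\qquad\theta'=\theta'(d,\ell,k):=\frac{k}{d\ell+(d\ell+3)k}>0.
$$
Combining the last two displays, and handling the remaining range $W_1(g,h)>\varepsilon_0$ (which is bounded, since $W_1(g,h)\le M_1(g)+M_1(h)\le C$) by the crude bound $H(g|h)\le C_0$, gives $H(G^N_\ell|f^{\otimes\ell})\le C\,W_1(G^N_\ell,f^{\otimes\ell})^{\theta}$ with $\theta(d,\ell,k):=\theta_1(d,\ell,k)\,\theta'(d,\ell,k)$, which is constructive.

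The delicate point is the passage from $W_1$ to the $L^1$ distance, and in particular the need to avoid estimating the ``cross term'' of the relative entropy by brute Lipschitz duality against $\log h$: on $\{|x|\le R\}$ the Lipschitz constant of $\log h$ is only $\norm{\nabla h}_{L^\infty}e^{aR^2}$, so such an estimate would produce merely a logarithmic modulus rather than a power. The decomposition above circumvents this by trading $\log h$ for the polynomial weight $|x|^2$ (through $h\ge e^{-a|x|^2}$) and by isolating the genuinely small set $\{g>2h\}$; this is precisely where the hypotheses $f\in W^{1,\infty}$, $f\ge\exp(-a|\cdot|^2)$ and $\tfrac1N I(G^N|\gamma^N)\le C$ come into play. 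Securing the quantitative $W^{1,1}$ bound on $g=G^N_\ell$ through the sub-additivity of the Fisher information on $\SS^N_\BB$ is the other point that requires care.
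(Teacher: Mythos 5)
Your proof is correct in substance and reaches the claimed conclusion, but it takes a genuinely different route from the paper's. The paper decomposes
\[
H(G^N_\ell\,|\,f^{\otimes\ell})=\Big[H(G^N_\ell\,|\,\gamma^{\otimes\ell})-H(f^{\otimes\ell}\,|\,\gamma^{\otimes\ell})\Big]+\int (G^N_\ell-f^{\otimes\ell})\log\gamma^{\otimes\ell}+\int(f^{\otimes\ell}-G^N_\ell)\log f^{\otimes\ell}
\]
and controls the first bracket by the HWI inequality on $\R^{d\ell}$, $T_1\le\sqrt{I(G^N_\ell|\gamma^{\otimes\ell})}\,W_2(G^N_\ell,f^{\otimes\ell})$, with $W_2$ then compared to $W_1$ via moment interpolation; the remaining two integrals are handled by truncation and mollification directly against $W_1$. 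You instead prove two free-standing estimates on $\R^n$, $n=d\ell$: first $H(g|h)\lesssim\norm{g-h}_{L^1}^{\theta_1}$ by splitting $\{g\le2h\}$ (giving the $L^1$ term) and $\{g>2h\}$ (where $h\,\phi(g/h)\le g(\log g)_+ + a\,g|x|^2$, handled by the $L^{p_1}$ bound and the $k$-th moment), and second $\norm{g-h}_{L^1}\lesssim W_1(g,h)^{\theta'}$ by mollification, clipping, and Kantorovich--Rubinstein duality, using the uniform $W^{1,1}$ and moment bounds on $g$ and $h$. Your Step~1 (deriving $I(G^N_\ell|\gamma^{\otimes\ell})\le C$ from $N^{-1}I(G^N|\gamma^N)\le C$ via sub-additivity of Fisher information on $\SS^N_\BB$, and deducing $\sqrt{g}\in H^1$, $g\in L^{p_1}$, $g\in W^{1,1}$) is essentially the same groundwork as the paper's Step~1. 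The comparison: the paper's use of HWI exploits the $I$-bound more directly and produces a somewhat tighter exponent $\theta=k[d(\ell+k+2)+2k+4]^{-1}$; your route is more elementary (no HWI, only moments, Sobolev, and KR duality) but factors through the $L^1$ distance, which costs an extra passage $W_1\to L^1$ and yields a smaller exponent. Since the theorem only asks for a constructive exponent $\theta(\ell,d,k)>0$, both are admissible. Two cosmetic remarks: you drop a factor of $2$ in $\int_{\{g>2h\}}g\le2\norm{g-h}_{L^1}$, and the exact value of $\theta'$ you state does not match the balance of the four error terms $\delta^{-n-1}\eta^{-1}W_1$, $\eta R^n$, $R^{-k}$, $\delta$ (one gets $\theta'=k/(k(n+1)-n)$, or $k/(n(k-1))$ using the sharper $\norm{\nabla g_\delta}_{L^\infty}\le\norm{\nabla g}_{L^1}\norm{\rho_\delta}_{L^\infty}\lesssim\delta^{-n}$); neither affects the conclusion.
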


This theorem exhibes a class of chaotic sequences in the Boltzmann's sphere that satisfy condition \eqref{eq:cond2}. At a first sight, the hypotheses needed on $G^N$ and $f$ to \eqref{eq:cond2} be true may seen stronger than the conditioned tensor product, in which case \cite{CCLLV} proved that \eqref{eq:cond2} holds (as said above). However, as remarked in \cite{CCLLV,HaurayMischler}, the conditioned tensor product assumption is not propagated along time by the Boltzmann equation but the assumptions needed in Theorem~\ref{thm:intro-op} may be. It is indeed true for the Boltzmann equation with Maxwellian molecules (see point $(iv)$ of Theorem~\ref{thm:intro-PropChaos} below for a precise statement), hence, in this setting, the assumptions in Theorem~\ref{thm:intro-op} are natural, which gives a satisfying answer to the second question on Problem~\ref{pb} in the Maxwellian case.

The interest here is that, as already remarked in \cite{CCLLV,MMchaos,HaurayMischler}, a natural step on Kac's program would be to study the propagation of 
conditions \eqref{eq:condH} or \eqref{eq:cond1} or \eqref{eq:cond2} (which are stronger than Kac's chaos) under the master equation \eqref{eq:master}. As explained above, as a consequence of Theorem~\ref{thm:intro-op}, the propagation of \eqref{eq:cond2} holds true for Maxwellian molecules. We continue the investigation of these issues in Theorem~\ref{thm:intro-PropChaos} below, proving also the propagation of entropic chaos \eqref{eq:condH} and \eqref{eq:cond1}.

\bigskip
We can apply our previous results to the Boltzmann equation for Maxwellian molecules. Some of the results concern assumption \eqref{eq:Bmaxwell}, i.e.\ Maxwellian molecules with and without cutoff, others concern only the Grad's cutoff Maxwellian molecules \eqref{eq:BmaxwellG}. Thanks to the work on propagation of chaos of \cite{MMchaos}, we can establish the following theorem.

%

\begin{thm}\label{thm:intro-PropChaos}
Let $f_0\in \PPP(\R^d)$ and $G^N_0\in \PPP(\SS^N_\BB)$. Consider then, for all $t>0$, the solution $G^N_t$ of the Boltzmann master equation \eqref{eq:master} with Maxellian molecules (\eqref{eq:Bmaxwell} or \eqref{eq:BmaxwellG}) associated to the initial condition $G^N_0$, and the solution $f_t$ of the limiting Boltzmann equation \eqref{eq:Boltzmann} with Maxellian molecules (\eqref{eq:Bmaxwell} or \eqref{eq:BmaxwellG}) associated to the initial data $f_0$.

Then we have
\begin{enumerate}[(i)]

\item Let \eqref{eq:BmaxwellG} be in force. Consider $f_0\in \PPP_6\cap L^p(\R^d)$ for $p>1$. If $G^N_0$ is entropically $f_0$-chaotic, then for all $t>0$, $G^N_t$ is entropically $f_t$-chaotic, more precisely
$$
\lim_{N\to\infty} \frac1N \, H(G^N_t | \gamma^N) = H(f_t|\gamma).
$$

\item Let \eqref{eq:Bmaxwell} be in force. Consider $f_0\in \PPP_6(\R^d)$ with $I(f_0 | \gamma)<\infty$. If $G^N_0=[f^{\otimes N}_0]_{\SS^N_\BB} \in \PPP(\SS^N_\BB)$ as in Theorem~\ref{thm:intro-FN}, then, for all $t>0$, $G^N_t$ is entropically $f_t$-chaotic. More precisely, for any 
$$
\epsilon < \frac{48}{(7d+6)^2(5d+24)}
$$ 
there exists a constant $C:=C(\epsilon)>0$ such that
$$
\sup_{t\ge 0} \left| \frac{1}{N} \,  H(G^N_t | \gamma^N) - H(f_t|\gamma)\right| 
\le C N^{-\epsilon}.
$$

\item Let \eqref{eq:BmaxwellG} be in force. Consider $f_0\in  \PPP_6\cap L^{\infty}(\R^d)$ and $f_0(v_1) \ge \exp(-\alpha |v_1|^2 + \beta)$ for $\alpha>0$ and $\beta\in\R$. If $G^N_0$ satisfies condition \eqref{eq:cond1}
$$
\lim_{N\to\infty} \frac1N \, H(G^N_0 | [f^{\otimes N}_0]_{\SS^N_\BB}) = 0 ,
$$ 
then, for all $t>0$, $G^N_t$ also satisfies condition \eqref{eq:cond1}
$$
\lim_{N\to\infty} \frac1N \, H(G^N_t | [f^{\otimes N}_t]_{\SS^N_\BB}) = 0.
$$

\item Let \eqref{eq:BmaxwellG} be in force. Consider $f_0\in  \PPP_6\cap L^{\infty}(\R^d)$ and $f_0(v_1) \ge \exp(-\alpha |v_1|^2 + \beta)$ for $\alpha>0$, $\beta\in\R$.
Consider also $G^N_0$ that is $f_0$-chaotic and has $M_k (\Pi_1 (G^N_0))$ and $N^{-1} I(G^N_0 | \gamma^N)$ finite, for some $k>2$.

Then, for all $t\geq 0$, $G^N_t$ satisfies condition \eqref{eq:cond2}
$$
\forall\, \ell \in \N, \qquad 
\lim_{N \to \infty}  H(\Pi_\ell(G^N_t) | f_t^{\otimes \ell}) = 0.
$$

\end{enumerate}

\end{thm}

Theorem \ref{thm:intro-PropChaos} improves the results of \cite{MMchaos} where Kac's chaos is established with a rate but entropic chaos is proved without any rate. Indeed, point $(i)$ here is proved in \cite{MMchaos} and point $(ii)$ gives a quantitative propagation of entropic chaos. Moreover, point $(iii)$ answers a question of \cite[Remark 7.11]{MMchaos} and point $(iv)$ is a consequence of Theorem~\ref{thm:intro-op} as said above.

It is worth mentioning that point $(i)$ was proved in \cite{MMchaos} for both the Maxwellian molecules with cutoff \eqref{eq:BmaxwellG} and the hard spheres case (which corresponds to the collision kernel $B(z,\cos\theta) = |z|$). 
The proof of point $(iii)$ also shows that $(iii)$ is valid for hard spheres, indeed the proof is based on the fact that \eqref{eq:condH} and \eqref{eq:cond1} are equivalent under some hypotheses on $f$ (see Theorem~\ref{thm:comp-ent}) and these properties are also propagated along time in the hard spheres case 
(propagation of $L^\infty$, moments and lower Maxwellian bounds, see e.g.\ \cite{Villani-BoltzmannBook} and the references therein).
However, the results $(ii)$ and $(iv)$ are valid only for the Maxwellian case, the reason behind this is that a key ingredient of the proof is the propagation of the Fisher's information bound, and such property is only know to hold for Maxwellian molecules.

\subsection{Strategy}

We construct a probability on $\SS^N_\BB$ based on tensorization and conditioning of some probabilty measure on $\R^d$. To this purpose, we use an explicit formula for the marginals of the uniform probablity on $\SS^N_\BB$ and a version of the local Central Limit Theorem (also known as Berry-Esseen), which is the cornerstone of the proof.

In order to study more general probabilities on the Boltzmann's sphere, we use an interpolation-type inequality, relating entropy, Fisher's information and the $2$-MKW distance, called HWI inequality from \cite{OttoVillani,LottVillani,VillaniOTO&N}, to show that Kac chaotic probabilities with finite Fisher's information are entropically chaotic.

Finally, the application of our results to the Boltzmann equation is based on recent results of propagation of chaos from \cite{MMchaos} and on the relations of different notions of measuring chaos from the work \cite{HaurayMischler}.

\subsection{Previous works}
In \cite{Kac1956} it is proved that the $N$-fold tensorization of a smooth probability on $\R$ conditioned to the Kac's sphere, i.e. the usual sphere $\Sp^{N-1}(\sqrt N)$, is Kac chaotic. Then, the work \cite{CCLLV} extends this result to a more general class of probabilities on $\R$, introduces the notion of entropic chaos and also proves that the $N$-fold tensorization conditioned to the Kac's sphere is entropically chaotic. Furthermore, the recent work \cite{HaurayMischler} gives quantitative rates of the results before, introduces the notion of Fisher's information chaos and links these three notions of chaos.

\subsection{Organization of the paper}In Section~\ref{sec:unif} we shall study the uniform probability measure on $\SS^N_\BB$. In Section~\ref{sec:chaotic} we construct a chaotic distribution on Boltzmann's sphere based on a probability measure on $\R^d$. Furthermore we prove a quantitative chaos convergence rate and we prove point $(i)$ of Theorem~\ref{thm:intro-FN}. Then, in Section~\ref{sec:entropy} we investigate the entropic and Fisher's information chaos. First, we study the entropic chaos for the probability distribution built before in Section~\ref{sec:chaotic} and we prove point $(ii)$ of Theorem~\ref{thm:intro-FN}. Then, we link these three notions of chaos and investigate a more general class of probability measures on $\SS^N_\BB$, proving Theorem~\ref{thm:intro-GN} and Theorem~\ref{thm:intro-op}. Finally, in Section~\ref{sec:appli} we use our previous results to prove Theorem~\ref{thm:intro-PropChaos}.

\bigskip\noindent
{\bf Acknowledgements.} 
The author would like to thank S.\@ Mischler and C.\@ Mouhot for their constant encouragement, fruitful discussions and careful reading of this paper. 
The author also thanks M.\@ Hauray for discussions on the representation of Fisher's information on the Boltzmann's sphere and A.\@ Einav for discussions on integration over Boltzmann's spheres. Finally, the author thanks the referees for helpful suggestions.

\section{Uniform probability measure}\label{sec:unif}

Consider $V=(v_1,\dots,v_N)\in \R^{dN}$, $r\in\R_+$ and $z\in\R^d$. We define the sphere
\begin{equation*}\label{}
\SS^N(r,z):= \left\{ V=(v_1,\dots,v_N)\in \R^{dN} \,\Big\vert\, \sum_{i=1}^N v_i^2 = r^2,\; \sum_{i=1}^N v_i = z\right\}.
\end{equation*}

We denote by $\gamma^N_{r,z}$ the uniform probability measure on $\SS^N(r,z)$.
We recall that $\SS^N_\BB := \SS^N(\sqrt{dN},0)$ is the Boltzmann sphere and we denote by $\gamma^N := \gamma^N_{\sqrt{dN},0}$ its uniform probability measure. Moreover, we also denote by $\Sp^{n-1}(r) \subset \R^n$ the usual sphere of dimension $n-1$ and radius $r$, $\Sp^{n-1}:=\Sp^{n-1}(1)$ and by $\Abs{\Sp^{n-1}}$ its measure. We can easily compute the measure of $\SS^N(r,z)$ by
\begin{equation}\label{eq:mesureSN}
\left|\SS^N(r,z)\right| 
= \left| \Sp^{d(N-1)-1}\right| \left(r^2 - \frac{|z|^2}{N}\right)^{\frac{d(N-1)-1}{2}}_{+},
\end{equation}

For $V=(v_1,\dots,v_N)\in \R^{dN}$, we shall use through the paper the notation $V_\ell = (v_1,\dots,v_\ell)\in \R^{d\ell}$, $V_{\ell,N} = (v_{\ell+1},\dots,v_N)\in \R^{d(N-\ell)}$ and $\bar V_\ell = \sum_{i=1}^\ell v_i \in\R^{d}$.

We begin with the following result of a change of variables, proved in Appendix~\ref{ap:change}.

\begin{lemma}\label{lem:change} 
Consider $V \in \SS^N(r,z)$. We can make a change of coordinates $(v_1,\dots,v_N) \to (u_1,\dots,u_N)$ in the following way
\begin{equation}\label{eq:change1}
\begin{aligned}
u_N &= \frac{1}{\sqrt{N}} (v_1 + \cdots + v_N) \\
u_k &= \frac{1}{\sqrt{k(k+1)}} (v_1 + \cdots + v_k - k\, v_{k+1}), \quad  1\leq k\leq N-1,
\end{aligned}
\end{equation}
such that the Jacobian is equal to one, $|u_1|^2 + \cdots + |u_{N}|^2= |v_1|^2 + \cdots + |v_N|^2$ and
\begin{equation}\label{eq:change2}
\left\{
\begin{aligned}
&|v_1|^2 + \cdots + |v_N|^2 = r^2 \\
&v_{1,\alpha} + \cdots + v_{N,\alpha} = z_\alpha
\end{aligned}
\right.
\quad \to \quad
\left\{
\begin{aligned}
&|u_1|^2 + \cdots + |u_{N-1}|^2 = r^2 - \frac{|z|^2}{N} \\
&u_{N,\alpha} = \frac{z_\alpha}{\sqrt{N}}, \quad 1\leq \alpha\leq d.
\end{aligned}
\right. 
\end{equation}
\end{lemma}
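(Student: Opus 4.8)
The plan is to recognise \eqref{eq:change1} as an orthogonal change of variables. First I would observe that the map is linear in $V=(v_1,\dots,v_N)\in\R^{dN}$ and acts diagonally on the $d$ Cartesian components: writing $u_{\cdot,\alpha}=A\,v_{\cdot,\alpha}$ for $\alpha\in\{1,\dots,d\}$, the matrix $A=(a_{kj})\in\R^{N\times N}$ has $N$-th row $\tfrac1{\sqrt N}(1,\dots,1)$ and, for $1\le k\le N-1$, $k$-th row $\tfrac1{\sqrt{k(k+1)}}(1,\dots,1,-k,0,\dots,0)$ with the entry $-k$ sitting in column $k+1$. Thus the whole transformation is $U=(A\otimes I_d)V$, and every assertion of the lemma reduces to the claim $A\in O(N)$.

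Next I would check that the rows of $A$ are orthonormal. The $k$-th row with $k\le N-1$ has squared norm $\tfrac1{k(k+1)}(k+k^2)=1$, and the $N$-th row has squared norm $\tfrac1N\cdot N=1$. For the cross terms: the $N$-th row against the $k$-th row gives $\tfrac1{\sqrt N\sqrt{k(k+1)}}(k-k)=0$; and for $1\le j<k\le N-1$ the $j$-th row is supported on columns $1,\dots,j+1$, while the $k$-th row equals $\tfrac1{\sqrt{k(k+1)}}$ on all of those columns (its only negative entry being in column $k+1>j+1$), so the inner product is proportional to $j\cdot1+1\cdot(-j)=0$. Hence $AA^{\top}=I_N$.

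From $A\in O(N)$ everything follows. Since $\det A=\pm1$, the Jacobian of $U=(A\otimes I_d)V$ is $|\det A|^d=1$; since $A$ is an isometry, $\sum_{i=1}^N|u_i|^2=\sum_{\alpha=1}^d|Av_{\cdot,\alpha}|^2=\sum_{\alpha=1}^d|v_{\cdot,\alpha}|^2=\sum_{i=1}^N|v_i|^2$; and on $\SS^N(r,z)$ the defining formula $u_N=\tfrac1{\sqrt N}\sum_i v_i$ turns $\sum_i v_i=z$ into $u_{N,\alpha}=z_\alpha/\sqrt N$, whence $\sum_{i=1}^{N-1}|u_i|^2=\sum_{i=1}^N|u_i|^2-|u_N|^2=r^2-|z|^2/N$. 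As $A\otimes I_d$ is a linear bijection of $\R^{dN}$, it carries $\SS^N(r,z)$ onto the $(d(N-1)-1)$-dimensional sphere of radius $\sqrt{r^2-|z|^2/N}$ inside the slice $\{u_N=z/\sqrt N\}$, and being an isometry of Jacobian one it preserves the induced surface measure — which is the use made of it later for $\gamma^N_{r,z}$.

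I do not expect a genuine obstacle. The only point needing care is the index bookkeeping in the orthogonality computation, namely tracking that the entry $-k$ of the $k$-th row occupies column $k+1$, so that for $j<k\le N-1$ the two rows overlap on exactly the columns $1,\dots,j+1$. One could instead prove the norm identity and the Jacobian formula by a telescoping induction on $N$, but recognising $A$ as a Helmert-type orthogonal matrix is the shortest route.
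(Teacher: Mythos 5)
Your proposal is correct, and it takes a genuinely shorter, more conceptual route than the paper. The paper proves $\sum_i|u_i|^2=\sum_i|v_i|^2$ by expanding the quadratic form and telescoping the coefficients of $|v_m|^2$ and of $v_m\cdot v_p$, and then separately establishes $\det(M_N)=1$ by factoring $M_N=D_NA_N$ and running a recursion $\Det(A_N)=N\Det(A_{N-1})$. You instead observe that the map is $U=(A\otimes I_d)V$ with $A$ a Helmert-type matrix, check orthonormality of the rows directly (which is cheap because row $j$ with $j<k\le N-1$ overlaps row $k$ only on columns $1,\dots,j+1$, where row $k$ is constant), and then read off everything at once: isometry gives the norm identity, $|\det A|=1$ gives the Jacobian (the paper's sharper statement $\det=+1$ is not needed since the change-of-variables formula uses the absolute value), and the two defining constraints of $\SS^N(r,z)$ transform as claimed. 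The main thing your route buys is unification — the paper treats the norm identity and the determinant as two unrelated calculations, whereas you derive both from the single fact $A\in O(N)$; it also makes transparent, without further argument, the assertion you add at the end, that the map carries the induced surface measure on $\SS^N(r,z)$ to that on the target sphere, which is implicitly used later for $\gamma^N_{r,z}$.
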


With these definitions and notations at hand we can study some properties of the uniform probability measure $\gamma^N$ on $\SS^N_\BB$. We remark that these estimates can also be obtained using correlation operators on the Boltzmann's sphere as in Carlen, Carvalho and Loss \cite{CCL}.

\begin{lemma}\label{lem:gammaNl}
We have the following properties
\begin{enumerate}[(i)]

\item for any $\ell\le N-1$ the $\ell$-marginal of $\gamma^N$ is given by $\gamma^N_\ell (dV_\ell)=\gamma^N_\ell (V_\ell) \,dV_\ell$ with
\begin{equation}\label{eq:gammaNl}
\gamma^N_\ell (V_\ell)  = \frac{ \Abs{\Sp^{d(N-\ell-1)-1}} }{\Abs{\Sp^{d(N-1)-1}}} \,
\frac{N^{\frac{d}{2}}}{(N-\ell)^{\frac{d}{2}}} \, \frac{ \left( dN-|V_\ell|^2 - \frac{|\bar V_\ell|^2}{N-\ell}    \right)^{\frac{d(N-\ell-1)-2}{2}}_+ }{(dN)^{\frac{d(N-1)-2}{2}}},
\end{equation}
where $dV_\ell = dv_1\dots dv_\ell$ is the Lebesgue measure on $\R^{d\ell}$.\\

\item the moments of $\gamma^N_\ell$ are uniformly bounded in $N$, more precisely, for $k\ge 1$ we have $M_k(\gamma^N_\ell)~\leq~C_{d,k,\ell}$, where $C_{d,k,\ell}$ depends on $d,k$ and $\ell$.
\end{enumerate}

\end{lemma}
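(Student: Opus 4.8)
The plan is to derive the density of the $\ell$-marginal $\gamma^N_\ell$ by disintegration, relying on the change of coordinates in Lemma \ref{lem:change} and the measure formula \eqref{eq:mesureSN}. First I would note that, by definition of the uniform probability and of the marginal, for any test function $\varphi\in C_b(\R^{d\ell})$ one has
\begin{equation*}
\int_{\R^{d\ell}}\varphi\,d\gamma^N_\ell
=\frac{1}{|\SS^N_\BB|}\int_{\SS^N_\BB}\varphi(V_\ell)\,d\sigma(V),
\end{equation*}
where $d\sigma$ is the surface measure. The strategy is to integrate out the last $N-\ell$ variables: for fixed $V_\ell=(v_1,\dots,v_\ell)$, the remaining velocities $V_{\ell,N}=(v_{\ell+1},\dots,v_N)$ are constrained by $\sum_{i=\ell+1}^N v_i = -\bar V_\ell$ and $\sum_{i=\ell+1}^N |v_i|^2 = dN - |V_\ell|^2$, i.e.\ they range over the sphere $\SS^{N-\ell}\bigl(\sqrt{dN-|V_\ell|^2},\,-\bar V_\ell\bigr)$ in $\R^{d(N-\ell)}$. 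Using \eqref{eq:mesureSN}, the surface measure of this fibre is
\begin{equation*}
\bigl|\Sp^{d(N-\ell-1)-1}\bigr|\left(dN-|V_\ell|^2-\frac{|\bar V_\ell|^2}{N-\ell}\right)^{\frac{d(N-\ell-1)-1}{2}}_{+}.
\end{equation*}
The subtle point is the correct Jacobian/co-area factor: slicing the sphere $\SS^N_\BB$ by fixing $V_\ell$ and recording the fibre is most cleanly done through the orthogonal coordinates $u_1,\dots,u_N$ of Lemma \ref{lem:change}, which have unit Jacobian and turn the two quadratic/linear constraints into the decoupled form \eqref{eq:change2}; one writes the surface measure of $\SS^N_\BB$ as (radius)$\times$(uniform measure on the unit sphere in the $u$-variables), expresses $dV_\ell$ in terms of part of the $u$'s, and integrates over the complementary block. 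Carrying this out, together with the explicit radii $\sqrt{dN}$ for $\SS^N_\BB$ and $\sqrt{dN-|V_\ell|^2}$ for the fibre and the normalization $|\SS^N_\BB|=|\Sp^{d(N-1)-1}|(dN)^{(d(N-1)-1)/2}$, produces exactly \eqref{eq:gammaNl}, the power $\frac{d(N-\ell-1)-2}{2}$ arising after accounting for the one extra power of the fibre radius absorbed by the change of variables relating $dv_{\ell+1}\cdots dv_N$ on the fibre to the uniform measure on $\Sp^{d(N-\ell-1)-1}$. The main obstacle is precisely this bookkeeping of dimensions and radius exponents; everything else is routine.

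For part $(ii)$, the plan is to estimate $M_k(\gamma^N_\ell)=\int_{\R^{d\ell}}|V_\ell|^k\,\gamma^N_\ell(V_\ell)\,dV_\ell$ directly from formula \eqref{eq:gammaNl}. By symmetry it suffices to bound $\int |v_1|^k\,\gamma^N_1(v_1)\,dv_1$ and then use $|V_\ell|^k\le \ell^{k/2-1}\sum_{i=1}^\ell|v_i|^k$ (or simply $M_k(\gamma^N_\ell)\le \ell^{k/2} M_k(\gamma^N_1)$ up to a constant) to reduce to $\ell=1$. For $\ell=1$ the density is, up to the ratio of sphere surface areas (which one controls by Stirling, showing $\frac{|\Sp^{d(N-2)-1}|}{|\Sp^{d(N-1)-1}|}(dN)^{d/2}\cdot\frac{N^{d/2}}{(N-1)^{d/2}}$ stays bounded in $N$), a constant multiple of $\bigl(1-\frac{|v_1|^2}{dN}\bigr)^{\frac{d(N-2)-2}{2}}_+$ on the ball $|v_1|^2\le dN$. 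Thus
\begin{equation*}
M_k(\gamma^N_1)\le C_{d}\int_{|v_1|^2\le dN}|v_1|^k\left(1-\frac{|v_1|^2}{dN}\right)^{\frac{d(N-2)-2}{2}}dv_1,
\end{equation*}
and after the substitution $v_1=\sqrt{dN}\,w$ this becomes $C_d(dN)^{k/2}\int_{|w|\le1}|w|^k(1-|w|^2)^{\frac{d(N-2)-2}{2}}dw$; passing to polar coordinates and using the Beta-function identity $\int_0^1 r^{k+d-1}(1-r^2)^{m}dr=\tfrac12 B\bigl(\tfrac{k+d}{2},m+1\bigr)$ with $m\sim dN/2$, one sees the integral is of order $(dN)^{-(k+d)/2}\cdot(dN)^{d/2}=(dN)^{-k/2}$ (again by Stirling), which exactly cancels the prefactor $(dN)^{k/2}$ and leaves a bound $C_{d,k}$ independent of $N$. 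Combining, $M_k(\gamma^N_\ell)\le C_{d,k,\ell}$, as claimed. The only mild care needed here is ensuring $N$ is large enough that the exponent $\frac{d(N-2)-2}{2}$ is positive and the Stirling asymptotics apply; for the finitely many small $N$ the bound is trivial since the sphere is compact.
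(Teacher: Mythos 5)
Your approach to part (i) is genuinely different from the paper's, and it is sound in principle, but you leave the one real difficulty unexecuted. You disintegrate $\SS^N_\BB$ along $V\mapsto V_\ell$, use \eqref{eq:mesureSN} for the fibre measure, and (correctly) observe that a Jacobian/co-area factor must drop the exponent from $\frac{d(N-\ell-1)-1}{2}$ to $\frac{d(N-\ell-1)-2}{2}$ --- this factor is indeed $\propto R/\rho$, where $R=\sqrt{dN}$ is the ambient radius and $\rho$ the fibre radius, as one can check by writing $V=(V_\ell,\rho\,\omega)$ and computing the surface metric. But the proposal does not actually derive this; it just asserts that ``carrying this out'' gives \eqref{eq:gammaNl}, and the sentence about ``expressing $dV_\ell$ in terms of part of the $u$'s and integrating over the complementary block'' is too vague to count as a derivation of the Jacobian (the change of coordinates in Lemma~\ref{lem:change} mixes all the $v_i$, so isolating $V_\ell$ is not immediate). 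The paper sidesteps the co-area issue entirely: it writes $\gamma^N_{r,z}$ as $\frac{d}{dr}$ applied to the indicator of the (affine-slice) ball $B^N_z(r)$, computes $\langle {\bf 1}_{B^N_z(r)}, \varphi\otimes{\bf 1}^{N-\ell}\rangle$ as a $d(N-\ell-1)$-dimensional ball volume (power $\frac{d(N-\ell-1)}{2}$), and differentiates in $r$, which lowers the exponent by $1$ without any geometric Jacobian. The price is that the normalization constant $Z^N$ must be computed separately, which the paper does via the change of variables and Gamma/Beta identities. If you want to go the disintegration route, you must carry out the co-area computation (or at least prove the $R/\rho$ factor) rather than declaring it bookkeeping --- it is the entire content of part (i).

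For part (ii), your reduction to $\ell=1$ via $|V_\ell|^k\le \ell^{k/2-1}\sum_i|v_i|^k$ and exchangeability is a genuine (and tidier) simplification compared to the paper's direct treatment of general $\ell$, and the Beta-function/Stirling computation is the same in spirit. Note a small arithmetic slip in your sketch: after $v_1=\sqrt{dN}\,w$ the prefactor is $(dN)^{(k+d)/2}$, not $(dN)^{k/2}$, and the Beta/Stirling estimate gives the polar integral $\sim (dN)^{-(k+d)/2}$, not $(dN)^{-(k+d)/2}\cdot(dN)^{d/2}$. The two errors cancel, so your conclusion $M_k(\gamma^N_1)\le C_{d,k}$ is correct, but as written the intermediate lines don't balance.
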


Before the proof, we refer to \cite{Einav} where a Fubini-like theorem on $\SS^N(r,z)$ is proved, which yields a generalization of \eqref{eq:gammaNl} for the $\ell$-marginal of 
$\gamma^N_{r,z}$.

\begin{proof}
Let us split the proof.

\subsubsection*{(i)} We can define $\gamma^N_{r,z}$ by
$$
\gamma^N_{r,z} := \frac{1}{Z^N_{r,z}} \lim_{h\to 0} \frac{1}{h}\left( {\bf 1}_{B^N_z(r+h)} - {\bf 1}_{B^N_z(r)}  \right), \quad B^N_z (r) := \{ V \in \R^{dN}; |V|\leq r ,\; \sum_{i=1}^N v_i = z    \},
$$
where $Z^N_{r,z}$ is the normalization constant so that the integral of $\gamma^N_{r,z}$ is one.

Consider $\varphi \in C(\R^{d\ell})$, for $\ell \leq N-1$, then 
$$
\begin{aligned}
&\Ps{{\bf 1}_{B^N_z(r)}}{\varphi \otimes {\bf 1}^{N-\ell}} \\
&\quad= \int_{\R^{dN}} {\bf 1}_{|V_\ell|^2  + |V_{\ell,N}|^2 \leq r^2} \,
{\bf 1}_{\bar V_\ell + v_{\ell+1} + \cdots + v_N =z}  \, \varphi(V_\ell) \, dV_\ell \,  dV_{\ell,N} \\
&\quad= \int_{\R^{d\ell}}  \varphi(V_\ell) \left( \int_{\R^{d(N-\ell)}} {\bf 1}_{|V_{\ell,N}|^2  \leq r^2 - |V_\ell|^2}
{\bf 1}_{v_{\ell+1} + \cdots + v_N =z - \bar V_\ell}   \, dV_{\ell,N} \right)\,
 dV_\ell \\
&\quad= \int_{\R^{d\ell}} \varphi(V_\ell) \left|\mathbb B^{d(N-\ell-1)}\right| \left( r^2 - |V_\ell|^2 - \frac{|z-\bar V_\ell|^2}{N-\ell} \right)^{\frac{d(N-\ell-1)}{2}}_+ \, dV_\ell,
\end{aligned}
$$
where $|\mathbb B^{d(N-\ell-1)}|$ is the measure of the unit ball in dimension $d(N-\ell-1)$.
We deduce then that the $\ell$-marginal of $\gamma^N_{r,z}$, denoted by $\Pi_\ell(\gamma^N_{r,z})$, is given by
$$
\begin{aligned}
\Pi_\ell(\gamma^N_{r,z} ) &= \frac{1}{Z^{N}_{r,z}} \frac{d}{dr}\left[ \left|\mathbb B^{d(N-\ell-1)}\right| \left( r^2 - |V_\ell|^2 - \frac{|z-\bar V_\ell|^2}{N-\ell} \right)^{\frac{d(N-\ell-1)}{2}}_+   \right] \\
&= \frac{\left|\mathbb B^{d(N-\ell-1)}\right|}{Z^{N}_{r,z}} d(N-\ell-1) \, r \left( r^2 - |V_\ell|^2 - \frac{|z-\bar V_\ell|^2}{N-\ell} \right)^{\frac{d(N-\ell-1)-2}{2}}_+ \\
&= \frac{\left|\Sp^{d(N-\ell-1)-1}\right|}{Z^{N}_{r,z}} r \left( r^2 - |V_\ell|^2 - \frac{|z-\bar V_\ell|^2}{N-\ell} \right)^{\frac{d(N-\ell-1)-2}{2}}_+
\end{aligned}
$$
and in the particular case $r^2=dN$, $z=0$
\begin{equation}\label{eq:gammaNl-0}
\begin{aligned}
\Pi_\ell(\gamma^N )= \gamma^N_\ell  
&= \frac{\left|\Sp^{d(N-\ell-1)-1}\right|}{Z^{N}_{\sqrt{dN},0}} (dN)^{1/2} \left( dN - |V_\ell|^2 - \frac{|\bar V_\ell|^2}{N-\ell} \right)^{\frac{d(N-\ell-1)-2}{2}}_+.
\end{aligned}
\end{equation}
Now we shall compute $Z^N :=Z^{N}_{\sqrt{dN},0}$, with
\begin{equation}\label{eq:Z}
\begin{aligned}
Z^N = \left|\Sp^{d(N-\ell-1)-1}\right| (dN)^{1/2} \int_{\R^{d\ell}}  \left( dN - |V_\ell|^2 - \frac{|\bar V_\ell|^2}{N-\ell} \right)^{\frac{d(N-\ell-1)-2}{2}}_+ \, dV_\ell.
\end{aligned}
\end{equation}
We start by the integral
$$
\begin{aligned}
A =  \int_{\R^{d\ell}}  \left( dN - |V_\ell|^2 - \frac{|\bar V_\ell|^2}{N-\ell} \right)^{\frac{d(N-\ell-1)-2}{2}}_+ \, dV_\ell,
\end{aligned}
$$
with the changement of variable \eqref{eq:change1}-\eqref{eq:change2} (replacing $N$ by $\ell$), with the notation $U = U_{\ell-1}= (u_1,\dots, u_{\ell-1})$ and $ x = u_\ell$ to simplify, we obtain 
$$
\begin{aligned}
A =  \int_{\R^{d\ell}}  \left( dN - |U|^2 - \frac{N}{N-\ell}|x|^2 \right)^{\frac{d(N-\ell-1)-2}{2}}_+ \, dU dx.
\end{aligned}
$$
Changing $U$ to spherical coordinates in dimension $d(\ell-1)$, we have
\begin{equation}\label{eq:A}
\begin{aligned}
A &=  \int_{\R^d}\int_0^{\infty} |\Sp^{d(\ell-1)-1}| \left( dN - \rho^2 - \frac{N}{N-\ell}|x|^2 \right)^{\frac{d(N-\ell-1)-2}{2}}_+ \, \rho^{d(\ell-1)-1} \,d\rho \, dx \\
&= |\Sp^{d(\ell-1)-1}| \int_0^{\infty} \left( \int_{\R^d}\left( dN - \rho^2 - \frac{N}{N-\ell}|x|^2 \right)^{\frac{d(N-\ell-1)-2}{2}}_+ \, dx  \right) \rho^{d(\ell-1)-1} \,d\rho.
\end{aligned}
\end{equation}
Looking first to the integral over $\R^d$ we obtain, changing $x$ to spherical coordinates in dimension $d$,
$$
\begin{aligned}
B &=  \int_{\R^d}\left( dN - \rho^2 - \frac{N}{N-\ell}|x|^2 \right)^{\frac{d(N-\ell-1)-2}{2}}_+ \, dx  \\
&= |\Sp^{d-1}| \int_{0}^{\infty} \left( dN - \rho^2 - \frac{N}{N-\ell}y^2 \right)^{\frac{d(N-\ell-1)-2}{2}}_+ \,y^{d-1} dy,
\end{aligned}
$$
and after some computations we get
$$
\begin{aligned}
B
&= \frac{|\Sp^{d-1}|}{2} \left( \frac{N-\ell}{N}\right)^{d/2} (dN-\rho^2)^{\frac{d(N-\ell)-2}{2}}_+
\int_0^1 (1-y)^{\frac{d(N-\ell-1)-2}{2}} y^{\frac{d-2}{2}}\, dy\\
&= \frac{|\Sp^{d-1}|}{2} \left( \frac{N-\ell}{N}\right)^{d/2} (dN-\rho^2)^{\frac{d(N-\ell)-2}{2}}_+\,
\frac{ \Gamma\left( \frac{d(N-\ell-1)-2}{2} + 1  \right) \Gamma\left( \frac{d-2}{2} + 1  \right) }{ \Gamma\left( \frac{d(N-\ell-1)-2}{2} + \frac{d-2}{2} + 2  \right)}.
\end{aligned}
$$
Plugging this expression in \eqref{eq:A} we get
\begin{equation*}\label{}
\begin{aligned}
A 
&= |\Sp^{d(\ell-1)-1}| \frac{|\Sp^{d-1}|}{2} \left( \frac{N-\ell}{N}\right)^{d/2} \,
\frac{ \Gamma\left( \frac{d(N-\ell-1)-2}{2} + 1  \right) \Gamma\left( \frac{d-2}{2} + 1  \right) }{ \Gamma\left( \frac{d(N-\ell-1)-2}{2} + \frac{d-2}{2} + 2  \right)}\\
&\quad \times \int_0^\infty (dN-\rho^2)^{\frac{d(N-\ell)-2}{2}}_+ \rho^{d(\ell-1)-1} \,d\rho,
\end{aligned}
\end{equation*}
and we can compute the last integral
$$
\begin{aligned}
C &:= \int_0^\infty (dN-\rho^2)^{\frac{d(N-\ell)-2}{2}}_+ \rho^{d(\ell-1)-1} \,d\rho\\
&= \frac12 (dN)^{\frac{d(N-1)-2}{2}}\, \frac{ \Gamma\left( \frac{d(N-\ell)-2}{2} + 1  \right) \Gamma\left( \frac{d(\ell-1)-2}{2} + 1  \right) }{ \Gamma\left( \frac{d(N-\ell)-2}{2} + \frac{d(\ell-1)-2}{2} + 2  \right)}.
\end{aligned}
$$
Finally, plugging this in \eqref{eq:Z}, we obtain
\begin{equation*}\label{}
\begin{aligned}
Z^N &= \left|\Sp^{d(N-\ell-1)-1}\right|\,  \left|\Sp^{d(\ell-1)-1}\right| \frac{\left|\Sp^{d-1}\right|}{2} \left( \frac{N-\ell}{N}\right)^{d/2} \, \frac12 (dN)^{\frac{d(N-1)-1}{2}} \\
&\quad \times
\frac{ \Gamma\left( \frac{d(N-\ell-1)}{2}  \right) \Gamma\left( \frac{d}{2}  \right) }{ \Gamma\left( \frac{d(N-\ell)}{2}   \right)}\,  \frac{ \Gamma\left( \frac{d(N-\ell)}{2}  \right) \Gamma\left( \frac{d(\ell-1)}{2}  \right) }{ \Gamma\left( \frac{d(N-1)}{2}    \right)}
\end{aligned}
\end{equation*}
and using the fact that 
\begin{equation}\label{eq:measureS}
|\Sp^{n-1}| = \frac{2\pi^{n/2}}{\Gamma\left( \frac{n}{2} \right)}
\end{equation}
we have
\begin{equation}\label{eq:Zfin}
\begin{aligned}
Z^N &= \left|\Sp^{d(N-1)-1}\right|\,  (dN)^{\frac{d(N-1)-1}{2}} \, \left( \frac{N-\ell}{N}\right)^{d/2} , 
\end{aligned}
\end{equation}
then we conclude by plugging \eqref{eq:Zfin} in \eqref{eq:gammaNl-0}.

\subsubsection*{(ii)}Let $k\ge 1$ be a even integer. We have then to compute $M_k(\gamma^N_\ell)$
\begin{equation}\label{eq=Mk}
\begin{aligned}
\int_{\R^{d\ell}} |V_\ell|^k\, \gamma^N_\ell(V_\ell)\, dV_\ell 
&=    \frac{ \Abs{\Sp^{d(N-\ell-1)-1}} }{\Abs{\Sp^{d(N-1)-1}}} \,
\frac{  \left(\frac{N}{N-\ell}\right)^{\frac{d}{2}} }{(dN)^{\frac{d(N-1)-2}{2}} }  \\
&\quad \times
\int_{\R^{d\ell}} |V_\ell|^k\, \left( dN-|V_\ell|^2 - \frac{|\bar V_\ell|^2}{N-\ell}    \right)^{\frac{d(N-\ell-1)-2}{2}}_+ \, dV_\ell.
\end{aligned}
\end{equation}

As in the proof of (i), we use the change of coordinates \eqref{eq:change1}-\eqref{eq:change2}, then to simplify we denote $U=U_{\ell-1}=(u_1,\dots,u_{\ell-1})$ and $x=u_\ell$. Hence we can compute the integral
\begin{equation*}\label{}
\begin{aligned}
A_k &=\int_{\R^{d\ell}} |V_\ell|^k\, \left( dN-|V_\ell|^2 - \frac{|\bar V_\ell|^2}{N-\ell}    \right)^{\frac{d(N-\ell-1)-2}{2}}_+ \, dV_\ell\\
&= \int_{\R^{d\ell}} \left( |U|^2 + |x|^2 \right)^{\frac{k}{2}}\, \left( dN-|U|^2 - \frac{N}{N-\ell} |x|^2   \right)^{\frac{d(N-\ell-1)-2}{2}}_+  dU\,dx.
\end{aligned}
\end{equation*}
With another change of coordinates, $U$ to spherical coordinates in dimension $d(\ell-1)$, $x$ also to spherical coordinates in dimension $d$ we have
\begin{equation*}\label{}
\begin{aligned}
A_k 
&= \left|\Sp^{d(\ell-1)-1}\right| \left|\Sp^{d-1}\right| \int_0^\infty \!\!\! \int_0^\infty \left( \rho^2 + y^2 \right)^{\frac{k}{2}}\, \left( dN-\rho^2 - \frac{N}{N-\ell} y^2   \right)^{\frac{d(N-\ell-1)-2}{2}}_+ \rho^{d(\ell-1)-1} y^{d-1} d\rho\,dy \\
&\leq C \left|\Sp^{d(\ell-1)-1}\right| \left|\Sp^{d-1}\right| \int_0^\infty \rho^k \left\{ \int_0^\infty \left( dN-\rho^2 - \frac{N}{N-\ell} y^2   \right)^{\frac{d(N-\ell-1)-2}{2}}_+ y^{d-1}\, dy\right\}\rho^{d(\ell-1)-1}\, d\rho\\
&+C \left|\Sp^{d(\ell-1)-1}\right| \left|\Sp^{d-1}\right| \int_0^\infty \left\{ \int_0^\infty y^k \left( dN-\rho^2 - \frac{N}{N-\ell} y^2   \right)^{\frac{d(N-\ell-1)-2}{2}}_+ y^{d-1}\, dy\right\}\rho^{d(\ell-1)-1}\, d\rho\\
&=: I_1 + I_2. 
\end{aligned}
\end{equation*}
For the first term we have (already computed in (i))
\begin{equation*}\label{}
\begin{aligned}
I_1 
&= \frac12 \left|\Sp^{d(\ell-1)-1}\right| \left|\Sp^{d-1}\right| \left(  \frac{N-\ell}{N} \right)^{\frac{d}{2}}
\frac{\Gamma\left( \frac{d(N-\ell-1)}{2}\right)\Gamma\left( \frac{d}{2}\right)}
{\Gamma\left( \frac{d(N-\ell)}{2}\right)} \\
&\quad \times
\int_0^\infty (dN-\rho^2)^{\frac{d(N-\ell)-2}{2}}\rho^{d(\ell-1)-1+k}\,d\rho\\
&=\frac12 \left|\Sp^{d(\ell-1)-1}\right|  \left|\Sp^{d-1}\right| \left(  \frac{N-\ell}{N} \right)^{\frac{d}{2}}
\frac{\Gamma\left( \frac{d(N-\ell-1)}{2}\right)\Gamma\left( \frac{d}{2}\right)}
{\Gamma\left( \frac{d(N-\ell)}{2}\right)}\\
&\quad \times 
\frac12 (dN)^{\frac{d(N-1)-2+k}{2}}
\frac{\Gamma\left( \frac{d(N-\ell)}{2}\right)\Gamma\left( \frac{d(\ell-1)+k}{2}\right)}
{\Gamma\left( \frac{d(N-1)+k}{2}\right)}.
\end{aligned}
\end{equation*}
In the same way, we can compute the second term to get
\begin{equation*}\label{}
\begin{aligned}
I_2
&= \frac12 \left|\Sp^{d(\ell-1)-1}\right|   \left|\Sp^{d-1}\right| \left(  \frac{N-\ell}{N} \right)^{\frac{d+k}{2}}
\frac{\Gamma\left( \frac{d(N-\ell-1)}{2}\right)\Gamma\left( \frac{d+k}{2}\right)}
{\Gamma\left( \frac{d(N-\ell)+k}{2}\right)} \\
&\quad \times
\int_0^\infty (dN-\rho^2)^{\frac{d(N-\ell)-2+k}{2}}\rho^{d(\ell-1)-1}\,d\rho\\
&=\frac12  \left|\Sp^{d(\ell-1)-1}\right|  \left|\Sp^{d-1}\right| \left(  \frac{N-\ell}{N} \right)^{\frac{d+k}{2}}
\frac{\Gamma\left( \frac{d(N-\ell-1)}{2}\right)\Gamma\left( \frac{d+k}{2}\right)}
{\Gamma\left( \frac{d(N-\ell)+k}{2}\right)} \\
&\quad \times 
\frac12 (dN)^{\frac{d(N-1)-2+k}{2}}
\frac{\Gamma\left( \frac{d(N-\ell)+k}{2}\right)\Gamma\left( \frac{d(\ell-1)}{2}\right)}
{\Gamma\left( \frac{d(N-1)+k}{2}\right)}.
\end{aligned}
\end{equation*}
Plugging this two estimates in \eqref{eq=Mk} we obtain after some simplifications
\begin{equation*}\label{}
\begin{aligned}
M_k(\gamma^N_\ell) &\leq \frac{ \Abs{\Sp^{d(N-\ell-1)-1}} }{\Abs{\Sp^{d(N-1)-1}}} \,
\frac{  \left(\frac{N}{N-\ell}\right)^{\frac{d}{2}} }{(dN)^{\frac{d(N-1)-2}{2}} }\,(I_1+I_2)\\
&\le (dN)^{\frac{k}{2}} 
\frac{\Gamma\left( \frac{d(N-1)}{2}\right)}{\Gamma\left( \frac{d(N-1)+k}{2}\right)}\,
\frac{\Gamma\left( \frac{d(\ell-1)+k}{2}\right)}{\Gamma\left( \frac{d(\ell-1)}{2}\right)}
+
(dN)^{\frac{k}{2}} 
\frac{\Gamma\left( \frac{d(N-1)}{2}\right)}{\Gamma\left( \frac{d(N-1)+k}{2}\right)}\,
\frac{\Gamma\left( \frac{d+k}{2}\right)}{\Gamma\left( \frac{d}{2}\right)}.
\end{aligned}
\end{equation*}
Using the fact that for $k$ even we have 
$$
\begin{aligned}
\Gamma\left( \frac{n}{2} + \frac{k}{2} \right) &= \frac{(n+k-2)}{2}\, \frac{(n+k-4)}{2} \cdots 
\frac{n}{2} \,\Gamma\left( \frac{n}{2}\right) \\&= \frac{1}{2^{\frac{k}{2}}}
\underbrace{(n+k-2)(n+k-4)\cdots n}_{k/2 \text{ terms}}\, \Gamma\left( \frac{n}{2}\right),
\end{aligned}
$$
we conclude that
\begin{equation}\label{}
\begin{aligned}
M_k(\gamma^N_\ell) 
&\le \frac{ (dN)^{\frac{k}{2}} }{[d(N-1)+k-2][d(N-1)+k-4]\cdots [d(N-1)]} \\
&\quad\times
\Big(   [d(\ell-1)+k-2][d(\ell-1)+k-4]\cdots [d(\ell-1)] \\ 
&\qquad\quad+ (d+k-2)(d+k-4)\cdots d     \Big)\\
&\leq \frac{(dN)^{\frac{k}{2}}}{[d(N-1)]^{\frac{k}{2}}}
\Big(   [d(\ell-1)+k-2][d(\ell-1)+k-4]\cdots [d(\ell-1)] \\
&\qquad\qquad\qquad\qquad+ (d+k-2)(d+k-4)\cdots d     \Big)\\
&\leq 2^{\frac{k}{2}}\Big(   [d(\ell-1)+k-2][d(\ell-1)+k-4]\cdots [d(\ell-1)] \\
&\qquad\qquad\qquad\qquad+ (d+k-2)(d+k-4)\cdots d     \Big)\\
&\leq C_{d,k,\ell},
\end{aligned}
\end{equation}
where $C_{d,k,\ell}$ depends only on $d$, $k$ and $\ell$. 

We proved then a uniform bound in $N$ for $k$ even. If $k$ is odd we use $|v|^k \leq |v|^{k-1}+|v|^{k+1}$ with the last estimate to conclude.

\end{proof}

Now, using this explicit formula for $\gamma^N_\ell$ computed above, we prove that $\gamma^N$ 
is $\gamma$-chaotic, where $\gamma$ is the Gaussian probability measure in $\R^d$, i.e. $\gamma(v) = (2\pi)^{-d/2}\,e^{-|v|^2/2}$, for $v\in\R^d$. The proof presented here is an adaptation of \cite{DiaconisFreedman1987}, where it is proved that the uniform probability measure on the sphere $\Sp^{n-1}(\sqrt{n}) \subset \R^n$ is $\gamma_1$-chaotic, with $\gamma_1(x) = (2\pi)^{-1/2}\, e^{-x^2/2}$ the one-dimensional Gaussian measure.

\begin{lemma}\label{lem:gammaN-chaos}
The sequence of probability measures $\gamma^N \in \PPP(\SS^N_\BB)$ is $\gamma$-chaotic, more precisely, for any integer $\ell$ such that $d\ell \le d(N - 2)-3$ we have
$$
{\norm{\gamma^N_\ell - \gamma^{\otimes \ell}}}_{L^1} \leq 2\,\frac{d(\ell+2)+2}{dN-d(\ell+2)-2}.
$$
\end{lemma}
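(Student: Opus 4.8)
The plan is to follow the Diaconis--Freedman strategy of comparing the density $\gamma^N_\ell(V_\ell)$ with the Gaussian density $\gamma^{\otimes\ell}(V_\ell) = (2\pi)^{-d\ell/2} e^{-|V_\ell|^2/2}$ by taking the ratio pointwise and estimating how far it is from $1$, then integrating. First I would take the explicit formula \eqref{eq:gammaNl} from Lemma \ref{lem:gammaNl}(i). The key analytic input is the elementary inequality: for any $a\in\R$ and exponent $m>0$ one has, on the region where the base is positive, a two-sided bound comparing $\left(1 - \tfrac{a}{m}\right)^m$ with $e^{-a}$; more precisely, for $0 \le a \le m$ one has $\left(1-\tfrac{a}{m}\right)^m \le e^{-a}$, and a matching lower bound $e^{-a}\left(1 - \tfrac{a^2}{m}\right) \le \left(1-\tfrac{a}{m}\right)^m$ (or a similar variant), so that $\left|\left(1-\tfrac{a}{m}\right)^m - e^{-a}\right| \le e^{-a}\, \tfrac{a^2}{m}$ on the relevant range. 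Applying this with $m = \tfrac{d(N-\ell-1)-2}{2}$ and the base $dN - |V_\ell|^2 - \tfrac{|\bar V_\ell|^2}{N-\ell}$ rewritten in the form $dN\left(1 - \tfrac{\text{stuff}}{dN}\right)$, together with the Stirling-type estimate for the ratio of sphere surface areas $\tfrac{|\Sp^{d(N-\ell-1)-1}|}{|\Sp^{d(N-1)-1}|} \cdot \tfrac{1}{(dN)^{d\ell/2}}$ approaching $(2\pi)^{-d\ell/2}$, I would extract the ratio $\gamma^N_\ell/\gamma^{\otimes\ell}$ and bound its deviation from $1$.

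The next step is to convert the pointwise ratio estimate into the stated $L^1$ bound. Here the cleanest route is the standard trick: if $g$ and $h$ are probability densities and $g(x) \le (1+\delta(x))\, h(x)$ pointwise for some function $\delta\ge 0$ that is integrable against $h$, then $\norm{g-h}_{L^1} \le 2\int \delta\, dh$ (this follows because $\int (g-h)_+ = \int(h-g)_+ = \tfrac12\norm{g-h}_{L^1}$ and $(g-h)_+ \le \delta h$). One must be slightly careful about the support: $\gamma^N_\ell$ is supported where $|V_\ell|^2 + \tfrac{|\bar V_\ell|^2}{N-\ell} \le dN$, which is a subset of $\R^{d\ell}$, whereas $\gamma^{\otimes\ell}$ has full support; the mass of $\gamma^{\otimes\ell}$ outside this ball is exponentially small in $N$ and is absorbed into the bound. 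After collecting the Stirling error from the sphere-area ratio and the $a^2/m$ error from the power-versus-exponential comparison — in which $a$ is of order $|V_\ell|^2 + |\bar V_\ell|^2$, whose Gaussian moments are $O(\ell)$ — and after tracking the $(N-\ell)$ versus $N$ discrepancy in the denominators, everything combines to a bound of order $\tfrac{d\ell + \text{const}}{dN - d\ell - \text{const}}$, which is what is claimed with the explicit constants $d(\ell+2)+2$.

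The main obstacle is making the constants explicit rather than merely $O(1/N)$: one needs a genuinely elementary (not asymptotic) two-sided inequality between $(1-a/m)^m$ and $e^{-a}$ with a clean remainder, and one needs an elementary bound on the sphere-area ratio $\tfrac{|\Sp^{d(N-\ell-1)-1}|}{|\Sp^{d(N-1)-1}|}$ divided by $(dN)^{d\ell/2}$ that avoids invoking Stirling with uncontrolled error terms — likely by telescoping the ratio $\Gamma(\tfrac{d(N-1)}{2})/\Gamma(\tfrac{d(N-\ell-1)}{2})$ into a product of $d\ell/2$ linear factors and comparing each factor to $dN/2$. The bookkeeping to show that all these pieces assemble into the stated rational function of $dN$ and $d\ell$, valid precisely when $d\ell \le d(N-2)-3$ (which is exactly the condition ensuring the exponent $m$ and various Gamma arguments stay positive), is the delicate part; the rest is routine.
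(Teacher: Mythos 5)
Your framework is the right one — take the explicit density from Lemma~\ref{lem:gammaNl}, estimate the ratio $\gamma^N_\ell/\gamma^{\otimes\ell}$, then use the identity $\norm{g-h}_{L^1}=2\int(g-h)_+$ with $(g-h)_+\le\delta\,h$. However, the analytic heart of your plan is genuinely different from the paper's (and from Diaconis--Freedman, which you claim to follow), and that difference is precisely where the stated constant gets lost. You propose to compare the power $\left(1-\tfrac{a}{m}\right)^m$ with $e^{-a}$ \emph{pointwise}, carrying an error of size $e^{-a}a^2/m$ with $a\sim|V_\ell|^2+|\bar V_\ell|^2$, and separately to estimate the ratio of sphere areas. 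Since $a$ has second moment $O(\ell^2)$ under $\gamma^{\otimes\ell}$, the resulting $\int\delta\,dh$ is $O(\ell^2/N)$ with unspecified constants, not the sharp linear-in-$\ell$ bound $\frac{2\bigl(d(\ell+2)+2\bigr)}{dN-d(\ell+2)-2}$ claimed in the lemma.

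The paper's proof does not compare $\left(1-\tfrac{a}{m}\right)^m$ to $e^{-a}$ at all. Instead it writes $\gamma^N_\ell/\gamma^{\otimes\ell}=\bigl(\tfrac{N}{N-\ell}\bigr)^{d/2}h(V_\ell)A$ and \emph{maximizes} $h(V_\ell)=e^{|V_\ell|^2/2}\bigl(1-\tfrac{|V_\ell|^2}{dN}-\tfrac{|\bar V_\ell|^2}{dN(N-\ell)}\bigr)_+^{m}$ over $V_\ell$: the function $z\mapsto\tfrac{z}{2}+m\log\bigl(1-\tfrac{z}{dN}\bigr)$ has its maximum at $z=d(\ell+1)+2$, which yields a bound on $\log h$ that is a \emph{constant}, not a function of $V_\ell$. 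Simultaneously, the telescoped Gamma ratio $A=\prod\bigl(1-\tfrac{d+2j}{dN}\bigr)$ is bounded via the Riemann-sum comparison $\sum\log(1-\tfrac{2j}{dN})\le\int_0^{\cdot}\log(1-\tfrac{2x}{dN})\,dx$, producing an upper bound on $\log A$ whose right-hand side \emph{exactly cancels} the bound on $\log h$. The outcome is the uniform pointwise bound $\gamma^N_\ell/\gamma^{\otimes\ell}-1\le\frac{d(\ell+1)+2}{dN-d(\ell+1)-2}$, after which the $L^1$ integral is trivial because $\delta$ is constant. Your instinct that ``the main obstacle is making the constants explicit'' is correct, but the resolution is not a cleaner two-sided inequality between $(1-a/m)^m$ and $e^{-a}$ plus a tighter Stirling bound handled separately; it is to never split $h$ into an exponential plus an error, and instead use the maximizer of $h$ so that its maximum matches the telescoped product termwise. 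Without that coupling, your route yields qualitative chaoticity and a rate $O_\ell(1/N)$, but not the stated bound.
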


\begin{proof}
Let $\ell$  be an even integer. Then we have
$$
\begin{aligned}
\frac{ \Abs{\Sp^{d(N-\ell-1)-1}} }{\Abs{\Sp^{d(N-1)-1}}} &= \frac{1}{\pi^{\frac{d\ell}{2}}}\,
\frac{\Gamma\left( \frac{d(N-1)}{2} \right)}{\Gamma\left( \frac{d(N-\ell-1)}{2} \right)}\\
&= \frac{(dN)^{\frac{d\ell}{2}}}{(2\pi)^{\frac{d\ell}{2}}} \, \left( 1 -\frac{d+2}{dN}  \right) \left( 1 -\frac{d+4}{dN}   \right)\cdots \left( 1 -\frac{d(\ell+1)}{dN}   \right).
\end{aligned}
$$
By the explicit formula of $\gamma^N_\ell$ in Lemma~\ref{lem:gammaNl} we obtain
$$
\gamma^N_\ell = \frac{\left(\frac{N}{N-\ell}\right)^{\frac{d}{2}}}{(2\pi)^{\frac{d\ell}{2}}} \, \left( 1 -\frac{d+2}{dN}  \right)\cdots \left( 1 -\frac{d(\ell+1)}{dN}   \right)\,
\left( 1-\frac{|V_\ell|^2}{dN} - \frac{|\bar V_\ell|^2}{dN(N-\ell)}    \right)^{\frac{d(N-\ell-1)-2}{2}}_+.
$$
Since $\gamma^N_\ell$ and $\gamma^{\otimes \ell}$ are probability densities, the $L^1$ norm of their difference can be computed in the following way
\begin{equation}\label{eq:L1}
{\norm{\gamma^N_\ell - \gamma^{\otimes \ell}}}_{L^1} = 2 \int_{\R^{d\ell}} \left( \frac{\gamma^N_\ell}{\gamma^{\otimes \ell}} - 1\right)_+ \, \gamma^{\otimes \ell} \,dV_\ell,
\end{equation}
and we shall denote 
$$
\frac{\gamma^N_\ell}{\gamma^{\otimes \ell}} =\left(\frac{N}{N-\ell}\right)^{\frac{d}{2}}\, h(V_\ell)\, A
$$ 
with 
$$
\begin{aligned}
h(V_\ell) &:=  e^{\frac{|V_\ell|^2}{2}}\,
\left( 1-\frac{|V_\ell|^2}{dN} - \frac{|\bar V_\ell|^2}{dN(N-\ell)}    \right)^{\frac{d(N-\ell-1)-2}{2}}_+ 
\end{aligned}
$$
and
$$
\begin{aligned}
A &:= \left( 1 -\frac{d+2}{dN}  \right)\cdots \left( 1 -\frac{d(\ell+1)}{dN}   \right).
\end{aligned}
$$

We obtain that
$$
\begin{aligned}
\log h(V_\ell) &= \frac{|V_\ell|^2}{2} + \frac{d(N-\ell-1)-2}{2}\log \left( 1-\frac{|V_\ell|^2}{dN} - \frac{|\bar V_\ell|^2}{dN(N-\ell)}    \right)\\
&\leq \frac{|V_\ell|^2}{2} + \frac{d(N-\ell-1)-2}{2}\log \left( 1-\frac{|V_\ell|^2}{dN}     \right),
\end{aligned}
$$
and since the function $\alpha(z) = z/2 + [(d(N-\ell-1)-2)/2]\log ( 1-z/dN )$ has a maximum for $z=d(\ell+1) + 2$, we deduce
\begin{equation}\label{eq:logh}
\log h(V_\ell) \leq \frac{d(\ell+1)+2}{2} + \frac{d(N-\ell-1)-2}{2}\log \left( 1-\frac{d(\ell+1)+2}{dN}     \right),
\end{equation}
for $d\ell \le d(N-1)-3$.

On the other hand, for the quantity $A$, we have
\begin{equation}\label{eq:logA}
\begin{aligned}
\log\left[ \left( 1 -\frac{d(\ell+1)+2}{dN}  \right)A  \right] &= \sum_{j=1}^{(d(\ell+1)+2)/2} \log \left( 1 -\frac{2j}{dN}  \right) \\
&\le \int_0^{(d(\ell+1)+2)/2} \log \left( 1 -\frac{2x}{dN}  \right)\, dx\\
&= - \frac{d(N-\ell-1)-2}{2}\log\left( 1 -\frac{d(\ell+1)+2}{dN}  \right) - \frac{d(\ell+1)+2}{2},
\end{aligned}
\end{equation}
again for $d\ell \le d(N-1)-3$.

Combining \eqref{eq:logh} and \eqref{eq:logA} we obtain
$$
\log\left[ h(V_\ell)\left( 1 -\frac{d(\ell+1)+2}{dN}  \right)A  \right]\le 0
$$
and then
$$
 \left( 1 -\frac{d(\ell+1)+2}{dN}  \right) \frac{\gamma^N_\ell}{\gamma^{\otimes \ell}} \leq \frac{(N-\ell)^{\frac{d}{2}}}{N^{\frac{d}{2}}},
$$
which implies
$$
\begin{aligned}
\frac{\gamma^N_\ell}{\gamma^{\otimes \ell}}-1 
&\le \frac{d(\ell+1)+2}{dN-d(\ell+1)-2}.
\end{aligned}
$$
Plugging this expression in \eqref{eq:L1} we deduce
$$
{\norm{\gamma^N_\ell - \gamma^{\otimes \ell}}}_{L^1} \leq \frac{2d(\ell+1)+4}{dN-d(\ell+1)-2},
$$
which is valid if $\ell$ is even. 

Finally, if $\ell$ is odd, then $\ell+1$ is even and we shall write 
$$
{\norm{\gamma^N_\ell - \gamma^{\otimes \ell}}}_{L^1}\leq {\norm{\gamma^N_{\ell+1} - \gamma^{\otimes \ell+1}}}_{L^1} \le 2\,\frac{d(\ell+2)+2}{dN-d(\ell+2)-2}
$$ 
for $d\ell \le d(N - 2)-3$, which concludes the proof.

\end{proof}


\section{Chaotic sequences in Kac's sense}\label{sec:chaotic}

In this section, inpired by the work \cite{CCLLV}, we shall construct a chaotic sequence of probability measures on the Boltzmann's sphere based on the tensorization of some suitable probability $f$ on $\R^d$ and conditioning to $\SS^N_\BB$. We shall give a quantitative rate of the chaos convergence, proving a precise version of point $(i)$ in Theorem~\ref{thm:intro-FN}.

First of all, we define 
\begin{equation}\label{eq:def-Z'N}
Z_N (f; r,z) = \int_{\SS^N(r,z)} f^{\otimes N} \, d\gamma^N_{r,z},\quad \text{and}\quad
Z'_N (f; r,z) = \int_{\SS^N(r,z)} \frac{f^{\otimes N}}{\gamma^{\otimes N}} \, d\gamma^N_{r,z},
\end{equation}
for $r\in\R_+$ and $z\in\R^d$, and we shall investigate their asymptotic behaviour.
We remark that, since $\gamma^{\otimes N}$ is constant on $\SS^N(r,z)$, we have
$$
Z'_N (f; r,z) = \frac{Z_N (f; r,z)}{\gamma^{\otimes N}}
$$
and we shall study in the sequel only the behaviour of $Z'_N(f; r,z)$.


Define the space $\PPP_{k}(\R^d):=\{f\in\PPP(\R^d);\; M_k(f):=\int |v|^k f\,dv <\infty\}$, for some $k\ge1$. Let us  
consider $f\in\PPP_{ 6}(\R^d)\cap L^p(\R^d)$, for some $p>1$, a probability measure that verifies
\begin{equation}\label{eq:hyp-f}
\begin{aligned}
&\int_{\R^d}vf(v)\,dv =0 \,,\qquad\qquad \qquad
\int_{\R^d}v\otimes v\, f(v)\,dv =\EE I_d ,
  \\
&\int_{\R^d}\abs{v}^2\,f(v)\,dv =d\EE=E \,,\qquad
\int_{\R^d}(\abs{v}^2-E)^2 f(v)\,dv =\Sigma^2 ,
\end{aligned}
\end{equation}
where $I_d$ is the $d$-dimensional identity matrix.

\subsection{Preliminary results}
Before study the asymptotic behaviour of $Z'_N$, we shall state some preliminary results that will be useful in the sequel.


Consider $(\VV_j)_{j\in\N^\ast}$ a sequence of random variables i.i.d.\ in $\R^d$ with 
same law $f$, then the law of the couple $(\VV_1,\VV_1^2)$ is
\begin{equation}\label{eq:h}
h(v,u) =   f(v) \, \delta_{u = |v|^2} \in \PPP(\R^d \times \R_+).
\end{equation}

Moreover, we have the following lemma.
\begin{lemma}
The random variable $S_N := \sum_{j=1}^N (\VV_j,|\VV_j|^2)$ has law $s^N(z,u)\,dzdu$ with
\begin{equation*}
s^N(z,u) := \frac{\left| \SS^N(\sqrt{u},z)\right|}{2\left(u-\frac{|z|}{N^2} \right)^{1/2} N^{d/2}} \, Z_N(f;\sqrt{u},z),
\end{equation*}
where $z\in\R^d$ and $u\in\R_+$.
\end{lemma}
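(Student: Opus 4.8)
The plan is to compute the law of the sum $S_N = \sum_{j=1}^N (\VV_j, |\VV_j|^2) \in \R^d \times \R_+$ by relating it to integration over the spheres $\SS^N(\sqrt{u},z)$, following the same disintegration-of-Lebesgue-measure idea used in the proof of Lemma~\ref{lem:gammaNl}. Since the $\VV_j$ are i.i.d. with law $f$, the law of $(\VV_1,\dots,\VV_N)$ on $\R^{dN}$ is $f^{\otimes N}(V)\,dV$, and pushing forward by the map $V \mapsto (\bar V_N, |V|^2) = \big(\sum v_j, \sum |v_j|^2\big)$ gives the law of $S_N$. So for a test function $\psi \in C_b(\R^d \times \R_+)$ I would write
$$
\E\big[\psi(S_N)\big] = \int_{\R^{dN}} \psi\Big(\textstyle\sum_j v_j,\ \sum_j |v_j|^2\Big)\, f^{\otimes N}(V)\, dV
$$
and then slice the Lebesgue measure $dV$ according to the two conserved quantities.

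The key step is the coarea/disintegration formula: writing $\Phi(V) = (|V|^2, \bar V_N) \in \R_+ \times \R^d$, one has $dV = J(V)^{-1}\, d\sigma_{\SS^N(\sqrt u, z)}(V)\, du\, dz$ on the level set $\Phi(V) = (u,z)$, where $J(V)$ is the appropriate Jacobian factor (the square root of the Gram determinant of $\nabla(|V|^2)$ and the $d$ gradients $\nabla \bar V_{N,\alpha}$). A clean way to get this constant is to use the change of variables from Lemma~\ref{lem:change}: in the $u$-coordinates, $|V|^2 = \sum_{k=1}^N |u_k|^2$ and $\bar V_N = \sqrt N\, u_N$, so fixing $|V|^2 = u$ and $\bar V_N = z$ amounts to fixing $u_N = z/\sqrt N$ and restricting $(u_1,\dots,u_{N-1})$ to the sphere of radius $(u - |z|^2/N)^{1/2}$ in $\R^{d(N-1)}$. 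Since the Jacobian of the $V \to u$ change is one, integrating out $u_N$ over the thin shell $\{|z|^2/N \le |u_N|^2 \cdot N \le \dots\}$ and the sphere in the remaining variables produces exactly the factor $\big[2(u - |z|^2/N)^{1/2} N^{d/2}\big]^{-1}$ relating $dV$ to $d\sigma\, du\, dz$ — the $N^{d/2}$ coming from $dz = N^{d/2}\, du_N$ and the $2(u-|z|^2/N)^{1/2}$ from differentiating the radial shell. (Here one should be careful that the sum of squares constraint is $|v_j|^2$ summed, i.e. the radius-squared variable is $u$, and the stated normalization has $|z|$ rather than $|z|^2$, which I would double-check against their conventions.)

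Assembling, $\E[\psi(S_N)] = \int_{\R^d}\int_{\R_+} \psi(z,u)\, \Big(\int_{\SS^N(\sqrt u, z)} f^{\otimes N}\, d\sigma\Big) \frac{du\, dz}{2(u-|z|^2/N)^{1/2} N^{d/2}}$, and then rewriting the inner surface integral in terms of the \emph{uniform probability} $\gamma^N_{\sqrt u, z}$ via $\int f^{\otimes N}\, d\sigma = |\SS^N(\sqrt u, z)|\, Z_N(f;\sqrt u, z)$ (by the definition \eqref{eq:def-Z'N} of $Z_N$) yields precisely the claimed density $s^N(z,u)$.

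The main obstacle I anticipate is getting the Jacobian constant exactly right — in particular tracking the factor of $2$ from the radial derivative, the power $N^{d/2}$ from the linear change $\bar V_N = \sqrt N u_N$, and reconciling whether the problem's $(u - |z|/N^2)^{1/2}$ is a typo for $(u - |z|^2/N)^{1/2}$ (which is what the geometry of $\SS^N(\sqrt u, z)$, cf. \eqref{eq:mesureSN}, forces). Everything else is a routine application of Fubini and the change of variables already established in Lemma~\ref{lem:change}; the conceptual content is just that conditioning i.i.d.\ variables on their empirical energy and momentum produces the uniform measure on the corresponding sphere, weighted by the density $Z_N(f;\cdot,\cdot)$.
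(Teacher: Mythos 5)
Your proposal is correct and follows essentially the same route as the paper: test against $\varphi\in C_b(\R^d\times\R_+)$, pass to the $u$-coordinates of Lemma~\ref{lem:change} (Jacobian $1$), fix $u_N$ and the radius $r=\bigl(\sum_{j\le N-1}|u_j|^2\bigr)^{1/2}$, and then change variables to $z=\sqrt{N}u_N$, $w=r^2+|u_N|^2$, picking up $dz\,dw = 2r\,N^{d/2}\,du_N\,dr$ so that the density is $|\SS^N(\sqrt w,z)|\,Z_N(f;\sqrt w,z)\big/\bigl(2(w-|z|^2/N)^{1/2}N^{d/2}\bigr)$. You were also right to flag the exponent: the statement's $\bigl(u-\tfrac{|z|}{N^2}\bigr)^{1/2}$ is a typographical slip for $\bigl(u-\tfrac{|z|^2}{N}\bigr)^{1/2}$, as both the sphere radius $\sqrt{w-|z|^2/N}$ appearing elsewhere in the paper's own computation and the Jacobian calculation force.
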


\begin{proof}
Let $\varphi\in C_b(\R^d \times \R_+)$, with the change of coordinates \eqref{eq:change1}-\eqref{eq:change2} $v\to u$, we have
$$
\begin{aligned}
\E\left[ \varphi\left( \sum_{j=1}^N \VV_j,\sum_{j=1}^N |\VV_j|^2 \right)\right]
&= \int_{\R^{dN}} \varphi\left( \sum_{j=1}^N v_j,\sum_{j=1}^N |v_j|^2 \right)\, f^{\otimes N}\, dV\\
&=\int_{\R^{dN}} \varphi\left( \sqrt{N}u_N,\sum_{j=1}^N |u_j|^2 \right)\, f^{\otimes N}\, dU.
\end{aligned}
$$
Denoting $r^2=\sum_{j=1}^{N-1} |u_j|^2$ and splitting the integral, the last equation is equal to
$$
\int_{0}^{\infty}\!\!\!\int_{\R^d} \varphi(\sqrt{N}u_N,r^2 + |u_N|^2) \left\{ \left|\Sp^{d(N-1)-1}(r)\right| \int_{\Sp^{d(N-1)-1}(r)} f^{\otimes N} \, d\sigma^{d(N-1)-1}_{r} \right\} du_N\,dr
$$
where $\sigma^{n-1}_R$ is the uniform probability measure on $\Sp^{n-1}(R)$. Making the change of coordinates $w=r^2 + |u_N|^2$ and $z=\sqrt{N}u_N$, we obtain
$$
\begin{aligned}
&\int_{0}^{\infty}\!\!\!\int_{\R^d} \varphi(z,w) \left\{ \frac{\left|\Sp^{d(N-1)-1}\left(\sqrt{w-
\frac{|z|^2}{N}}\right)\right|}{2\left(w-\frac{|z|}{N^2} \right)^{1/2}N^{d/2}} \int_{\Sp^{d(N-1)-1}\left(\sqrt{w-\frac{|z|^2}{N}}\right)} f^{\otimes N} \, d\sigma^{d(N-1)-1}_{\sqrt{w-|z|^2/N}} \right\} dz\,dw \\
&= \int_{0}^{\infty}\!\!\!\int_{\R^d} \varphi(z,w) \left\{ \frac{\left| \SS^N(\sqrt{w},z)\right|}{ 2\left(w-\frac{|z|}{N^2} \right)^{1/2} N^{d/2} } \,  Z_N(f;\sqrt{w},z) \right\} dz\,dw,
\end{aligned}
$$
from which we conclude.
\end{proof}

Since $S_N$ is the summation of independent random variables, its law's density is also given by
\begin{equation}\label{eq:sN}
s^N(z,u) = h^{\ast N} (z,u),
\end{equation}
and we deduce from the lemma above
\begin{equation}\label{eq:ZN-hN}
Z_N(f;\sqrt{u},z) = \frac{2\left(u-\frac{|z|}{N^2} \right)^{1/2} \, N^{d/2} \, h^{(\ast N)}(z,u) }{\left| \SS^N(\sqrt{u},z)\right|} \,.
\end{equation}

\begin{lemma}\label{lem:h-moment}
If $f\in \PPP_{2k}(\R^d)$ then $h\in \PPP_{k}(\R^{d+1})$.
\end{lemma}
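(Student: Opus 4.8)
The plan is a direct moment computation. By definition $h(v,u)=f(v)\,\delta_{u=|v|^2}$ is an element of $\PPP(\R^d\times\R_+)\subset\PPP(\R^{d+1})$, so integrating out the variable $u$ against the Dirac mass, and using that $|(v,u)|^2=|v|^2+u^2$ on $\R^{d+1}$ while $u=|v|^2$ on $\supp h$, one gets
$$
M_k(h)=\int_{\R^d\times\R_+}|(v,u)|^k\,h(v,u)\,dv\,du=\int_{\R^d}\bigl(|v|^2+|v|^4\bigr)^{k/2}\,f(v)\,dv.
$$
Thus the lemma reduces to showing that the right-hand side is finite.

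It then remains to dominate the integrand by a quantity integrable against $f$. Since $|v|^2+|v|^4\le 1+2|v|^2+|v|^4=(1+|v|^2)^2$, and since $k\ge 1$, the elementary convexity inequality $(a+b)^k\le 2^{k-1}(a^k+b^k)$ gives
$$
\bigl(|v|^2+|v|^4\bigr)^{k/2}\le(1+|v|^2)^k\le 2^{k-1}\bigl(1+|v|^{2k}\bigr).
$$
Integrating against $f$ and using that $f$ is a probability density with $M_{2k}(f)<\infty$ by hypothesis, we conclude
$$
M_k(h)\le 2^{k-1}\bigl(1+M_{2k}(f)\bigr)<\infty,
$$
i.e. $h\in\PPP_k(\R^{d+1})$.

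I do not expect any genuine obstacle here: the mechanism is simply that the support constraint $u=|v|^2$ trades a moment of order $k$ in the $u$-variable for a moment of order $2k$ in $v$. The only mild point is the passage from $|v|^2+|v|^4$ to a polynomial in $|v|^{2k}$; if one prefers to avoid the convexity bound, one can instead split according to $|v|\le 1$ (where $|v|^2+|v|^4\le 2$) and $|v|\ge 1$ (where $|v|^2+|v|^4\le 2|v|^4\le 2|v|^{2k}$), reaching the same conclusion.
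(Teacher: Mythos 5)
Your proof is correct and follows essentially the same route as the paper's: substitute $u=|v|^2$ to reduce the $k$-th moment of $h$ to the integral of $(|v|^2+|v|^4)^{k/2}$ against $f$, then bound that integrand by a polynomial in $|v|$ of degree $2k$. The only cosmetic difference is that you bound it by $2^{k-1}(1+|v|^{2k})$ while the paper uses $C_k(|v|^k+|v|^{2k})$; both yield finiteness under $M_{2k}(f)<\infty$.
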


\begin{proof}
Let $y=(v,u) \in \R^{d+1}$ with $v\in\R^d$ and $u\in\R$. Then we have
\begin{equation*}
\begin{aligned}
\int_{\R^{d+1}} \abs{y}^{k} \, h(y) \,dy &=\int_{\R^{d+1}}\left( |v|^2 + |u|^2 \right)^{k/2} f(v) \delta_{u=|v|^2} \, dv\, du   \\
&\leq C_{k}\left( \int_{\R^{d}} \abs{v}^k\, f(v) \,dv + 
 \int_{\R^{d}} \abs{v}^{2k}\, f(v) \, dz \right),
\end{aligned}
\end{equation*}
from which we conclude.
\end{proof}


\begin{lemma}\label{lem:h-Lq}
Suppose $f\in L^{p}(\R^d)$ for some $p>1$. Then $h^{\ast 2}\in L^q(\R^{d+1})$ 
if
\begin{enumerate}[(i)]
\item for $d=1$: $1<q<p$ and $q<\frac{2p}{p+1}$

\item for $d=2$: $q\leq p$

\item for $d\geq 3$: if $f\in L_s(\R^d)$ ($s>0$), for $q<p$ and
$$
q = \frac{(d-2)(p-1) + sp}{(d-2)(p-1) + s} > 1.
$$
\end{enumerate}
\end{lemma}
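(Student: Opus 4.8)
I would prove this by deriving an explicit formula for $h^{\ast 2}$, reducing the claim to the finiteness of a single weighted double integral, and then handling the three dimensional regimes by elementary H\"older estimates. Throughout I use that $f$, being a probability density, lies in $L^1(\R^d)\cap L^p(\R^d)$, hence in $L^r(\R^d)$ for every $r\in[1,p]$. \textbf{Step 1 (explicit formula).} By \eqref{eq:h}, $h$ is the law of $(\VV_1,|\VV_1|^2)$, so $h^{\ast 2}$ is the law of $(\VV_1+\VV_2,|\VV_1|^2+|\VV_2|^2)$, i.e.\ the push-forward of $f(v_1)f(v_2)\,dv_1\,dv_2$ under $(v_1,v_2)\mapsto(z,w):=(v_1+v_2,|v_1|^2+|v_2|^2)$. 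Writing $v_1=\tfrac z2+y$, $v_2=\tfrac z2-y$ (a change of variables with unit Jacobian, cf.\ \eqref{eq:change1}), passing to polar coordinates $y=\rho\,\omega$ and substituting $w=\tfrac{|z|^2}{2}+2\rho^2$, one gets for $d\ge 2$
\[
h^{\ast 2}(z,w)=c_d\,(2w-|z|^2)_+^{\frac{d-2}{2}}\int_{\Sp^{d-1}}f\Bigl(\tfrac z2+\rho\,\omega\Bigr)\,f\Bigl(\tfrac z2-\rho\,\omega\Bigr)\,d\omega,\qquad \rho:=\tfrac12\sqrt{(2w-|z|^2)_+},
\]
and for $d=1$ the identity $h^{\ast 2}(z,w)=f(v_+)\,f(v_-)\,(2w-z^2)_+^{-1/2}$ with $v_\pm=\tfrac12\bigl(z\pm\sqrt{(2w-z^2)_+}\bigr)$; in particular $h^{\ast 2}$ is absolutely continuous in all cases.

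\textbf{Step 2 (reduction).} Computing $\|h^{\ast 2}\|_{L^q(\R^{d+1})}^q$ with the same change of variables gives, exactly,
\[
\|h^{\ast 2}\|_{L^q}^q=C_{d,q}\int_{\R^d}\!\int_0^\infty \rho^{q(d-2)+1}\Bigl(\int_{\Sp^{d-1}}f(\zeta+\rho\omega)\,f(\zeta-\rho\omega)\,d\omega\Bigr)^{\!q}d\rho\,d\zeta .
\]
Using Jensen's inequality on $\Sp^{d-1}$ (w.r.t.\ the normalised measure) to move the $q$-th power inside, then Tonelli and the substitution $\eta=\zeta-\rho\omega$ (so that $\int_{\R^d}f(\zeta+\rho\omega)^q f(\zeta-\rho\omega)^q\,d\zeta=(f^q\star f^q)(2\rho\omega)$, the autocorrelation of $f^q$), and finally rewriting the $\rho$-integral as an integral over $\R^d$ in polar coordinates ($x=2\rho\omega$), one arrives at the key bound
\[
\|h^{\ast 2}\|_{L^q}^q\ \le\ C_{d,q}\int_{\R^d}(f^q\star f^q)(x)\,|x|^{(q-1)(d-2)}\,dx\ =\ C_{d,q}\iint_{\R^d\times\R^d}f(v_1)^q f(v_2)^q\,|v_1-v_2|^{(q-1)(d-2)}\,dv_1\,dv_2 .
\]
(For $d=1$, $\Sp^0$ consists of two points, Jensen is an equality, and this reproduces exactly the identity of Step 1, so nothing is lost.) It remains to bound $J_q:=\iint f(v_1)^q f(v_2)^q|v_1-v_2|^{(q-1)(d-2)}\,dv_1\,dv_2$.

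\textbf{Step 3 (the three regimes).} If $d=2$ the kernel is $\equiv 1$ and $J_q=\|f\|_{L^q}^{2q}<\infty$ for $1\le q\le p$. If $d=1$ the kernel $|v_1-v_2|^{-(q-1)}$ is locally integrable iff $q<2$; one splits off $\{|v_1-v_2|>1\}$ (where the contribution is $\le\|f\|_{L^q}^{2q}$, needing $q\le p$) and, on $\{|v_1-v_2|\le1\}$, bounds the inner integral $\int_{|v_2-v_1|\le1}f(v_2)^q|v_1-v_2|^{-(q-1)}\,dv_2\le C\,\|f\|_{L^{aq}}^{q}$ by H\"older, which requires $a>\tfrac1{2-q}$ together with $aq\le p$; such an $a$ exists iff $\tfrac1{2-q}<\tfrac pq$, i.e.\ $q<\tfrac{2p}{p+1}$ (a condition which already forces $q<2$ and $q<p$). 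If $d\ge 3$, set $m:=(q-1)(d-2)\ge0$; from $|v_1-v_2|^m\le 2^m(|v_1|^m+|v_2|^m)$ we get $J_q\lesssim\|f\|_{L^q}^q\int_{\R^d}f(v)^q\langle v\rangle^m\,dv$, the part $\{|v|\le1\}$ being harmless. For the tail, put $\lambda:=\tfrac{q-1}{p-1}\in(0,1)$; then $q-p\lambda=1-\lambda=\tfrac{p-q}{p-1}$, so splitting $f^q\langle v\rangle^m=f^{p\lambda}\cdot\bigl(f\,\langle v\rangle^{m/(1-\lambda)}\bigr)^{1-\lambda}$ and applying H\"older with exponents $\tfrac1\lambda,\tfrac1{1-\lambda}$ gives $\int_{\R^d}f^q\langle v\rangle^m\le\|f\|_{L^p}^{p\lambda}\bigl(\int_{\R^d}f\,\langle v\rangle^{s}\,dv\bigr)^{1-\lambda}$ with $s=\tfrac{m}{1-\lambda}=\tfrac{(q-1)(d-2)(p-1)}{p-q}$, and this value of $s$ is precisely the one for which $q=\tfrac{(d-2)(p-1)+sp}{(d-2)(p-1)+s}$. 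Hence $J_q<\infty$ whenever $f\in L^p$ also has finite moment of order $s$ --- which is what $f\in L_s$ denotes --- and one checks directly that this $q$ then satisfies $1<q<p$.

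\textbf{Expected main obstacle.} The delicate point is Step 2: one must carry out the successive changes of variables carefully (keeping track of all Jacobians and of the degenerate locus $\{2w=|z|^2\}$), verify that Jensen $+$ Tonelli $+$ the polar re-parametrisation is legitimate (all integrands are nonnegative, so Tonelli applies, but the accounting of the powers of $\rho$ and of $|x|$ is where slips occur), and, for $d\ge3$, recognise that the weighted integrability $\int f^q\langle v\rangle^{(q-1)(d-2)}<\infty$ is exactly what the hypothesis $f\in L^p\cap L_s$ supplies for the precise exponent $q$. Once this reduction is secured, Step 3 uses nothing beyond Jensen, H\"older and the local integrability of $|x|^{-\alpha}$; in particular no spherical maximal function or Hardy--Littlewood--Sobolev inequality is needed.
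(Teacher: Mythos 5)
Your proof is correct and follows essentially the same route as the paper's: after deriving the same reduction $\|h^{\ast 2}\|_{L^q}^q\lesssim\iint f(v_1)^q f(v_2)^q|v_1-v_2|^{(q-1)(d-2)}\,dv_1\,dv_2$ (the paper obtains it via H\"older on the $\delta$-constrained integral, you via the push-forward formula plus Jensen on $\Sp^{d-1}$, which is equivalent), the three cases are handled by the same elementary splittings and H\"older estimates. The only cosmetic difference is that for $d\ge 3$ you carry out inline the weighted-$L^q$ estimate that the paper delegates to Lemma~\ref{lem:aux}.
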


\begin{proof}
We compute first $h^{\ast 2}(v,u)$ with $v,v' \in \R^d$ and $u,u' \in \R$.
\begin{equation*}
\begin{aligned}
h^{\ast 2}(v,u) &= \int_{\R^d}\!\int_{\R} h(v-v',u-u')\, h(v',u') \,du' dv'\\
&= \int_{\R^d} f(v-v')\, f(v') \left\{\int_{\R} \delta_{u-u'=\abs{v-v'}^2} \delta_{u'=\abs{v'}^2} du'\right\} dv' \\
&= \int_{\R^d} f(v-v')\, f(v') \,\delta_{u=\abs{v-v'}^2 - \abs{v'}^2} \,dv'.
\end{aligned}
\end{equation*}

Moreover, we have
\begin{equation*}
\begin{aligned}
\delta_{u=\abs{v-v'}^2 - \abs{v'}^2} =\delta_{u=2\Abs{\frac{v}{2}-v'}^2 + \frac{\abs{v}^2}{2}}.
\end{aligned}
\end{equation*}

Then we can compute the $L^q$ norm of $h^{\ast 2}$,
\begin{equation}\label{eq:h2a}
\begin{aligned}
&\int_{\R^d}\!\int_{\R} {\Abs{h^{\ast 2}(v,u)}}^{q} \, dv\,du \\
&\qquad= \int_{\R^d}\!\int_{\R} 
 {\Abs{ \int_{\R^d} f(v-v')\, f(v') \, \delta_{u=2\Abs{\frac{v}{2}-v'}^2 + \frac{\abs{v}^2}{2}} \,dv'}}^{q}dv\,du\\
&\qquad \leq \int_{\R^d}\!\int_{\R} 
 {\Abs{ \left(\int_{\R^d} \delta_{\Abs{\frac{v}{2}-v'}^2 = \frac{u}{2} - \frac{\abs{v}^2}{4}} \,dv'\right)^{(q-1)/q} 
 \left(\int_{\R^d} f(v-v')^q \, f(v')^q \,\delta_{\Abs{\frac{v}{2}-v'}^2 = \frac{u}{2} - \frac{\abs{v}^2}{4}}\,dv'\right)^{1/q}}}^{q}dv\,du. 
\end{aligned}
\end{equation}
where we used Holder's inequality. 

We look to the integral over $\delta$, using $w=\frac{v}{2}-v'$
\begin{equation*}
\begin{aligned}
\int_{\R^d} \delta_{\Abs{w}^2 = \frac{u}{2} - \frac{\abs{v}^2}{4}} \,dw
 = \abs{\Sp^{d-1}} \int_\R \delta_{r^2 = \frac{u}{2} - \frac{\abs{v}^2}{4}} \,r^{d-1}\,dr
\end{aligned}
\end{equation*}
where we changed to polar coordinates and then, with $z=r^2$
\begin{equation}\label{eq:delta}
\begin{aligned}
\int_{\R^d} \delta_{\Abs{w}^2 = \frac{u}{2} - \frac{\abs{v}^2}{4}} \,dw
& = \frac{\abs{\Sp^{d-1}}}{2} \int_\R \delta_{z = \frac{u}{2} - \frac{\abs{v}^2}{4}}\, z^{(d-2)/2}\,dz \\
&= \frac{\abs{\Sp^{d-1}}}{2} \left(  \frac{u}{2} - \frac{\abs{v}^2}{4}   \right)^{(d-2)/2}.
\end{aligned}
\end{equation}
Therefore we obtain, plugging \eqref{eq:delta} in \eqref{eq:h2a} and using Fubbini,
\begin{equation*}
\begin{aligned}
&\int_{\R^d}\!\int_{\R} {\Abs{h^{\ast 2}(v,u)}}^{q} \, dv\,du \\
&\qquad \leq \int_{\R^d}\!\int_{\R^d} f(v-v')^q \, f(v')^q  
 \Bigg\{\int_{\R} \left[ \frac{\abs{\Sp^{d-1}}}{2} \left(  \frac{u}{2} - \frac{\abs{v}^2}{4}   \right)^{(d-2)/2}  \right]^{q-1}\, \delta_{u=2\Abs{\frac{v}{2}-v'}^2 + \frac{\abs{v}^2}{2}}\,du\Bigg\}dv\,dv'  \\
 &\qquad=  \frac{{\abs{\Sp^{d-1}}}^{q-1}}{2^{q-1}} \int_{\R^d}\!\int_{\R^d} \,{\Abs{\frac{v}{2}-v'}}^{(d-2)(q-1)} \, f(v-v')^q \, f(v')^q  \, dv\,dv' \quad =:A
\end{aligned}
\end{equation*}

Now we have the cases $d=1$, $d=2$ and $d\geq 3$:

\subsubsection*{(i) $d=1$}
Splitting the expression, we have
\begin{equation*}
\begin{aligned}
A &\leq \int_{\Abs{\frac{v}{2}-v'}\leq1} \frac{f(v-v')^q \, f(v')^q}{\Abs{\frac{v}{2}-v'}^{q-1}} \,   dv\,dv'  + \int_{\R^{d}}\!\int_{\R^{d}} f(v-v')^q \, f(v')^q  dv\,dv' \\
& =: T_1 + T_2. 
\end{aligned}
\end{equation*}
For the last estimate we have $T_2 \leq {\norm{f}}_{L^q}^{2q} \leq {\norm{f}}_{L^p}^{2q} $ (because $q<p$ and $f$ is a probability measure), and for the first term we use Holder's inequality
\begin{equation*}
\begin{aligned}
 T_1 &\leq  \left( \int_{\Abs{\frac{v}{2}-v'}\leq1} \frac{1}{\Abs{\frac{v}{2}-v'}^{(q-1)p/(p-q)}}\,dv\,dv'  \right)^{(p-q)/p} \left( \int_{\Abs{\frac{v}{2}-v'}\leq1} f(v-v')^p \, f(v')^p \,dv\,dv'\right)^{q/p}.
\end{aligned}
\end{equation*}
Then, the first integral converges if $(q-1)p/(p-q)<1$, which give us $T_1\leq C {\norm{f}}_{L^p}^{2q}$ if
$$
q< \frac{2p}{p+1}.
$$

\subsubsection*{(ii) $d=2$}
In this case we have
\begin{align*}
A &\leq \frac{{\abs{\Sp^{1}}}^{q-1}}{2^{q-1}} \int_{\R^{d}}\!\int_{\R^{d}} f(v-v')^q \, f(v')^q  dv\,dv' \\
&=\frac{{\abs{\Sp^{1}}}^{q-1}}{2^{q-1}} {\norm{f}}_{L^q}^{2q}\leq \frac{{\abs{\Sp^{1}}}^{q-1}}{2^{q-1}} {\norm{f}}_{L^p}^{2q}. 
\end{align*}

\subsubsection*{(iii) $d\geq 3$}
We have, using $w=v-v'$ and $u = v'$
\begin{align*}
A &= \frac{{\abs{\Sp^{d-1}}}^{q-1}}{2^{q-1}} \int_{\R^d}\!\int_{\R^d} \,{\Abs{\frac{v}{2}-v'}}^{(d-2)(q-1)} \, f(v-v')^q \, f(v')^q  dv\,dv'
\\
&= \frac{{\abs{\Sp^{d-1}}}^{q-1}}{2^{q-1}} \frac{1}{2^{(d-2)(q-1)}}\int_{\R^d}\!\int_{\R^d} \,{\abs{w-u}}^{(d-2)(q-1)} \, f(w)^q \, f(u)^q  dw\,du\\
&\leq\frac{{\abs{\Sp^{d-1}}}^{q-1}}{2^{(d-1)(q-1)}} \Bigg\{  2C   
\left( \int_{\R^d} {\abs{w}}^{(d-2)(q-1)} \, f(w)^q \, dw \right)\left( \int_{\R^d} f(u)^q \, du \right) \Bigg\}\\
&\leq C {\norm{f}}_{L^q}^{q} {\norm{f}}_{L^q_m}^{q}
\end{align*}
where we have used ${\abs{w-u}}^{(d-2)(q-1)}\leq C \left( {\abs{w}}^{(d-2)(q-1)} + {\abs{u}}^{(d-2)(q-1)}    \right)$ and $m=(d-2)(q-1)$. 

Finally, we have ${\norm{f}}_{L^q}^{q}\leq {\norm{f}}_{L^p}^{q}$ and with the hypothesis $f\in L^p\cap L_s$, we have ${\norm{f}}_{L^q_m}< \infty$ for $m=s(p-q)/(p-1)$ and $q<p$ (see Lemma \ref{lem:aux} in Appendix~\ref{ap:reg}), more precisely for
$$
q = \frac{(d-2)(p-1) + sp}{(d-2)(p-1) + s} > 1.
$$

\end{proof}

\subsection{Asymptotic behaviour of $Z'_N$}
In this section we shall study the behaviour of $Z'_N$ when $N$ goes to infinity. First of all, let us state a version of the Central Limit Theorem, also known as Berry-Esseen type theorem, which is the main ingredient of the proof of the asymptotic of $Z'_N$ in 
Theorem~\ref{thm:ZN}. 
The proof of the CLT presented here is a slightly adaptation of \cite[Theorem 4.6]{HaurayMischler} (see also \cite[Theorem 27]{CCLLV}).

\begin{thm}[Central Limit Theorem]\label{thm:clt}
Let $g\in \PPP_{3}(\R^D)$ such that, for some integer $k\ge 1$, we have $g^{\ast k} \in L^p(\R^D)$ for some $p>1$. Moreover, assume that 
\begin{equation}\label{eq:hyp-tcl}
\int_{\R^D} x\, g(x) \, dx =0 , \qquad \int_{\R^D} (x\otimes x)\, g(x)\, dx =I_D , \qquad
\int_{\R^D} |x|^3 g(x)\, dx \leq C_3.
\end{equation}
Then there exists a constant $C=C(D,p,\norm{g^{\ast k}}_{L^p})>0$ and $N(k,p)$ such that for all $N>N(k,p)$ we have 
\begin{equation*}\label{eq:clt}
\norm{g_N - \gamma}_{L^\infty} = \sup_{x\in\R^D} |g_N(x) - \gamma(x)| \leq \frac{C}{\sqrt{N}},
\end{equation*}
where $g_N(x) = N^{D/2} g^{\ast N}(\sqrt{N}x)$ is the normalized $N$-convolution power of $g$.
\end{thm}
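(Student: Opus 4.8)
The plan is to run a Fourier-analytic (Berry--Esseen) argument, writing $\widehat g$ for the Fourier transform of $g$ and recalling $\widehat\gamma(\xi)=e^{-|\xi|^2/2}$, so that the normalized convolution power satisfies $\widehat{g_N}(\xi)=\widehat g(\xi/\sqrt N)^N$. The first task is to extract from the single qualitative hypothesis $g^{\ast k}\in L^p$ the two analytic inputs that the textbook i.i.d.\ CLT does not supply. Applying Young's inequality repeatedly to $g^{\ast k}\ast g^{\ast k}$ (using $\|g^{\ast k}\|_{L^1}=1$) upgrades the Lebesgue exponent and gives $g^{\ast m}\in L^2(\R^D)$ for some $m=m(k,p)$; by Plancherel $\widehat g^{\,m}\in L^2$, hence $\widehat g^{\,n_0}\in L^1$ with $n_0=2m$. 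Moreover $g^{\ast m}\in L^2$ forces $|\widehat g(u)|<1$ for every $u\neq0$: if $|\widehat g(u_0)|=1$ then $|\widehat{g^{\ast m}}(u_0)|=1=\int g^{\ast m}$, forcing $g^{\ast m}$ to be carried by a Lebesgue-null union of hyperplanes, contradicting $0\neq g^{\ast m}\in L^2$. By Riemann--Lebesgue $|\widehat g(u)|^{n_0}\to0$, so $|\widehat g(u)|\to0$ and $\eta_\delta:=\sup_{|u|\ge\delta}|\widehat g(u)|<1$ for each $\delta>0$. In particular, for $N\ge n_0$, $|\widehat{g_N}(\xi)|\le|\widehat g(\xi/\sqrt N)|^{n_0}$, so $\widehat{g_N}-\widehat\gamma\in L^1$, Fourier inversion is legitimate, and
\[
\|g_N-\gamma\|_{L^\infty}\le\frac{N^{D/2}}{(2\pi)^D}\int_{\R^D}\bigl|\widehat g(u)^N-e^{-N|u|^2/2}\bigr|\,du.
\]

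Next I split this integral at $|u|=\delta$. On the outer region $|u|\ge\delta$ the Gaussian part gives $N^{D/2}\int_{|u|\ge\delta}e^{-N|u|^2/2}\,du\le(4\pi)^{D/2}e^{-N\delta^2/4}$, while $N^{D/2}\int_{|u|\ge\delta}|\widehat g(u)|^N\,du\le N^{D/2}\eta_\delta^{\,N-n_0}\,\|\widehat g^{\,n_0}\|_{L^1}$; both are exponentially small in $N$, hence bounded by $C/\sqrt N$ once $N$ exceeds a threshold depending on $k,p$. On the inner region $|u|\le\delta$ I invoke the moment hypotheses: since $g$ has mean $0$, covariance $I_D$ and $\int|x|^3g\le C_3$, a third-order Taylor expansion yields $\widehat g(u)=1-|u|^2/2+\theta(u)$ with $|\theta(u)|\le C_3|u|^3/6$; choosing $\delta$ small (depending only on $C_3$) one may write $\log\widehat g(u)=-|u|^2/2+\widetilde\theta(u)$ with $|\widetilde\theta(u)|\le C|u|^3\le|u|^2/4$ on $|u|\le\delta$, so that $\operatorname{Re}\bigl(N\log\widehat g(u)\bigr)\le-N|u|^2/4$. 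Using $|e^z-1|\le|z|e^{|z|}$ this gives
\[
\bigl|\widehat g(u)^N-e^{-N|u|^2/2}\bigr|=e^{-N|u|^2/2}\bigl|e^{N\widetilde\theta(u)}-1\bigr|\le CN|u|^3\,e^{-N|u|^2/4},
\]
and integrating over $|u|\le\delta$ with the rescaling $w=\sqrt N\,u$ produces $N^{D/2}\!\int CN|u|^3e^{-N|u|^2/4}\,du=CN^{-1/2}\!\int|w|^3e^{-|w|^2/4}\,dw\le C'/\sqrt N$. Summing the three contributions yields $\|g_N-\gamma\|_{L^\infty}\le C/\sqrt N$ for $N>N(k,p)$, with $C$ depending on $D$, $p$, $\|g^{\ast k}\|_{L^p}$ (through $n_0$, $\eta_\delta$ and the Riemann--Lebesgue threshold) and on $C_3$, and with $N(k,p)$ taken at least $n_0$ and large enough to absorb the exponential tails.

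The routine part is the inner-region Taylor estimate, which is the classical local CLT computation. The main obstacle is the outer region: one must deduce from $g^{\ast k}\in L^p$ alone both the $L^1$-integrability of a fixed convolution power $\widehat g^{\,n_0}$ (needed so that the inversion formula and the tail bound are valid uniformly in $N$) and the strict inequality $|\widehat g|<1$ away from the origin — this is precisely where the argument departs from the i.i.d.\ CLT and where the adaptation of \cite[Theorem~4.6]{HaurayMischler} is used. A secondary point to track carefully is that $n_0$, and hence the admissible range $N>N(k,p)$, genuinely degrades as $k$ grows and as $p\to1^+$, because of the number of iterated Young inequalities required to reach $L^2$.
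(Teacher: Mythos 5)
Your proof is correct and it runs along the same Fourier/Berry--Esseen backbone as the paper: Fourier inversion, split the integral of $|\widehat{g_N}-\widehat\gamma|$ at the frequency scale $\sqrt N\,\delta$, treat the outer region by geometric decay of $|\widehat g|$ and the inner region by third-order Taylor control using the moment hypotheses. Where the two arguments genuinely diverge is in the technical handling of each piece. For the integrability $\widehat{g_N}\in L^1$ you iterate Young's inequality to reach $g^{\ast m}\in L^2$ and then use Plancherel, whereas the paper applies Hausdorff--Young directly to $g^{\ast k}\in L^p$ to get $(\widehat g)^k\in L^{p'}$ and hence $|\widehat g|^N\in L^1$ for $N\ge kp'$; both are valid, yours is more self-contained but the Hausdorff--Young route produces a cleaner explicit threshold in $k,p$. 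You also re-prove the two facts $|\widehat g|<1$ off the origin and $\eta_\delta<1$ from the $L^2$ density of a convolution power and Riemann--Lebesgue, which the paper outsources to Lemma~\ref{lem:fourier}. The most visible difference is in the inner region: you pass to $\log\widehat g(u)=-|u|^2/2+\widetilde\theta(u)$ and use $|e^z-1|\le|z|e^{|z|}$, which requires the (easy) remark that a single-valued branch of $\log$ exists near the origin because $\widehat g(u)$ stays close to $1$ there, while the paper avoids the logarithm entirely by the algebraic identity $a^N-b^N=(a-b)\sum_{j=0}^{N-1}a^jb^{N-1-j}$ combined with the pointwise bound $|\widehat g(\xi)|\le e^{-|\xi|^2/4}$ for small $\xi$ from Lemma~\ref{lem:fourier}(i). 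Both yield the same $C\,N|u|^3e^{-cN|u|^2}$ integrand and hence the $O(N^{-1/2})$ rate, so the choice is purely a matter of taste.
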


In the sequel we will need the following lemma, and we refer again to \cite[Proposition 26]{CCLLV} and \cite[Lemma 4.8]{HaurayMischler} for its proof.

\begin{lemma}\label{lem:fourier}
\begin{enumerate}[(i)]
\item Consider $g\in \PPP_{3}(\R^D)$ satisfying \eqref{eq:hyp-tcl}. Then, there exists $\delta \in (0,1)$ such that
$$
\forall \, \xi \in B(0,\delta) \qquad |\widehat g(\xi)| \le e^{-|\xi|^2/4}.
$$

\item Consider $g \in \PPP(\R^D)\cap L^p(\R^D)$ for $1<p\le \infty$. For any $\delta >0$ there exists $\kappa(\delta) = \kappa(M_3(g),\norm{g}_{L^p}, \delta) \in (0,1)$ such that
$$
\sup_{|\xi| \ge \delta} |\widehat g(\xi)| \le \kappa(\delta).
$$

\end{enumerate}
\end{lemma}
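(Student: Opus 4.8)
The plan is to dispatch the two items separately; item (i) is a routine Taylor expansion of the characteristic function, whereas item (ii) requires a genuine, if elementary, estimate.

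\emph{Item (i).} Fix the Fourier convention $\widehat g(\xi)=\int_{\R^D}e^{-ix\cdot\xi}g(x)\,dx$, under which $\widehat\gamma(\xi)=e^{-|\xi|^2/2}$. Using the elementary bound $|e^{-it}-(1-it-t^2/2)|\le |t|^3/6$ for $t\in\R$ together with the normalisation \eqref{eq:hyp-tcl} (note $\int x\,g=0$ and $\int(x\cdot\xi)^2 g=\xi^\top I_D\,\xi=|\xi|^2$), one gets
\[
\widehat g(\xi)=1-\frac{|\xi|^2}{2}+R(\xi),\qquad |R(\xi)|\le\frac{|\xi|^3}{6}\int_{\R^D}|x|^3 g(x)\,dx\le\frac{C_3}{6}\,|\xi|^3 .
\]
Hence for $|\xi|\le 1$ we have $|\widehat g(\xi)|\le 1-\tfrac{|\xi|^2}{2}+\tfrac{C_3}{6}|\xi|^3$, and since $e^{-t}\ge 1-t$ yields $e^{-|\xi|^2/4}\ge 1-\tfrac{|\xi|^2}{4}$, it suffices to have $\tfrac{C_3}{6}|\xi|^3\le\tfrac{|\xi|^2}{4}$, i.e. $|\xi|\le 3/(2C_3)$. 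Thus $\delta:=\min\{1/2,\,3/(2C_3)\}\in(0,1)$ works. This is the only place the covariance normalisation and the third moment are used.

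\emph{Item (ii).} The qualitative picture already gives the statement: $g\in L^1$ makes $\widehat g$ continuous with $\widehat g(\xi)\to 0$ as $|\xi|\to\infty$, while absolute continuity of $g\,dx$ (it has an $L^p$ density, $p>1$) forbids $g$ from being carried by the Lebesgue-null set $\{x:x\cdot\xi\in\alpha+2\pi\Z\}$, so $|\widehat g(\xi)|<1$ for every $\xi\ne0$; hence $\sup_{|\xi|\ge\delta}|\widehat g(\xi)|<1$. To make this effective in terms of $\norm{g}_{L^p}$, $M_3(g)$ and $\delta$, I would argue as follows. Writing $\widehat g(\xi)=|\widehat g(\xi)|e^{-i\alpha}$ one has
\[
1-|\widehat g(\xi)|=\int_{\R^D}\bigl(1-\cos(x\cdot\xi-\alpha)\bigr)g(x)\,dx ,
\]
so it is enough to lower bound the mass of $g$ away from $S_\eta:=\{x:1-\cos(x\cdot\xi-\alpha)\le\eta\}$, which for small $\eta$ is a periodic family of slabs orthogonal to $\xi$ of thickness $\asymp\sqrt\eta/|\xi|$ and period $2\pi/|\xi|$. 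Truncating to a ball, $\int_{B_R^c}g\le M_3(g)\,R^{-3}$, while a direct count gives $|S_\eta\cap B_R|\le C_\delta\,\sqrt\eta\,R^{D}$ for $R\ge 1$ and all $|\xi|\ge\delta$; hence by Hölder, with $p'=p/(p-1)$,
\[
\int_{S_\eta}g\le\norm{g}_{L^p}\,|S_\eta\cap B_R|^{1/p'}+M_3(g)\,R^{-3}\le C_\delta\,\norm{g}_{L^p}\,\eta^{1/(2p')}R^{D/p'}+M_3(g)\,R^{-3} .
\]
Choosing $R=\eta^{-1/(3D)}$ makes both terms tend to $0$ as $\eta\to 0$, so there is $\eta_0=\eta_0(D,p,\norm{g}_{L^p},M_3(g),\delta)>0$ with $\int_{S_{\eta_0}}g\le\tfrac12$ uniformly in $|\xi|\ge\delta$. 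Then $1-|\widehat g(\xi)|\ge\eta_0\int_{\R^D\setminus S_{\eta_0}}g\ge\eta_0/2$, and $\kappa(\delta):=1-\eta_0/2\in(0,1)$.

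The main obstacle is precisely this last estimate: unlike near the origin, $1-\cos\theta$ does not vanish only at $\theta=0$ but on all of $2\pi\Z$, so the bound must be made uniform over the direction of $\xi$ and over all $|\xi|\ge\delta$. This is where the $L^p$ bound is genuinely needed — to control the volume fraction occupied by the slabs $S_\eta$ — together with a moment bound, which truncates the computation to a finite ball; the hypothesis $|\xi|\ge\delta$ only serves to guarantee that after truncation the partial slab near $\partial B_R$ contributes negligibly. For the precise bookkeeping of the exponents as functions of $D$ and $p$, I would follow \cite[Proposition 26]{CCLLV} and \cite[Lemma 4.8]{HaurayMischler}.
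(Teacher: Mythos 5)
Your argument is correct. Note that the paper itself does not supply a proof of this lemma but defers entirely to \cite[Proposition~26]{CCLLV} and \cite[Lemma~4.8]{HaurayMischler}, so there is no internal proof to compare against line by line; what you give is a self-contained reconstruction along exactly the lines those references follow.

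For item~(i), the third-order Taylor expansion with the normalizations $\int x\,g=0$, $\int x\otimes x\,g=I_D$ gives $|\widehat g(\xi)|\le 1-|\xi|^2/2+\tfrac{C_3}{6}|\xi|^3$ for $|\xi|\le 1$, and comparing with $e^{-|\xi|^2/4}\ge 1-|\xi|^2/4$ yields the claim for $|\xi|\le \min\{1/2,\,3/(2C_3)\}$; this is the standard Berry--Esseen-type local estimate. For item~(ii), the identity $1-|\widehat g(\xi)|=\int(1-\cos(x\cdot\xi-\alpha))\,g(x)\,dx$ reduces matters to showing $g$ cannot concentrate on the slab union $S_\eta$; your volume bound $|S_\eta\cap B_R|\le C_\delta\sqrt\eta\,R^D$ for $R\ge1$, $|\xi|\ge\delta$ is correct (the number of slabs $\sim R|\xi|/\pi$ and the thickness $\sim\sqrt\eta/|\xi|$ trade off, making the estimate uniform over $|\xi|\ge\delta$), and the H\"older/Markov split with $R=\eta^{-1/(3D)}$ sends $\int_{S_\eta}g\to0$ as $\eta\to0$ with an explicit modulus depending only on $D,p,\norm g_{L^p},M_3(g),\delta$, giving $\kappa(\delta)=1-\eta_0/2<1$. (For $p=\infty$ one sets $p'=1$ and the same computation holds.) One minor remark: the hypothesis of part~(ii) does not explicitly assume $g\in\PPP_3$, but the stated dependence $\kappa=\kappa(M_3(g),\ldots)$ implicitly requires $M_3(g)<\infty$, which is exactly what your truncation step uses and is satisfied in the paper's application to $g^{\ast k}$.
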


\begin{proof}[Proof of Theorem \ref{thm:clt}]
We remark that
$$
\widehat g_N(\xi) =\widehat g \left( \frac{\xi}{\sqrt{N}} \right)^N , \qquad
\widehat \gamma_N(\xi) =\widehat \gamma \left( \frac{\xi}{\sqrt{N}} \right)^N.
$$
We have $g^{\ast k} \in L^1\cap L^p$, for $p\in(1,\infty]$, and then by the Hausdorff-Young inequality we deduce that $\widehat{(g^{\ast k} )} = (\widehat g)^k$ lies in $L^{p'}\cap L^\infty$ with $p'\in (1,\infty]$. Furthermore, $\widehat g_N (\xi) \in L^1$ for any $N\ge kp'$. Hence we shall use the inverse Fourier transform to write
\begin{equation}\label{eq:g-gamma}
\begin{aligned}
| g_N(x) - \gamma(x)| 
&= (2\pi)^D\left| \int_{\R^D} e^{i \xi\cdot x} \left(\widehat g_N(\xi) - \widehat\gamma(\xi) \right) \, d\xi \right| \\
&\le (2\pi)^D \int_{\R^D}  \left| \widehat g_N(\xi) - \widehat\gamma(\xi) \right|\, d\xi.
\end{aligned}
\end{equation}
Spliting the last integral in low and high frequencies, we obtain
$$
\begin{aligned}
\int_{\R^D}  \left| \widehat g_N(\xi) - \widehat\gamma(\xi) \right|\, d\xi 
&\le \int_{|\xi|\ge \sqrt{N} \delta} |\widehat g_N(\xi)| \, d\xi 
+  \int_{|\xi|\ge \sqrt{N} \delta} |\widehat \gamma(\xi)| \, d\xi \\
&\quad+ \int_{|\xi|< \sqrt{N} \delta} |\widehat g_N(\xi) - \widehat\gamma(\xi)| \, d\xi \\
&=: T_1 + T_2 + T_3,
\end{aligned}
$$
for some $\delta\in (0,1)$.

For the first term, we write
$$
\begin{aligned}
T_1 
&\le \int_{|\xi|\ge \sqrt{N} \delta} \left|\widehat g \left( \frac{\xi}{\sqrt{N}} \right)\right|^N \, d\xi 
= N^{D/2} \int_{|\eta|\ge\delta} |\widehat g(\eta)| \, d\eta \\
&\leq N^{D/2} \left( \sup_{\eta\ge \delta} |\widehat g(\eta)^k|   \right)^{N/k-p'} \int_{|\eta|\ge \delta} |\widehat g(\eta)^k|^{p'} \,d\eta \\
&\le N^{D/2} \kappa(\delta)^{N/k - p'} C_{D,p} \norm{g^{\ast k}}_{L^{p}}^{p'} 
\end{aligned}
$$
where $\delta \in (0,1)$ is given by Lemma \ref{lem:fourier}-(i) and $\kappa(\delta)$ is given by Lemma \ref{lem:fourier}-(ii) applied to $g^{\ast k}$ (because we have supposed only $g^{\ast k} \in L^p$). We get the same estimate for the second term, then we obtain that there exists a constant $C=C(D,p,\norm{g^{\ast k}}_{L^p})$ such that 
$$
T_1 + T_2 \le \frac{C}{\sqrt{N}}.
$$


Finally, for the third term we have
$$
T_3 = \int_{|\xi|< \sqrt{N} \delta} \frac{|\widehat g_N(\xi) - \widehat\gamma(\xi)|}{|\xi|^3} |\xi|^3 \, d\xi
$$
and we can estimate
$$
\begin{aligned}
\frac{|\widehat g_N(\xi) - \widehat\gamma(\xi)|}{|\xi|^3} 
&= \frac{1}{N^{3/2}} \, \frac{|\widehat g(\xi/\sqrt N)^N - \widehat\gamma(\xi/\sqrt N)^N|}{|\xi/\sqrt N|^3} \\
&=  \frac{1}{N^{3/2}} \, \frac{|\widehat g(\xi/\sqrt N) - \widehat\gamma(\xi/\sqrt N)|}{|\xi/\sqrt N|^3} \times \left| \sum_{k=0}^{N-1} \widehat g(\xi/\sqrt N)^k  \widehat\gamma(\xi/\sqrt N)^{(N-k-1)}  \right|.
\end{aligned}
$$
Moreover, point (i) in Lemma~\ref{lem:fourier} implies
$$
\left| \sum_{k=0}^{N-1} \widehat g(\xi/\sqrt N)^k  \widehat\gamma(\xi/\sqrt N)^{(N-k-1)}  \right| \le \sum_{k=0}^{N-1} e^{-\frac{k|\xi|^2}{4N}}e^{-\frac{(N-k-1)|\xi|^2}{4N}}
\le N e^{-\frac{|\xi|^2}{8}}.
$$
Hence, we obtain
$$
\begin{aligned}
T_3 &\le  \frac{1}{N^{3/2}} \left(  \sup_{\eta} \frac{|\widehat g(\eta) - \widehat\gamma(\eta)|}{|\eta|^3} \right) \int_{\R^D} N e^{-\frac{|\xi|^2}{8}} |\xi|^3 \, d\xi \\
&\le \frac{1}{\sqrt N}  \left( M_3(g) + M_3(\gamma) \right) C_D,
\end{aligned}
$$
and we finish the proof gathering the estimates of $T_1$, $T_2$ and $T_3$ togheter with \eqref{eq:g-gamma}.

\end{proof}

With these results we are able to state the following theorem about the asymptotic behaviour of $Z'_N$.

\begin{thm}
\label{thm:ZN}
Consider $f\in \PPP_{6}(\R^d) \cap L^p(\R^d)$, with $p>1$, satisfying \eqref{eq:hyp-f}. Then we have
\begin{equation*}\label{eq:thm-Z'Nbis}
\begin{aligned}
Z'_N(f;r,z) 
& =
\frac{\sqrt{2d}}{\Sigma\EE^{d/2}} \, 
\frac{(dN)^{\frac{d(N-1)-2}{2}}}{\left( r^2 - \frac{|z|^2}{N} \right)^{\frac{d(N-1)-2}{2}}} \,
\frac{e^{-\frac{dN}{2}}}{e^{-\frac{r^2}{2}}} 
\\
&\quad \times
\left[  \exp{\left(-\frac{|z|^2}{2\EE N}
-\frac{(r^2-NE)^2}{2\Sigma^2 N}\right)}  + O\left(1/\sqrt{N}\right)\right]
\end{aligned}
\end{equation*}
and in the particular case $r^2=dN$ and $z=0$, we have
\begin{equation*}\label{}
\begin{aligned}
Z'_N(f;\sqrt{dN},0) = \frac{\sqrt{2d}}{\Sigma \EE^{d/2}} \, 
\left[\exp\left(  - \frac{N(d-E)^2}{2\Sigma^2}   \right) + O\left(1/\sqrt{N}\right)\right].
\end{aligned}
\end{equation*}

\end{thm}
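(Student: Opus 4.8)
The plan is to express $Z'_N$ in terms of the $N$-fold convolution power of the law $h$ of $(\VV_1,|\VV_1|^2)$ and then apply the local Central Limit Theorem (Theorem~\ref{thm:clt}) in dimension $d+1$, keeping careful track of the explicit constants.

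\emph{Step 1 (reduction).} Since $|V|^2=r^2$ on $\SS^N(r,z)$, the Gaussian weight is constant there, equal to $\gamma^{\otimes N}=(2\pi)^{-dN/2}e^{-r^2/2}$, so $Z'_N(f;r,z)=(2\pi)^{dN/2}e^{r^2/2}\,Z_N(f;r,z)$. Inserting the representation \eqref{eq:ZN-hN} of $Z_N$ together with the explicit value \eqref{eq:mesureSN} of $|\SS^N(r,z)|$, the factor $(r^2-|z|^2/N)^{1/2}$ in the numerator combines with the denominator and one gets, writing $u=r^2$,
\begin{equation*}
Z'_N(f;r,z)=\frac{2\,(2\pi)^{dN/2}\,e^{r^2/2}\,N^{d/2}}{\bigl|\Sp^{d(N-1)-1}\bigr|\,\bigl(r^2-|z|^2/N\bigr)^{\frac{d(N-1)-2}{2}}}\; h^{\ast N}(z,r^2).
\end{equation*}
Thus everything reduces to the asymptotics of $h^{\ast N}(z,r^2)$ and to the evaluation of the explicit prefactor.

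\emph{Step 2 (local CLT).} By \eqref{eq:hyp-f} the random vector $(\VV_1,|\VV_1|^2)$ has mean $(0,E)$ and covariance matrix $AA^{T}$ with $A:=\operatorname{diag}(\sqrt\EE\,I_d,\Sigma)$; let $g\in\PPP(\R^{d+1})$ be the law of $A^{-1}\bigl((\VV_1,|\VV_1|^2)-(0,E)\bigr)$, which is centred with identity covariance. One checks the hypotheses of Theorem~\ref{thm:clt} for $g$ with $D=d+1$ and $k=2$: since $f\in\PPP_6(\R^d)$, Lemma~\ref{lem:h-moment} gives $h\in\PPP_3(\R^{d+1})$, hence $M_3(g)<\infty$; and since $f\in L^p(\R^d)$ with $p>1$ (using also a weighted bound coming from $f\in\PPP_6$ when $d\ge 3$), Lemma~\ref{lem:h-Lq} gives $h^{\ast2}\in L^q(\R^{d+1})$ for some $q>1$, whence $g^{\ast2}\in L^q$ as an affine image of $h^{\ast2}$. (Note that $k=1$ is not available since $h$ is supported on the paraboloid $\{u=|v|^2\}$.) Theorem~\ref{thm:clt} then yields $g^{\ast N}(x)=N^{-(d+1)/2}\bigl[(2\pi)^{-(d+1)/2}e^{-|x|^2/(2N)}+O(1/\sqrt N)\bigr]$ uniformly in $x$, and undoing the affine change of variables, $h^{\ast N}(z,u)=(\EE^{d/2}\Sigma)^{-1}g^{\ast N}\bigl(z/\sqrt\EE,(u-NE)/\Sigma\bigr)$, i.e.
\begin{equation*}
h^{\ast N}(z,r^2)=\frac{N^{-(d+1)/2}}{\EE^{d/2}\Sigma}\left[(2\pi)^{-\frac{d+1}{2}}\exp\!\left(-\frac{|z|^2}{2\EE N}-\frac{(r^2-NE)^2}{2\Sigma^2 N}\right)+O(1/\sqrt N)\right].
\end{equation*}

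\emph{Step 3 (constants and conclusion).} Substituting into Step~1, the powers of $N$ collapse to $N^{d/2}\cdot N^{-(d+1)/2}=N^{-1/2}$, so it only remains to verify
\begin{equation*}
\frac{2\,N^{-1/2}\,(2\pi)^{\frac{dN}{2}-\frac{d+1}{2}}}{\bigl|\Sp^{d(N-1)-1}\bigr|}=\sqrt{2d}\;(dN)^{\frac{d(N-1)-2}{2}}\,e^{-dN/2}\,\bigl(1+O(1/N)\bigr).
\end{equation*}
This follows from $|\Sp^{n-1}|=2\pi^{n/2}/\Gamma(n/2)$ with $n=d(N-1)$ and Stirling's formula $\Gamma(s)=\sqrt{2\pi}\,s^{s-1/2}e^{-s}(1+O(1/s))$, after cancelling the powers of $\pi$ and $2$ and using $(1-1/N)^{d(N-1)/2}=e^{-d/2}(1+O(1/N))$ together with $e^{-d(N-1)/2}=e^{d/2}e^{-dN/2}$. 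Since the Gaussian factor in Step~2 is bounded by $1$, this $O(1/N)$ is absorbed into the $O(1/\sqrt N)$ error, which gives the stated formula for $Z'_N(f;r,z)$. For the particular case $r^2=dN$, $z=0$ one has $r^2-|z|^2/N=dN$ and $e^{-dN/2}/e^{-r^2/2}=1$, while the Gaussian exponent reduces to $-\frac{(dN-NE)^2}{2\Sigma^2 N}=-\frac{N(d-E)^2}{2\Sigma^2}$, yielding the second displayed identity.

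\emph{Main obstacle.} The one genuinely non-mechanical point is the application of the local CLT in Step~2 to the singular measure $h$, which is exactly what forces the detour through $h^{\ast2}\in L^q$ (Lemma~\ref{lem:h-Lq}) rather than $h$ itself; the rest is careful but routine asymptotic bookkeeping, the delicate part being to confirm in Step~3 that every non-explicit correction produced by Stirling's formula is $O(1/N)$, so that the prefactor really collapses to the clean closed form written in the statement.
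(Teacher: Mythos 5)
Your proof is correct and follows essentially the same route as the paper: both reduce $Z'_N$ to the $N$-fold convolution power of $h$ via \eqref{eq:ZN-hN} and \eqref{eq:mesureSN}, normalize $h$ to the centred unit-covariance $g$ on $\R^{d+1}$, apply the local CLT (Theorem~\ref{thm:clt}) with $k=2$ using Lemmas~\ref{lem:h-moment} and \ref{lem:h-Lq}, and finish by cancelling the explicit surface-area and Gamma factors through Stirling's formula. Your remark that the Stirling $O(1/N)$ correction is absorbed into the CLT's $O(1/\sqrt N)$ error because the Gaussian factor is bounded by $1$ is exactly the point the paper leaves implicit.
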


\begin{proof}

Let us introduce 
$$
g(v,u) = \Sigma \, \EE^{d/2} \, h(\EE^{1/2}v,E+ \Sigma u) \in \PPP(\R^{d+1}),
$$
with $v\in \R^d$ and $u\in \R$. Since $h$ lies in $\PPP_{ 3}(\R^{d+1})$ by Lemma~\ref{lem:h-moment} and $h^{\ast 2} \in L^q(\R^{d+1})$ for some $q \in (1,p)$ thanks to Lemma~\ref{lem:h-Lq}, we have $g \in \PPP_{ 3}(\R^{d+1})$ and $g^{\ast 2}\in L^q(\R^{d+1})$.

Moreover $g$ verifies (by construction)
\begin{equation*}
\int_{\R^{d+1}}y\, g(y) \,dy =0 \,,\quad 
\int_{\R^{d+1}}(y\otimes y) \, g(y)\,dy = I_{d+1} ,
\end{equation*}
where $I_{d+1}$ is the identity matrix in dimension $d+1$.

We can now apply Theorem \ref{thm:clt} to $g$, which implies that there exists $C>0$ and $N_0$ such that for all $N>N_0$,
\begin{equation*}
\begin{aligned}
\sup_{(v,u)\in \R^d\times \R} \left| g_N(v,u) - \gamma(v,u)\right| \leq \frac{C}{\sqrt{N}},
\end{aligned}
\end{equation*}
where $g_N(v,u) = N^{(d+1)/2} g^{\ast N}(\sqrt{N}v,\sqrt{N}u)$ is the normalized $N$-convolution power of $g$, with
$$
\begin{aligned}
g^{\ast N}(\sqrt{N}v,\sqrt{N}u)
&=  \Sigma \, \EE^{d/2} \, h^{\ast N}(\EE^{1/2} \sqrt{N} v,NE + \Sigma \sqrt{N}u) ,
\end{aligned}
$$ 
and 
$$ 
\gamma(v,u) = \frac{e^{-|v|^2/2}}{(2\pi)^{d/2}}\frac{e^{-u^2/2}}{(2\pi)^{1/2}}
$$
is the Gaussian measure in dimension $d+1$ (recall that we have $v\in \R^d$ and $u\in\R$).
It follows that
\begin{equation}\label{eq:hNconvolution}
\begin{aligned}
\sup_{(v,u)\in \R^d\times \R}
\left| 
h^{\ast N}(v,u)  - \frac{\Sigma^{-1}\EE^{-d/2}}{N^{(d+1)/2}}\, \gamma\left(\EE^{-1/2}N^{-1/2}v,\frac{u-NE}{\Sigma\sqrt{N}}\right)\right| 
\leq \frac{C}{\sqrt{N}}\,\frac{\Sigma^{-1}\EE^{-d/2}}{N^{(d+1)/2}}.
\end{aligned}
\end{equation}
Gathering \eqref{eq:hNconvolution} and \eqref{eq:ZN-hN} we obtain
\begin{equation*}
\begin{aligned}
&Z_N(f;r,z)  \\ 
&\qquad = \frac{2\,N^{d/2}\,\left(r^2-\frac{|z|}{N^2} \right)^{1/2}}{\left| \SS^N(r,z)\right|} \frac{\Sigma^{-1}\EE^{-d/2}}{N^{(d+1)/2}} \frac{1}{(2\pi)^{(d+1)/2}}
\left[  \exp{\left(-\frac{|z|^2}{2\EE N}
-\frac{(r^2-NE)^2}{2\Sigma^2 N}\right)}  + O\left(1/\sqrt{N}\right)\right].
\end{aligned}
\end{equation*}
Using \eqref{eq:mesureSN} we have
\begin{equation*}\label{}
\begin{aligned}
Z_N(f;r,z)  
& = \frac{2\,N^{d/2}\,\left(r^2-\frac{|z|}{N^2} \right)^{1/2}}{\left| \Sp^{d(N-1)-1}\right|}\left(r^2 - \frac{|z|^2}{N}   \right)^{-\frac{d(N-1)-1}{2}}_{+} \frac{\Sigma^{-1}\EE^{-d/2}}{N^{(d+1)/2}} \frac{1}{(2\pi)^{(d+1)/2}}\\
&\quad\times
\left[  \exp{\left(-\frac{|z|^2}{2\EE N}
-\frac{(r^2-NE)^2}{2\Sigma^2 N}\right)}  +  O\left(1/\sqrt{N}\right)\right].
\end{aligned}
\end{equation*}
Thanks to the formula 
$$
|\Sp^{n-1}| = \frac{2\pi^{n/2}}{\Gamma(n/2)}
$$
and to Stirling's formula,
$$
\Gamma\left( an+b \right) = \sqrt{2\pi} \, (an)^{\frac{an+b-1}{2}} \, e^{-an}\left( 1 + O(1/n)\right),
$$
we have 
$$
\Gamma\left(\frac{d(N-1)}{2}\right) = \sqrt{2\pi} \, (dN)^{\frac{d(N-1)-1}{2}} \, 2^{-\frac{d(N-1)-1}{2}}\, e^{-\frac{dN}{2}}\left( 1 + O(1/N)\right)
$$
and then
\begin{equation*}\label{eq:ZN}
\begin{aligned}
Z_N(f;r,z)  
& =
\frac{\sqrt{2d} }{\Sigma\EE^{d/2}} \, \left(\frac{e^{-\frac{r^2}{2}}}{(2\pi)^{\frac{dN}{2}}}\right) \,
\frac{(dN)^{\frac{d(N-1)-2}{2}}}{\left(r^2 - \frac{|z|^2}{N}  \right)^{\frac{d(N-1)-2}{2}}}\,
\frac{e^{-\frac{dN}{2}}}{e^{-\frac{r^2}{2}}} \,
\\
&\quad\times
\left[  \exp{\left(-\frac{|z|^2}{2\EE N}
-\frac{(r^2-NE)^2}{2\Sigma^2 N}\right)}  + O\left(1/\sqrt{N}\right)\right],
\end{aligned}
\end{equation*}
which implies for the case $r^2=dN$ and $z=0$
\begin{equation*}\label{}
\begin{aligned}
&Z'_N(f;\sqrt{dN},0)  
 =
\frac{\sqrt{2d} }{\Sigma\EE^{d/2}} \left[  \exp{\left(-\frac{N(d-E)^2}{2\Sigma^2}\right)}  
+ O\left(1/\sqrt{N}\right)\right].
\end{aligned}
\end{equation*}

\end{proof}

\subsection{Conditioned tensor product}

Consider now 
$$
F^N = [  f^{\otimes N} ]_{\SS^N_{B}} = \frac{f^{\otimes N}}{Z_N(f;\sqrt{dN},0)}\, \gamma^N
$$ 
the restriction of the $N$-fold tensor of $f$ to the Boltzmann's sphere $\SS^N_{B}$, where $f$ verifies $\eqref{eq:hyp-f}$ with $E=d$, more precisely with
$$
E=\int |v|^2 f = d,
$$
i.e. $f$ has the same second order moment that $\gamma$.

We have then the following theorem, which is a precise version of point $(i)$ in Theorem~\ref{thm:intro-FN}.

\begin{thm}\label{thm:FN-chaos}
Consider $f \in \PPP_{6}(\R^d) \cap L^p(\R^d)$, with $p>1$. Then, the sequence of probability measure $F^N\in\PPP(\SS^N_\BB)$ defined by $F^N = [  f^{\otimes N} ]_{\SS^N_{B}}$ is $f$-chaotic. 

More precisely, for any fixed $\ell$ there exists a constant $C:=C(\ell)>0$ such that for $N\ge \ell +1$ we have
\begin{equation*}\label{eq:FN-chaos}
W_1 (F^N_\ell,f^{\otimes \ell}) \leq {\norm{F^N_\ell -f^{\otimes \ell}}}_{L^1_1} \leq \frac{C}{\sqrt{N}}.
\end{equation*}

\end{thm}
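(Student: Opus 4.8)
The plan is to reduce the statement to the local central limit theorem already established, Theorem~\ref{thm:clt}, by first obtaining an exact formula for the marginal $F^N_\ell$.

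\emph{Step 1 (marginal formula).} I would disintegrate the uniform measure $\gamma^N$ over the map $V\mapsto V_\ell$. For $|V_\ell|^2+|\bar V_\ell|^2/(N-\ell)\le dN$ the fibre above $V_\ell$ is the sphere $\SS^{N-\ell}(\sqrt{dN-|V_\ell|^2},-\bar V_\ell)$, and by invariance of $\gamma^N$ under rotations acting only on the last $N-\ell$ blocks of variables, the conditional law on that fibre is precisely its uniform probability measure. Hence, from the definition $F^N=Z_N(f;\sqrt{dN},0)^{-1}f^{\otimes N}\gamma^N$, the density of $\gamma^N_\ell$ in \eqref{eq:gammaNl}, and the definition of $Z_{N-\ell}$ in \eqref{eq:def-Z'N},
\[
F^N_\ell(V_\ell)=\frac{f^{\otimes\ell}(V_\ell)}{Z_N(f;\sqrt{dN},0)}\;Z_{N-\ell}\big(f;\sqrt{dN-|V_\ell|^2},-\bar V_\ell\big)\;\gamma^N_\ell(V_\ell).
\]
Substituting \eqref{eq:ZN-hN} (which expresses each $Z_M(f;\cdot)$ through $h^{\ast M}$), together with \eqref{eq:mesureSN} and \eqref{eq:gammaNl}, all the spherical-measure factors, the powers of $N-\ell$, and the excess-energy factors $dN-|V_\ell|^2-|\bar V_\ell|^2/(N-\ell)$ cancel identically, leaving
\[
F^N_\ell(V_\ell)=f^{\otimes\ell}(V_\ell)\,\frac{h^{\ast(N-\ell)}\big(-\bar V_\ell,\;dN-|V_\ell|^2\big)}{h^{\ast N}(0,dN)},
\]
which in particular shows $\supp F^N_\ell\subset\{|V_\ell|^2+|\bar V_\ell|^2/(N-\ell)\le dN\}$. (Equivalently, this is Bayes' formula for the law of an i.i.d.\ $f$-sample $(\VV_1,\dots,\VV_N)$ conditioned on $\sum_j(\VV_j,|\VV_j|^2)=(0,dN)$.)

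\emph{Step 2 (local CLT and a pointwise bound).} I would then invoke \eqref{eq:hNconvolution}, the estimate obtained in the proof of Theorem~\ref{thm:ZN} from applying Theorem~\ref{thm:clt} to the renormalised law $g$ of $(\VV_1,|\VV_1|^2)$. The hypothesis $E=\int|v|^2f\,dv=d$ is essential here: it gives $NE=dN$, so the Gaussian energy slot $(u-NE)/(\Sigma\sqrt N)$ vanishes at $u=dN$, whence
\[
h^{\ast N}(0,dN)=\frac{\Sigma^{-1}\EE^{-d/2}}{N^{(d+1)/2}}\Big[(2\pi)^{-(d+1)/2}+O(1/\sqrt N)\Big]\ \ge\ \frac{c}{N^{(d+1)/2}}
\]
for $N$ large, while \eqref{eq:hNconvolution} also yields the uniform bound $h^{\ast(N-\ell)}(z,u)\le C\,(N-\ell)^{-(d+1)/2}$ for all $(z,u)$. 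Inserting these in the formula of Step~1 gives, for $N$ large and uniformly in $V_\ell$, the pointwise control $F^N_\ell(V_\ell)\le C_\ell\,f^{\otimes\ell}(V_\ell)$, so that $|F^N_\ell-f^{\otimes\ell}|\le(C_\ell+1)f^{\otimes\ell}$ and $M_6(F^N_\ell)\le C_\ell M_6(f^{\otimes\ell})<\infty$ (here $f\in\PPP_6$ enters).

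\emph{Step 3 (splitting and conclusion).} I would split $\|F^N_\ell-f^{\otimes\ell}\|_{L^1_1}=\int(1+|V_\ell|)\,|F^N_\ell-f^{\otimes\ell}|\,dV_\ell$ at $|V_\ell|=N^{1/8}$. On $\{|V_\ell|>N^{1/8}\}$ the integrand is $\le(C_\ell+1)(1+|V_\ell|)f^{\otimes\ell}$, and the finite sixth moment of $f^{\otimes\ell}$ bounds this part by $C\,N^{-5/8}=O(1/\sqrt N)$. On $\{|V_\ell|\le N^{1/8}\}$ the support condition holds and $F^N_\ell-f^{\otimes\ell}=f^{\otimes\ell}\big(h^{\ast(N-\ell)}(-\bar V_\ell,dN-|V_\ell|^2)/h^{\ast N}(0,dN)-1\big)$; there the two Gaussian arguments $-\bar V_\ell/(\EE^{1/2}\sqrt{N-\ell})$ and $(\ell d-|V_\ell|^2)/(\Sigma\sqrt{N-\ell})$ are $O(N^{-1/4})$, so a first-order expansion of the Gaussian exponential, the elementary $(N/(N-\ell))^{(d+1)/2}=1+O(1/N)$, and the $O(1/\sqrt N)$ errors in the local CLT for both $h^{\ast(N-\ell)}$ and $h^{\ast N}$ give that the ratio equals $1+O(1/\sqrt N)$ uniformly; hence this part is $\le C(1+M_1(f^{\otimes\ell}))/\sqrt N$. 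Adding the two contributions gives $\|F^N_\ell-f^{\otimes\ell}\|_{L^1_1}\le C(\ell)/\sqrt N$ for $N$ large, and the finitely many remaining $N\ge\ell+1$ are absorbed into $C(\ell)$ since $\|F^N_\ell-f^{\otimes\ell}\|_{L^1_1}\le M_1(F^N_\ell)+M_1(f^{\otimes\ell})+2\le C\sqrt N$ always ($F^N$ lives on $\SS^N_\BB$, so $|v_i|\le\sqrt{dN}$). Finally $W_1(F^N_\ell,f^{\otimes\ell})\le\|F^N_\ell-f^{\otimes\ell}\|_{L^1_1}$ follows by testing against $1$-Lipschitz functions vanishing at the origin, which are dominated by $|V_\ell|$.

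The step I expect to require the most care is Step~1: justifying the disintegration of $\gamma^N$ and checking that all combinatorial and surface-area factors cancel down to the clean ratio of convolution powers. The only other delicate point is the choice of cut-off exponent in Step~3 — anything in the window $[1/10,1/8]$ makes both tails genuinely $O(1/\sqrt N)$ — since a naive cut-off would yield only a worse power.
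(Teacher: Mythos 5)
Your proof is correct, and it rests on the same fundamental ingredient as the paper's, namely the local CLT estimate~\eqref{eq:hNconvolution} for the pair $(\VV_1,|\VV_1|^2)$ together with the explicit marginal~\eqref{eq:gammaNl}, the $N^{1/8}$ cut-off, and the $\PPP_6$ moment hypothesis. The genuine difference is the decomposition. You notice that when~\eqref{eq:ZN-hN}, \eqref{eq:mesureSN} and~\eqref{eq:gammaNl} are substituted into $F^N_\ell = f^{\otimes\ell}\,\gamma^N_\ell\,Z_{N-\ell}/Z_N$, the spherical-area factors, the powers of $N-\ell$ and the powers of the excess energy $dN-|V_\ell|^2-|\bar V_\ell|^2/(N-\ell)$ cancel exactly, leaving the clean Bayes identity $F^N_\ell = f^{\otimes\ell}\cdot h^{\ast(N-\ell)}(-\bar V_\ell,dN-|V_\ell|^2)/h^{\ast N}(0,dN)$, which you then hit directly with~\eqref{eq:hNconvolution} applied to both the numerator and the denominator; the factor $(N/(N-\ell))^{(d+1)/2}=1+O(1/N)$ and the Gaussian values at the origin cancel, and no asymptotics for Gamma functions are ever needed. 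The paper instead routes through Theorem~\ref{thm:ZN} (which already uses Stirling's formula to package the CLT into an asymptotic expansion of $Z'_M$), and is then forced to re-introduce and re-analyze a residual combinatorial factor $\theta^N_2$ by a second Stirling computation. Your route avoids Stirling altogether, since everything cancels before the asymptotics enter; this is a slightly more transparent account of why the local CLT alone drives the $N^{-1/2}$ rate. Two small points should be made explicit: the marginal identity and the Gaussian centring use $\int vf=0$ and $\int|v|^2f=d$, which are indeed hypotheses of the statement (they appear in the paragraph preceding the theorem, where $f$ is required to verify~\eqref{eq:hyp-f} with $E=d$); and the uniform bound $h^{\ast(N-\ell)}\le C(N-\ell)^{-(d+1)/2}$, as well as the lower bound on $h^{\ast N}(0,dN)$, hold only for $N$ above the threshold $N(2,p)$ of Theorem~\ref{thm:clt}, so the finitely many remaining $N\ge\ell+1$ must be absorbed into $C(\ell)$, as you correctly do at the end.
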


\begin{proof}
With the notation $V = (v_1,\dots,v_N)\in \R^{dN}$, $V_\ell = (v_i)_{1\leq i\leq\ell}$, $V_{\ell,N} = (v_i)_{\ell+1\leq i\leq N}$ and $\bar V_\ell = \sum_{i=1}^{\ell} v_i$, we have from the definition of $F^N$
\begin{equation*}
\begin{aligned}
F^N (dV)&= \frac{f^{\otimes N}(V) \gamma^N(dV)}{Z_N(f;\sqrt{dN},0)} \\
&= \frac{f^{\otimes \ell}}{\gamma^{\otimes \ell}} (V_{\ell}) \, \frac{1}{Z_N'(f;\sqrt{dN},0)} \, 
\frac{f^{\otimes N-\ell}}{\gamma^{\otimes N-\ell}} (V_{\ell,N}) \, \gamma^N(dV) .
\end{aligned}
\end{equation*}
We recall that $\gamma^N = \gamma^N_{\sqrt{dN},0}$ and we have
\begin{equation*}
\begin{aligned}
\gamma^N_{\sqrt{dN},0}(dV) 
&= \gamma^N_\ell(dV_\ell) \, \gamma^{N-\ell}_{\sqrt{dN-|V_\ell|^2},z} (dV_{\ell,N})
\end{aligned}
\end{equation*}
where $z = -\sum_{i=1}^{\ell}v_i = - \bar V_\ell$.
We fix $\ell\ge 1$ and $N\ge \ell +1$, then we have
\begin{align*}
F^N_\ell (V_\ell) &= \int_{\R^{d(N-\ell)}} F^N(V) \, dV_{\ell,N}\\
&= \frac{f^{\otimes \ell}}{\gamma^{\otimes \ell}} (V_{\ell})\, \frac{\gamma^N_\ell(V_\ell)}{Z_N'(f;\sqrt{dN},0)}  \,
\int_{\SS^{N-\ell}\left(\sqrt{dN-|V_\ell|^2},z\right)} \frac{f^{\otimes N-\ell}}{\gamma^{\otimes N-\ell}} (V_{\ell,N}) \,
\gamma^{N-\ell}_{\sqrt{dN-|V_\ell|^2},z} (dV_{\ell,N}) \\
&=\frac{f^{\otimes \ell}}{\gamma^{\otimes \ell}} (V_{\ell})\,
\frac{Z_{N-\ell}'\left(f;\sqrt{dN-|V_\ell|^2},-\bar V_\ell \right)}{Z_N'(f;\sqrt{dN},0)} \, \gamma^N_\ell(V_\ell).
\end{align*}
Let us first compute the ratio betwenn $Z'_{N-\ell}$ and $Z'_N$, by Theorem~\ref{thm:ZN} we have 
\begin{align*}
\frac{Z_{N-\ell}'\left(f;\sqrt{dN-|V_\ell|^2},-\bar V_\ell \right)}{Z_N'(f;\sqrt{dN},0)} 
& = \frac{\left( d(N-\ell) \right)^{\frac{d(N-\ell-1)-2}{2}}}{\left(dN-|V_\ell|^2-\frac{|\bar V_\ell|^2}{N-\ell} \right)^{\frac{d(N-\ell-1)-2}{2}}} \, 
\frac{e^{-\frac{d(N-\ell)}{2}}}{e^{-\frac{(dN-|V_\ell|^2)}{2}}} \\
& \times
\left[  \exp{\left(-\frac{|\bar V_\ell|^2}{2\EE (N-\ell)}
-\frac{(d\ell-|V_\ell|^2)^2}{2\Sigma^2 (N-\ell)}\right)}  + O\left(N^{-1/2}\right)\right].
\end{align*}
Using the later expression with Lemma \ref{lem:gammaNl} one obtains
\begin{align*}
F^N_\ell (V_\ell) 
&=\frac{f^{\otimes \ell}}{\gamma^{\otimes \ell}} (V_{\ell})\,
\frac{\left( d(N-\ell) \right)^{\frac{d(N-\ell-1)-2}{2}}}{\left(dN-|V_\ell|^2-\frac{|\bar V_\ell|^2}{N-\ell} \right)^{\frac{d(N-\ell-1)-2}{2}}} \, 
\frac{e^{\frac{d\ell}{2}}}{e^{\frac{|V_\ell|^2}{2}}} \\
&\qquad \times
 \left[  \exp{\left(-\frac{|\bar V_\ell|^2}{2\EE (N-\ell)}
-\frac{(d\ell - |V_\ell|^2)^2}{2\Sigma^2 (N-\ell)}\right)}  + O\left(N^{-1/2}\right)\right] \\
&\qquad \times
\frac{ \Abs{\Sp^{d(N-\ell-1)-1}} }{\Abs{\Sp^{d(N-1)-1}}} \,
\frac{ \left( dN-|V_\ell|^2 - \frac{|\bar V_\ell|^2}{N-\ell}    \right)^{\frac{d(N-\ell-1)-2}{2}}_+ }{(dN)^{\frac{d(N-1)-2}{2}} \left(\frac{N-\ell}{N}\right)^{\frac{d}{2}} } \\
 &= f^{\otimes \ell}\left[  \exp{\left(-\frac{|\bar V_\ell|^2}{2\EE (N-\ell)}
-\frac{(d\ell - |V_\ell|^2)^2}{2\Sigma^2 (N-\ell)}\right)}  + O\left(N^{-1/2}\right)\right]
{\bf 1}_{dN-|V_\ell|^2 - \frac{|\bar V_\ell|^2}{N-\ell} > 0}
 \\
&\qquad \times
 \frac{ \Abs{\Sp^{d(N-\ell-1)-1}} }{\Abs{\Sp^{d(N-1)-1}}}\, \frac{(d(N-\ell))^{\frac{d(N-\ell-1)-2}{2}}}{(dN)^{\frac{d(N-1)-2}{2}}} \,
 \left( \frac{N}{N-\ell} \right)^{d/2} 
 \, \left(2\pi e\right)^{\frac{d\ell}{2}}  .
\end{align*} 
Since 
$$
\left( \frac{N}{N-\ell} \right)^{d/2}  =O(1),
$$
we have
\begin{equation}\label{eq:FN_l}
\begin{aligned}
F^N_\ell (V_\ell) 
 &= f^{\otimes \ell}(V_\ell) \, \theta^N_1(V_\ell) \, \theta^N_2(V_\ell)
  \end{aligned} 
\end{equation}
with
\begin{equation}\label{eq:FN_l-theta}
\begin{aligned}
&\theta^N_1 = \left[  \exp{\left(-\frac{|\bar V_\ell|^2}{2\EE (N-\ell)}
-\frac{(d\ell - |V_\ell|^2)^2}{2\Sigma^2 (N-\ell)}\right)}  + O\left(N^{-1/2}\right)\right]
{\bf 1}_{dN-|V_\ell|^2 - \frac{|\bar V_\ell|^2}{N-\ell} > 0},
\\
&\theta^N_2 = \frac{ \Abs{\Sp^{d(N-\ell-1)-1}} }{\Abs{\Sp^{d(N-1)-1}}}\, \frac{(d(N-\ell))^{\frac{d(N-\ell-1)-2}{2}}}{(dN)^{\frac{d(N-1)-2}{2}}} \, \left(2\pi e\right)^{\frac{d\ell}{2}}.
\end{aligned}
\end{equation}
Thanks to Stirling's formula again, we obtain
$$
\frac{ \Abs{\Sp^{d(N-\ell-1)-1}} }{\Abs{\Sp^{d(N-1)-1}}} = \left( \frac{dN}{2\pi}\right)^{\frac{dl}{2}}\left(  1 + O(N^{-1})\right), \qquad 
\theta^N_2 =   1 + O(N^{-1}).
$$
Moreover we can easily see by \eqref{eq:FN_l-theta} that ${\Vert \theta^N_1 \Vert}_{L^\infty} \le C$ uniformly in $N$, and
\begin{equation}\label{eq:thetaN-1}
\begin{aligned}
|\theta^N_1(V_\ell) - 1| &= |\theta^N_1(V_\ell) - 1|{\bf 1}_{|V_\ell|\le R} + 
|\theta^N_1(V_\ell) - 1|{\bf 1}_{|V_\ell|\ge R} \\
&\le \left| \left(\frac{|\bar V_\ell|^2}{2\EE (N-\ell)}
+\frac{(d\ell - |V_\ell|^2)^2}{2\Sigma^2 (N-\ell)}\right) + O\left(1/\sqrt{N}\right)\right|{\bf 1}_{|V_\ell|\le R} + C \frac{|V_\ell|^b}{R^{b}} {\bf 1}_{|V_\ell|\ge R} \\
&\le C\left( \frac{R^2}{N} + \frac{R^4}{N} + O\left(1/\sqrt{N}\right)\right) {\bf 1}_{|V_\ell|\le R} + C \frac{|V_\ell|^b}{R^{b}} {\bf 1}_{|V_\ell|\ge R} ,
\end{aligned}
\end{equation}
for some $R>0$ and $b\ge0$.

Finally, choosing $R=N^{1/8}$ and $b=4$ one has
\begin{equation*}\label{}
\begin{aligned}
{\norm{F^N_\ell - f^{\otimes\ell}}}_{L^1_1} 
&= {\norm{(\theta^N_1\theta^N_2-1)f^{\otimes\ell}}}_{L^1_1} \\
&\le (\theta^N_2-1){\norm{\theta^N_1 \, f^{\otimes\ell}}}_{L^1_1} 
  + {\norm{(\theta^N_1-1)f^{\otimes\ell}}}_{L^1_1}  \\
&\le \frac{C}{N} {\norm{f^{\otimes\ell}}}_{L^1_1} 
+ \frac{C}{\sqrt{N}}  {\norm{f^{\otimes\ell}}}_{L^1_1} 
+ \frac{C}{\sqrt{N}} {\norm{f^{\otimes\ell}}}_{L^1_5}\\
&\leq \frac{C\ell}{N} {\norm{f}}_{L^1_1} 
+ \frac{C\ell}{\sqrt{N}}  {\norm{f}}_{L^1_1} 
+ \frac{C\ell}{\sqrt{N}}  {\norm{f}}_{L^1_5}.
\end{aligned}
\end{equation*}

\end{proof}

\section{Entropic and Fisher's information chaos}\label{sec:entropy}

We recall that in the Subsection~\ref{ssec:results} we defined the relative entropy and relative Fisher's information of a probability measure. Moreover, we defined stronger notions of chaos, namely the entropic chaos in Definition~\ref{def:ent-chaos} and the Fisher's information chaos in Definition~\ref{def:Fisher-chaos}. 
We prove in this section precise versions of point (ii) in Theorem~\ref{thm:intro-FN}, Theorem~\ref{thm:intro-GN} and Theorem~\ref{thm:intro-op}.

\subsection{Entropic chaos for the conditioned tensor product}\label{ssec:ent-FN}
We shall study now the entropic chaoticity of the probability measure $F^N = [ f^{\otimes N} ]_{\SS^N_\BB}$ with quantitative rate in the following theorem, which is a precise version of point $(ii)$ of Theorem~\ref{thm:intro-FN}.

\begin{thm}\label{thm:ent-chaos}
Let $f\in \PPP_{6}(\R^d)\cap L^p(\R^d)$ for some $p>1$ verify
$\int v f = 0$ and $\int|v|^2 f=d$.
Then,
the sequence of probabilities $F^N:=[f^{\otimes N}]_{\SS^N_\BB} \in \PPP(\SS^N_\BB)$ is entropically $f$-chaotic. 
More precisely, there exists $C>0$ such that we have
\begin{equation*}\label{eq:ent-chaos}
\begin{aligned}
\left|\frac{1}{N} \,  H(F^N|\gamma^N) -H(f|\gamma) \right| \leq \frac{C}{\sqrt{N}}.
\end{aligned}
\end{equation*}
\end{thm}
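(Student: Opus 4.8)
The plan is to exploit the explicit form $F^N = f^{\otimes N}/Z_N(f;\sqrt{dN},0)\,\gamma^N$ to reduce the relative entropy on $\SS^N_\BB$ to a one–dimensional integral, and then to control that integral using the asymptotics of $Z'_N$ from Theorem~\ref{thm:ZN} together with the pointwise description of $F^N_1$ obtained in the proof of Theorem~\ref{thm:FN-chaos}.

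First I would write the exact identity. Since $dF^N/d\gamma^N = f^{\otimes N}/Z_N(f;\sqrt{dN},0)$ and $F^N$ is symmetric,
$$\frac1N H(F^N|\gamma^N) = \int_{\R^d}\log f\,dF^N_1 - \frac1N\log Z_N(f;\sqrt{dN},0).$$
Because $\gamma^{\otimes N}$ equals the constant $(2\pi)^{-dN/2}e^{-dN/2}$ on $\SS^N_\BB$, one has $\log Z_N = \log Z'_N - \tfrac{dN}{2}\log(2\pi) - \tfrac{dN}{2}$; the hypothesis $\int|v|^2f = d$ forces $E = d$ and $\EE = 1$, so Theorem~\ref{thm:ZN} gives $Z'_N(f;\sqrt{dN},0) = \sqrt{2d}/\Sigma + O(N^{-1/2})$ and hence $\tfrac1N\log Z'_N = O(1/N)$. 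On the other hand, expanding $H(f|\gamma) = \int f\log f - \int f\log\gamma$ and again using $\int|v|^2 f = d$ gives $H(f|\gamma) = \int f\log f + \tfrac d2\log(2\pi) + \tfrac d2$. All Gaussian constants cancel and one is left with
$$\frac1N H(F^N|\gamma^N) - H(f|\gamma) = \int_{\R^d}(\log f)\,(dF^N_1 - f\,dv) - \frac1N\log Z'_N(f;\sqrt{dN},0),$$
so that it suffices to bound $\big|\int_{\R^d}(\log f)(dF^N_1 - f\,dv)\big| \le C/\sqrt N$.

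For this main estimate I would use the decomposition $F^N_1 = f\,\theta^N_1\theta^N_2$ established in the proof of Theorem~\ref{thm:FN-chaos}, where $\theta^N_2 = 1 + O(1/N)$, $\norm{\theta^N_1}_{L^\infty}\le C$, and for a free parameter $R > 0$ one has $|\theta^N_1(v) - 1| \le C(R^2 + R^4)/N + O(1/\sqrt N)$ for $|v|\le R$ and $|\theta^N_1(v) - 1|\le C|v|^4/R^4$ for $|v|\ge R$. Then
$$\Big|\int_{\R^d}(\log f)(F^N_1 - f)\,dv\Big| \le \int_{\R^d}|\log f|\,f\,|\theta^N_1\theta^N_2 - 1|\,dv \le \frac CN\int_{\R^d}|\log f|\,f\,dv + \int_{\R^d}|\log f|\,f\,|\theta^N_1 - 1|\,dv,$$
and cutting the last integral at $|v| = R := N^{1/8}$, the region $\{|v|\le R\}$ contributes $O(N^{-1/2})\int|\log f|\,f$, while the region $\{|v|\ge R\}$ contributes at most $CR^{-4}\int|\log f|\,f\,(1 + |v|^4) = CN^{-1/2}\int|\log f|\,f\,(1+|v|^4)$. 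Together with the $O(1/N)$ term coming from $Z'_N$, this yields the claimed rate, provided the weighted integrals $\int_{\R^d}(1 + |v|^4)\,f\,|\log f|\,dv$ are finite.

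The one real obstacle is therefore the unboundedness of $\log f$: I must check $\int_{\R^d}(1+|v|^4)\,f\,|\log f|\,dv < \infty$ (which in particular also gives $H(f|\gamma) < \infty$, as required in the definition of entropic chaos) under the sole hypothesis $f\in\PPP_6(\R^d)\cap L^p(\R^d)$. This I would do by splitting the domain into $\{f\ge1\}$, where $\log f \le C_\delta f^\delta$ for every $\delta > 0$, and $\{f < 1\}$, where $f\,|\log f| \le C_\delta f^{1-\delta}$; in both cases the resulting integral $\int f^{1\pm\delta}(1+|v|^4)\,dv$ is estimated by Hölder interpolation between $f\in L^p$ and $f(1+|v|^2)^3\in L^1$, using that $(1+|v|^2)^{-c/2}\in L^1(\R^d)$ for $c > d$, and is finite once $\delta$ is chosen small enough depending on $d$ and $p$ (in the spirit of the auxiliary Lemma~\ref{lem:aux}). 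Feeding this back into the preceding step closes the proof.
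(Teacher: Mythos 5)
Your proof is correct and follows the paper's skeleton — the explicit density of $F^N$ on $\SS^N_\BB$, the $Z'_N$ asymptotics from Theorem~\ref{thm:ZN}, the decomposition $F^N_1=\theta^N_1\theta^N_2\,f$ from the proof of Theorem~\ref{thm:FN-chaos}, and a cutoff at $R=N^{1/8}$ — but it contains a worthwhile simplification. By observing that $F^N_1$ and $f$ share mass $1$ and second moment $d$ (the former because $F^N$ lives on $\SS^N_\BB$, the latter by hypothesis), you get that $\log\gamma(v)=-\tfrac d2\log(2\pi)-\tfrac{|v|^2}{2}$ integrates to \emph{exactly zero} against $F^N_1-f$, so only $\int(\log f)(F^N_1-f)$ survives. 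The paper instead splits $\log(f/\gamma)=\log f-\log\gamma$, puts absolute values inside, and estimates $T_1=\int|\theta^N-1|\,f\,|\log\gamma|$ separately (even though $T_1$ without absolute values vanishes identically); your observation removes $T_1$ from the ledger entirely. For the remaining term the paper carries out a finer five-way domain split ($B_R$, $\{f\ge A\}$, $\{1\le f\le A\}$, $\{e^{-|v|^2}\le f\le1\}$, $\{f\le e^{-|v|^2}\}$) and optimizes over $A,R,k,m$; you instead absorb the cutoff into $|\theta^N_1-1|\le CN^{-1/2}(1+|v|^4)$ and reduce to a single weighted-integrability assertion $\int(1+|v|^4)\,f\,|\log f|\,dv<\infty$, which you handle by Hölder between $f\in L^p$ and $M_6(f)<\infty$. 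That assertion is correct (on $\{f\ge1\}$ one needs $1+3\delta\le p$, on $\{f<1\}$ one needs $\delta<2/(d+6)$, say), and it also supplies $H(f|\gamma)<\infty$, which the definition of entropic chaos requires. Net effect: same conclusion and rate, one fewer term to estimate, and the hypotheses $\PPP_6\cap L^p$ are used in essentially the same places.
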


\begin{proof}
We write 
\begin{equation*}\label{}
\begin{aligned}
\frac{1}{N} \,  H(F^N | \gamma^N) 
&= \frac{1}{N} \,  \int_{\SS^N_\BB} \left( \log \frac{dF^N}{d\gamma^N}   \right) \, dF^N \\
&= \frac{1}{N} \,  \int_{\SS^N_\BB} \left( \log \frac{f^{\otimes N}}{Z_N'(f;\sqrt{dN},0)\,\gamma^{\otimes N}}   \right) \, dF^N \\
&= \int_{\R^d} \left( \log \frac{f}{\gamma}   \right) dF^N_1 - \frac{1}{N} \, \log Z_N'(f;\sqrt{dN},0).
\end{aligned}
\end{equation*}

Thanks to the assumptions on $f$, we can use Theorem \ref{thm:ZN} to obtain
\begin{equation*}\label{}
\begin{aligned}
\frac{1}{N} \, H(F^N | \gamma^N) 
&= \int_{\R^d} \left( \log \frac{f}{\gamma}   \right) dF^N_1 + O(1/N).
\end{aligned}
\end{equation*}
Using \eqref{eq:FN_l}-\eqref{eq:FN_l-theta} we have $F^N_1(v) = \theta^N_1(v)\, \theta^N_2(v)\, f(v)$ or more precisely
\begin{equation*}\label{}
\begin{aligned}
F^N_1(v) = f(v) \left( e^{-\frac{|v|^2}{2N}-\frac{|v|^4}{2N}} + O\left(1/\sqrt{N}\right) \right) \left( 1 + O(1/N)\right)=:\theta^N(v)\, f(v),
\end{aligned}
\end{equation*}
and then
\begin{equation}\label{eq:HN-H}
\begin{aligned}
\frac{1}{N} \, H(F^N | \gamma^N) -
 H(f|\gamma) &= \int_{\R^d} (\theta^N -1)\, f \left( \log \frac{f}{\gamma}   \right) + O(1/N).
\end{aligned}
\end{equation}

We estime now the first term of the right-hand side, denoted by $T$,
\begin{equation*}\label{}
\begin{aligned}
|T| &\le \int_{\R^d} |\theta^N -1|\, f\, |\log \gamma|\, dv 
+ \int_{\R^d} |\theta^N -1|\, f \, |\log f|\, dv \\
&\le \int_{\R^d} |\theta^N -1|\, f\, C (1+|v|^2)\, dv 
+ \int_{\R^d} |\theta^N -1|\, f|\log f|\, dv\\
&=: T_1 + T_2.
\end{aligned}
\end{equation*}

We recall that (already computed in equation~\eqref{eq:thetaN-1})
$$
|\theta^N - 1| \le C
\left( \frac{R^2}{N} + \frac{R^4}{N} + \frac{1}{\sqrt{N}}\right) {\bf 1}_{|v|\le R} + C \frac{|v|^k}{R^k} {\bf 1}_{|V_\ell|\ge R}
$$
for some $k\ge 0$ and $R>0$. Then, for the first term we have
\begin{equation*}\label{}
\begin{aligned}
|T_1| &\le \int_{B_R} |\theta^N -1|\, f\ (1+|v|^2) + 
\int_{B_R^C} |\theta^N -1|\, f\, (1+|v|^2) \\
&\le \left( \frac{R^2}{N} + \frac{R^4}{N} + \frac{1}{\sqrt{N}} \right) {\norm{f}}_{L^1_2}
+ \frac{1}{R^k}\Big( M_{k}(f) + M_{k+2}(f)\Big)\\
&\le \frac{C_f}{\sqrt{N}}
\end{aligned}
\end{equation*}
where we have chosen $R=N^{1/8}$ and $k=4$.

For the last term $T_2$, define $A>1$ and $B_R = \{ v\in\R^d;\; |v|\le R  \}$, then we have
\begin{equation*}\label{}
\begin{aligned}
|T_2| &\le \int_{B_R} |\theta^N -1|\, f \, |\log f| + 
\int_{B_R^C} |\theta^N -1|\, f \, |\log f|\, {\bf 1}_{f\ge A} \\
&\quad + \int_{B_R^C} |\theta^N -1|\, f \, |\log f|\, {\bf 1}_{1\le f\le A}
+ \int_{B_R^C} |\theta^N -1|\, f \, |\log f|\, {\bf 1}_{e^{-|v|^2}\le f\le 1}\\
&\quad 
+ \int_{B_R^C} |\theta^N -1|\, f \, |\log f|\, {\bf 1}_{0\le f\le e^{-|v|^2}}.
\end{aligned}
\end{equation*}
Now we compute each one of this five terms.
First, we deduce that
\begin{equation*}\label{}
\begin{aligned}
|T_{2,1}| &\le \left( \frac{R^2}{N} + \frac{R^4}{N} + \frac{1}{\sqrt{N}} \right)\int_{B_R}  f \, |\log f|  = \left( \frac{R^2}{N} + \frac{R^4}{N} + \frac{1}{\sqrt{N}} \right) C_f.
\end{aligned}
\end{equation*}
For the second term, we use that $f\, |\log f| \le f^{(1+p)/2} \le f^p / A^{(p-1)/2}$ over 
$\{ f\ge A , |v| \ge R\}$, and then
\begin{equation*}\label{}
\begin{aligned}
|T_{2,2}| &\le \frac{{\norm{f}}_{L^p}^p}{A^{(p-1)/2}}.
\end{aligned}
\end{equation*}
Using $f\, |\log f| \le f\, |\log A|  $ over $ \{ 1\le f\le A , |v| \ge R\} $ for the third one, we obtain
\begin{equation*}\label{}
\begin{aligned}
|T_{2,3}| &\le \frac{\log A}{R^k}\, M_k(f).
\end{aligned}
\end{equation*}
Thanks to $f\, |\log f| \le f |v|^2 \le f |v|^{m+2}/R^{m}  $ over $ \{ e^{-|v|^2}\le f\le 1 , |v| \ge R\} $, we get
\begin{equation*}\label{}
\begin{aligned}
|T_{2,4}| &\le \frac{1}{R^{m}}\, M_{m+2}(f).
\end{aligned}
\end{equation*}
Finally, by $f\, |\log f| \le 4\sqrt{f} \le 4 e^{-|v|^2/2}$ over $ \{ 0\le f\le e^{-|v|^2} , |v| \ge R\} $
\begin{equation*}\label{}
\begin{aligned}
|T_{2,4}| &\le C\, e^{-R}.
\end{aligned}
\end{equation*}

Putting togheter all this terms, we have
\begin{equation*}\label{}
\begin{aligned}
|T_2| &\leq \left( \frac{R^2}{N} + \frac{R^4}{N} + \frac{1}{\sqrt{N}} \right) C_f 
+ \frac{{\norm{f}}_{L^p}^p}{A^{(p-1)/2}} + \frac{\log A}{R^k}\, M_k(f)
+ \frac{M_{m+2}(f)}{R^{m}} + C\, e^{-R} \\
&\leq  \frac{C_f}{\sqrt{N}}
\end{aligned}
\end{equation*}
choosing $A^{(p-1)/2}=R^k$, $R=N^{1/8}$, $k=6$ and $m=4$.

We have then $|T|\leq C \, N^{-1/2}$ and we conclude plugging it in \eqref{eq:HN-H}.

\end{proof}

\subsection{Relations between the different notions of chaos}
First of all, we start with the following lemma and we refer to \cite{CCLLV,HaurayMischler,CoursMischler} and the references therein for a proof.

\begin{lemma}\label{lem:EntropRepresentH} For all probabilities $\mu,\nu \in \PPP(Z)$ on a locally compact metric space, we have 
\begin{equation*}\label{lem:EntroRep1}
\begin{aligned}
H(\mu|\nu) &= \sup_{\varphi \in C_b(Z)} \left\{\int_Z \varphi \, d\mu - \log \left( \int_Z e^\varphi \, d\nu \right) \right\}
\\ 
&= \sup_{\varphi \in C_b(Z), \, \int_Z e^\varphi \, d\nu = 1 }\int_Z \varphi \, d\mu.
\end{aligned}
\end{equation*}
\end{lemma}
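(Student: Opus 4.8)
The plan is to establish the two displayed identities by proving the chain
\[
H(\mu|\nu)\;\ge\;\sup_{\varphi\in C_b(Z)}J(\varphi)\;\ge\;H(\mu|\nu),
\qquad J(\varphi):=\int_Z\varphi\,d\mu-\log\Big(\int_Z e^\varphi\,d\nu\Big),
\]
and then noting that the second identity is immediate: for $\varphi\in C_b(Z)$ the function $\tilde\varphi:=\varphi-\log\int_Z e^\varphi\,d\nu$ lies in $C_b(Z)$, satisfies $\int_Z e^{\tilde\varphi}\,d\nu=1$ and $\int_Z\tilde\varphi\,d\mu=J(\varphi)$, so $\sup_\varphi J(\varphi)$ coincides with the supremum of $\int_Z\varphi\,d\mu$ over the normalised class $\{\varphi\in C_b(Z):\int_Z e^\varphi\,d\nu=1\}$. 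For the bound $\sup_\varphi J(\varphi)\le H(\mu|\nu)$ there is nothing to do when $\mu\not\ll\nu$ (the right-hand side is then $+\infty$); otherwise, writing $g:=d\mu/d\nu$ and normalising so that $\int_Z e^\varphi\,d\nu=1$, one needs $\int_{\{g>0\}}\log(e^\varphi/g)\,d\mu\le 0$, which follows from Jensen's inequality for the concave logarithm together with $\int_{\{g>0\}}(e^\varphi/g)\,d\mu=\int_{\{g>0\}}e^\varphi\,d\nu\le 1$ (equivalently, integrate the Young inequality $st\le s\log s-s+e^t$ with $s=g$, $t=\varphi$).

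The reverse bound $\sup_\varphi J(\varphi)\ge H(\mu|\nu)$ is the substantial part, and I would split it into the absolutely continuous and the singular case. Suppose first $\mu\ll\nu$ with density $g$ and $H(\mu|\nu)<\infty$. The formal optimiser $\varphi=\log g$ realises $J(\log g)=H(\mu|\nu)$ but is neither bounded nor continuous, so the first move is to truncate: set $g_n:=(g\vee n^{-1})\wedge n$ and $\varphi_n:=\log g_n$, a bounded measurable function. Using that $g\log^{+}g\in L^1(\nu)$ (from $H(\mu|\nu)<\infty$) and that $-t\log t\le e^{-1}$ on $[0,1]$, dominated convergence gives $\int_Z\varphi_n\,d\mu\to\int_Z g\log g\,d\nu$ and $\int_Z e^{\varphi_n}\,d\nu=\int_Z g_n\,d\nu\to 1$, hence $J(\varphi_n)\to H(\mu|\nu)$. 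It then remains to upgrade bounded measurable test functions to bounded continuous ones: since $Z$ is locally compact metric, $\mu$ and $\nu$ are Radon, so Lusin's theorem provides, for each fixed $n$ and each $\varepsilon>0$, a $\psi\in C_b(Z)$ with $\|\psi\|_\infty\le\|\varphi_n\|_\infty$ and $\mu(\{\psi\ne\varphi_n\})+\nu(\{\psi\ne\varphi_n\})<\varepsilon$, which makes $|J(\psi)-J(\varphi_n)|$ as small as desired for fixed $n$.

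In the singular case $\mu\not\ll\nu$ one exhibits test functions driving $J$ to $+\infty$: pick $A$ with $\nu(A)=0$ and $\mu(A)>0$, use inner regularity to choose a compact $K\subseteq A$ with $\mu(K)>0$, outer regularity of $\nu$ to choose an open $U\supseteq K$ with $\nu(U)<\delta$, and Urysohn's lemma to get $\psi\in C_c(Z)\subseteq C_b(Z)$ with $\mathbf 1_K\le\psi\le\mathbf 1_U$. Then $\int_Z\psi\,d\mu\ge\mu(K)$ and $\int_Z e^{n\psi}\,d\nu\le 1+(e^n-1)\delta$, so $J(n\psi)\ge n\,\mu(K)-\log(1+e^n\delta)$, and taking $\delta=e^{-2n}$ makes the right-hand side tend to $+\infty$; hence $\sup_\varphi J(\varphi)=+\infty=H(\mu|\nu)$.

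The step I expect to be the main obstacle is this last upgrade from bounded measurable to bounded \emph{continuous} test functions, together with the Urysohn construction in the singular case: that is where the topological hypothesis on $Z$ (local compactness, hence regularity of $\mu$ and $\nu$) is genuinely used, while everything else reduces to convexity, truncation and dominated convergence. In the write-up one may of course shortcut the whole argument by citing the Donsker--Varadhan variational formula for relative entropy from \cite{CCLLV,HaurayMischler,CoursMischler}.
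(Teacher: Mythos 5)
The paper itself does not prove this lemma: it simply refers the reader to \cite{CCLLV,HaurayMischler,CoursMischler} for the Donsker--Varadhan variational formula, so there is no in-text argument to compare against. Your proposal is a self-contained proof along the standard lines (upper bound for $J$ via Jensen or the Young inequality $st\le s\log s-s+e^t$; lower bound via truncation of the density, then Lusin to pass from bounded measurable to $C_b$ test functions; singular case via Urysohn), and the final remark that one could shortcut by citing Donsker--Varadhan is exactly what the paper in fact does.

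Two points should be tightened. First, you treat separately ``$\mu\ll\nu$ with $H(\mu|\nu)<\infty$'' and ``$\mu\not\ll\nu$'', but omit the intermediate case $\mu\ll\nu$ with $H(\mu|\nu)=+\infty$. It follows by the same truncation: $(\log g_n)_+\uparrow(\log g)_+$ so monotone convergence gives $\int(\log g_n)_+\,d\mu\to+\infty$, while $\int(\log g_n)_-\,d\mu\le\int(\log g)_-\,d\mu\le e^{-1}$ stays bounded, hence $J(\varphi_n)\to+\infty$ and the Lusin step is unchanged; but as written this case is unaddressed. Second, the regularity facts you invoke (tightness of $\mu,\nu$, Lusin, Urysohn) need slightly more than ``locally compact metric'' in full generality --- they hold once $Z$ is in addition $\sigma$-compact or, as in all of the paper's applications, Polish. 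It would be cleaner either to add that hypothesis or to note that a finite Borel measure on a metric space is concentrated on a separable (hence, under local compactness, $\sigma$-compact) subset. With those two adjustments the argument is sound.
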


The following theorem is an adaptation of \cite[Theorem 17]{CCLLV}, where the same result is proved for probability measures on the usual sphere $\Sp^{N-1}(\sqrt N)$ in $\R^N$.

\begin{thm}
\label{thm:ent-semicont-general}
Consider $g\in\PPP_{6}(\R^d) \cap L^p(\R^d)$, for some $p\in(1,\infty]$, where 
$g$ satisfies $\int v g = 0$ and $\int|v|^2 g=d$.
Consider $G^N$ a probability measure on $\SS^N_\BB$ such that for some positive integer $\ell$, we have $G^N_\ell \wto \pi_\ell$ 
in $\PPP(\R^{d\ell})$ when $N$ goes to infinity.

Then, we have
\begin{equation*}\label{}
\begin{aligned}
\frac{1}{\ell} \, H(\pi_\ell|g^{\otimes \ell}) \leq \liminf_{N\to\infty} 
\frac{1}{N}\, H\left(G^N| \left[ g^{\otimes N}  \right]_{\SS^N_\BB}  \right) .
\end{aligned}
\end{equation*}

\end{thm}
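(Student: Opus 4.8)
The plan is to follow the classical sub-additivity/lower-semicontinuity argument for relative entropy, adapted from \cite[Theorem 17]{CCLLV}, using the variational formula of Lemma~\ref{lem:EntropRepresentH} as the main tool. Write $H^N := H\bigl(G^N \mid [g^{\otimes N}]_{\SS^N_\BB}\bigr)$ and recall that by definition $[g^{\otimes N}]_{\SS^N_\BB} = (Z_N(g;\sqrt{dN},0))^{-1} g^{\otimes N} \gamma^N$. Since $g^{\otimes N}/\gamma^{\otimes N}$ is related to the restricted tensor product via the constant $Z'_N(g;\sqrt{dN},0)$, and since Theorem~\ref{thm:ZN} gives $\frac1N \log Z'_N(g;\sqrt{dN},0) = O(1/N)\to 0$ (here using $E = \int |v|^2 g = d$), one first reduces the problem to controlling $H(G^N \mid [g^{\otimes N}]_{\SS^N_\BB})$ in terms of the entropy of $\ell$-marginals. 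Concretely, I would split the integer $N$ as $N = \ell q + r$ with $0 \le r < \ell$, group the $N$ variables into $q$ disjoint blocks of size $\ell$ (plus a remainder block), and use the symmetry of $G^N$ together with the sub-additivity of relative entropy with respect to product reference measures.

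The key step is the sub-additivity inequality: for a symmetric $G^N$ on $\SS^N_\BB$ one shows
\begin{equation*}
H\bigl(G^N \mid [g^{\otimes N}]_{\SS^N_\BB}\bigr) \ge q \, H\bigl(G^N_\ell \mid g^{\otimes \ell}\bigr) + o(N),
\end{equation*}
where the $o(N)$ absorbs the $\log Z'_N$ correction and the contribution of the remainder block. This is obtained by testing the variational formula of Lemma~\ref{lem:EntropRepresentH} against functions of the form $\varphi(V) = \sum_{j=1}^q \psi(v_{(j-1)\ell+1}, \dots, v_{j\ell})$ for $\psi \in C_b(\R^{d\ell})$ with $\int e^{\psi}\, dg^{\otimes \ell} = 1$: by symmetry each block contributes $\int \psi \, dG^N_\ell$, while the normalization $\int e^\varphi \, d[g^{\otimes N}]_{\SS^N_\BB}$ must be shown to be $\le e^{o(N)}$. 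The latter is the delicate point, because the reference measure on the sphere is \emph{not} a product; here one uses that $[g^{\otimes N}]_{\SS^N_\BB}$ has density proportional to $g^{\otimes N}$ with respect to $\gamma^N$, that $\gamma^N_{d\ell}$ is close to $\gamma^{\otimes d\ell}$ by Lemma~\ref{lem:gammaN-chaos}, and the asymptotics of $Z'_N$ from Theorem~\ref{thm:ZN} to decouple the blocks up to a sub-exponential factor.

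Dividing by $N$ and letting $N \to \infty$ with $q/N \to 1/\ell$, one gets
\begin{equation*}
\liminf_{N\to\infty} \frac1N H\bigl(G^N \mid [g^{\otimes N}]_{\SS^N_\BB}\bigr) \ge \frac1\ell \liminf_{N\to\infty} H\bigl(G^N_\ell \mid g^{\otimes \ell}\bigr).
\end{equation*}
Finally, since $G^N_\ell \wto \pi_\ell$ in $\PPP(\R^{d\ell})$ and $\mu \mapsto H(\mu \mid g^{\otimes \ell})$ is weakly lower semicontinuous (again via the supremum representation in Lemma~\ref{lem:EntropRepresentH}, being a sup of weakly continuous affine functionals), we conclude $\liminf_N H(G^N_\ell \mid g^{\otimes \ell}) \ge H(\pi_\ell \mid g^{\otimes \ell})$, which gives the claimed bound. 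The main obstacle is the block-decoupling estimate for the non-product reference measure $[g^{\otimes N}]_{\SS^N_\BB}$ — i.e.\ showing that $\int e^{\sum_j \psi_j}\, d[g^{\otimes N}]_{\SS^N_\BB} \le e^{o(N)} \prod_j \int e^{\psi_j}\, dg^{\otimes\ell}$ — which requires combining the explicit marginal formula of Lemma~\ref{lem:gammaNl}, the chaoticity estimate of Lemma~\ref{lem:gammaN-chaos}, and the precise asymptotics of $Z'_N$ from Theorem~\ref{thm:ZN}; handling the remainder block of size $r < \ell$ and the uniform integrability needed to pass to the limit are the remaining technical points.
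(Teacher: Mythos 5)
Your overall strategy matches the paper's: test the variational formula of Lemma~\ref{lem:EntropRepresentH} against a block test function $\Phi=\sum_j\varphi(v_{(j-1)\ell+1},\dots,v_{j\ell})$, use the symmetry of $G^N$ to reduce the linear term to $\frac{m}{N}\int\varphi\,dG^N_\ell$, and show the log-normalization term is $o(N)$. But the step you correctly single out as the main obstacle — bounding $\int e^\Phi\,d[g^{\otimes N}]_{\SS^N_\BB}$ — is left as a sketch, and the ingredients you name for it are not the ones that make it work. The paper does not invoke Lemma~\ref{lem:gammaN-chaos} here. Instead, after disintegrating $\gamma^N$ over the last $r$ variables so that the $m\ell$ block variables live on a Boltzmann sphere $\SS^{m\ell}\bigl(\sqrt{dN-|w|^2},-\bar w\bigr)$, the inner integral
$\int_{\SS^{m\ell}(\cdot)}\bigl(e^\varphi g^{\otimes\ell}/\gamma^{\otimes\ell}\bigr)^{\otimes m}\,d\gamma^{m\ell}_{\cdot}$
is exactly $Z'_m\bigl(e^\varphi g^{\otimes\ell};\sqrt{dN-|w|^2},-\bar w\bigr)$ \emph{for the new probability density} $e^\varphi g^{\otimes\ell}$ on $\R^{d\ell}$ (which integrates to one precisely because of the normalization $\int e^\varphi g^{\otimes\ell}=1$ imposed on $\varphi$). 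Theorem~\ref{thm:ZN} applied to this modified probability then cancels the sphere-volume factors coming from the disintegration of $\gamma^N$, so that the whole expression is $O(1)$ and $\frac1N\log\int e^\Phi\,d[g^{\otimes N}]_{\SS^N_\BB}\to 0$. Without this identification the ``block decoupling up to $e^{o(N)}$'' is unproved.

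A smaller but real issue is the quantifier order in your intermediate sub-additivity claim $H(G^N\mid\cdot)\ge q\,H(G^N_\ell\mid g^{\otimes\ell})+o(N)$. The $o(N)$ from the variational argument depends on the chosen $\psi$, while $H(G^N_\ell\mid g^{\otimes\ell})=\sup_\psi\int\psi\,dG^N_\ell$; to pass to the supremum you would need the error to be uniform in $\psi$, which is not established. The paper sidesteps this: it fixes a single $\varphi$ depending only on $\pi_\ell$ and $\eps$ (with $H(\pi_\ell\mid g^{\otimes\ell})\le\int\varphi\,d\pi_\ell+\eps$), takes $N\to\infty$ for that fixed $\varphi$ — where $\frac{m}{N}\int\varphi\,dG^N_\ell\to\frac1\ell\int\varphi\,d\pi_\ell$ follows directly from $\varphi\in C_b$ and $G^N_\ell\wto\pi_\ell$, with no appeal to lower semicontinuity of $H(\cdot\mid g^{\otimes\ell})$ — and finally lets $\eps\to 0$. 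Reordering your argument in this way, and filling in the $Z'_m$ identification above, would make your proposal coincide with the paper's proof.
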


\begin{proof}
Let fix a function $\varphi:=\varphi(v_1,\dots,v_\ell) \in C_b(\R^{d\ell})$ such that 
\begin{equation}\label{eq:EntropRepreHfgamma}
\int_{\R^{d\ell}} e^\varphi \, g^{\otimes \ell} = 1, \qquad   H(\pi_\ell|g^{\otimes \ell}) \leq \int_{\R^{d\ell}} \varphi \, d\pi_\ell + \eps
\end{equation}
for some $\eps >0$, which is possible thanks to Lemma~\ref{lem:EntropRepresentH}. We introduce the function 
$$
\Phi(v_1, ..., v_N) := \varphi(v_1,\dots,v_\ell) + \cdots + \varphi(v_{(m-1)\ell + 1},\dots,v_{m\ell}),
$$
where $m$ is the integer part of $N/\ell$, i.e. $N=m\ell+r$ with $0\leq r\leq \ell-1$.
Thanks again to Lemma~\ref{lem:EntropRepresentH} we have
\begin{equation*}
\begin{aligned}
 \frac{1}{N} \,  H\left(G^N|\left[ g^{\otimes N}  \right]_{\SS^N_\BB}\right) 
 &\ge  {1 \over N} \int_{\SS^N_\BB} \Phi \,   G^N(dV) -  {1 \over N} \log \left( \int_{\SS^N_\BB}e^\Phi \, d\left[ g^{\otimes N}  \right]_{\SS^N_\BB} \right).
\end{aligned}
\end{equation*}

For the first term of the right-hand side, using the symmetry of $G^N$ and the convergence of its $\ell$-marginal, we have
$$
{1 \over N} \int_{\SS^N_\BB} \Phi \,   G^N(dV) = \frac{m}{N} \int_{\R^{d\ell}} \varphi \, dG^N_\ell \xrightarrow[N\to\infty]{} \frac{1}{\ell} \int_{\R^{d\ell}} \varphi \, d\pi_\ell.
$$

We note that the second term of the right-hand side can be written in the following way
$$
\int_{\SS^N_\BB}e^\Phi\, d\left[ g^{\otimes N}  \right]_{\SS^N_\BB} = 
\frac{ 1}{Z'_N (g; \sqrt{dN},0)} \int_{\SS^N_\BB} e^\Phi  \left(\frac{g}{\gamma}\right)^{\otimes N}\,d\gamma^N
$$
since 
$$
\left[ g^{\otimes N}  \right]_{\SS^N_\BB} = \frac{g^{\otimes N}}{Z_N(g;\sqrt{dN},0)}\, \gamma^N.
$$

Applying Theorem \ref{thm:ZN} and thanks to $\int |v|^2\, g=d$ we get 
$$
Z'_N( g;\sqrt{dN},0) = \frac{\sqrt{2d}}{\Sigma(g)} \left(1+O(1/\sqrt{N})\right), 
$$
where $\Sigma(g)$ is given by \eqref{eq:hyp-f} applied to $g$,
and then
\begin{equation}\label{eq:limZNg}
\lim_{N\to\infty} \left( \frac{1}{N} \, \log Z'_N(g;\sqrt{dN},0)\right) = 0.
\end{equation}

For the other term, denoting $u=(v_1,\dots,v_{m\ell})$, $w=(v_{m\ell+1},\dots,v_{N})$ and 
$\bar w = v_{m\ell+1}+\cdots + v_{N} $, we write
$$
\begin{aligned}
&\int_{\SS^N(\sqrt{dN},0)} e^\Phi \left(\frac{g}{\gamma}\right)^{\otimes N}\,d\gamma^N \\
&\quad= \int_{\R^{dr}}  \frac{|\Sp^{d(N-r-1)-1}|}{|\Sp^{d(N-1)}|}\, 
\frac{\left(dN-|w|^2-\frac{|\bar w|^2}{N-r} \right)^{\frac{d(N-r-1)-2}{2}}}{(dN)^{\frac{d(N-1)-2}{2}}} \, \left(\frac{N}{N-r}\right)^{\frac{d}{2}} \, \left(\frac{g}{\gamma}\right)^{\otimes r}\\
&\qquad \times
\left\{   \int_{\SS^{\ell m}(\sqrt{dN-|w|^2},-\bar w)} \left(\frac{e^\varphi \, g^{\otimes \ell}}{\gamma^{\otimes \ell}}\right)^{\otimes m}\,d\gamma^N_{\sqrt{dN-|w|^2},-\bar w}  \right\} dw
\end{aligned}
$$
where the integral in $dw$ have to be taken over the region 
$$
\{ w\in \R^{dr} \,|\, dN - |w|^2 - |\bar w|^2/(\ell m) > 0   \}.
$$

We recognize that the last integral is equal to $Z'_m\left( e^\varphi \, g^{\otimes \ell}; \sqrt{dN-|w|^2}, -\bar w  \right)$ (where $Z'_m$ is a multi-dimensional version of $Z'_N$, obtained replacing $N$ by $m\ell$) and by Theorem \ref{thm:ZN} we have
$$
\begin{aligned}
&Z'_m\left( e^\varphi \, g^{\otimes \ell}; \sqrt{dN-|w|^2}, -\bar w  \right)
\\
&\quad = O(1) \times 
\frac{(d\ell m)^{\frac{d(\ell m-1)-2}{2}}}{\left( dN-|w|^2 - \frac{|\bar w|^2}{\ell m} \right)^{\frac{d(\ell m-1)-2}{2}}} \,
\frac{e^{-\frac{d\ell m}{2}}}{e^{-\frac{(dN-|w|^2)}{2}}} 
\end{aligned}
$$
and using \eqref{eq:measureS}, we get
$$
\begin{aligned}
\int_{\SS^N(\sqrt{dN},0)}e^\Phi \left(\frac{g}{\gamma}\right)^{\otimes N}\,d\gamma^N 
&= C\int_{\R^{dr}} e^{-\frac{|w|^2}{2}}  
\left(\frac{g}{\gamma}\right)^{\otimes r} dw\\
&= O(1)\times (2\pi)^{dr/2} \int_{\R^{dr}} g^{\otimes r} dw = O(1).
\end{aligned}
$$

With these estimates at hand, we can deduce 
$$
\liminf_{N\to\infty} \left( -\frac{1}{N} \,  \log\int_{\SS^N(\sqrt{dN},0)} e^\Phi \left(\frac{g}{\gamma}\right)^{\otimes N}\,d\gamma^N  \right) \ge 0
$$
and together with \eqref{eq:limZNg} we obtain
$$
\liminf_{N\to\infty} \frac{1}{N} \,  H(G^N|\left[ g^{\otimes N}  \right]_{\SS^N_\BB}) 
 \ge  \frac{1}{\ell} \int_{\R^{d\ell}} \varphi \, d\pi_\ell \ge \frac{1}{\ell} \, H(\pi_\ell | g^{\otimes \ell}) - \eps.
$$
Since $\eps$ is arbitrary, we can conclude letting $\eps\to 0$.

\end{proof}

Our aim now is to give an analogous result of Theorem~\ref{thm:ent-semicont-general} for the Fisher's information. 
However the strategy here is different, it is not based on the asymptotic behaviour of $Z'_N$ like before, but on a geometric approach following \cite{HaurayMischler}, where this analogous result is proved in the Kac's sphere setting.
To this purpose, firstly we shall present some results to conclude with the Theorem~\ref{thm:fisher-semicont}.

Consider $W=(w_1,\dots,w_N) \in \R^{dN}$ and $V=(v_1,\dots,v_N) \in \SS^N_\BB$, where we recall that
$v_i=(v_{i,\alpha})_{1\le \alpha\le d}$, $w_i=(w_{i,\alpha})_{1\le \alpha\le d} \in \R^d$ for all $1\le i\le N$.

Let $P_h$ be the projection on the hyperplane $\{ X\in \R^{dN} \;;\; \sum_{i=1}^{N} x_i=0 \}$, then it can be computed in the following way
\begin{equation*}
P_h W = W - \sum_{\alpha=1}^d \left(W\cdot \frac{e^N_\alpha}{|e^N_\alpha|} \right) \frac{e^N_\alpha}{|e^N_\alpha|},
\end{equation*}
where $e^N_\alpha = (e_\alpha, \dots, e_\alpha) \in \R^{dN}$ with $e_\alpha = (\delta_{\alpha\beta})_{1\le \beta\le d} \in \R^d$. Since $|e^N_\alpha|=\sqrt N$ we obtain
\begin{equation}\label{P_h}
P_h W = W - \frac{1}{N}\sum_{\alpha=1}^d (W\cdot e^N_\alpha)\, e^N_\alpha.
\end{equation}
Moreover, the projection $P_s$ on the sphere $\{ X\in \R^{dN} \;;\; \sum_{i=1}^{N} |x_i|^2=dN \}$ is given by
\begin{equation}\label{P_s}
P_s W = \sqrt{dN} \, \frac{W}{|W|}.
\end{equation}
Hence the projection $P_{\SS}$ on the Boltzmann's sphere $\SS^N_\BB$ can be computed as the composition of the others, i.e. $P_{\SS} = P_s \circ P_h$, more precisely
\begin{equation}\label{P_S}
\begin{aligned}
P_{\SS} W &= (P_s \circ P_h) W \\
&= \sqrt{dN} \, \frac{P_h W}{|P_h W|}\\
&= \sqrt{dN} \, \frac{W - \frac{1}{N}\sum_{\alpha=1}^d (W\cdot e^N_\alpha)\, e^N_\alpha}{\left|W - \frac{1}{N}\sum_{\alpha=1}^d (W\cdot e^N_\alpha)\, e^N_\alpha \right|},
\end{aligned}
\end{equation}
or in coordinates, for $1\le j\le N$ and $1\le \beta\le d$,
\begin{equation}\label{P_Sjbeta}
\begin{aligned}
(P_{\SS} W)_{j,\beta}
& = \frac{\sqrt{dN}}{\left|W - \frac{1}{N}\sum_{\alpha=1}^d (W\cdot e^N_\alpha)\, e^N_\alpha \right|} \, \left( w_{j,\beta} - \frac{1}{N}\sum_{k=1}^N w_{k,\beta} \right).
\end{aligned}
\end{equation}

Consider $V\in \SS^N_\BB$ and a smooth function $F$ defined on $\SS^N_\BB$. Then
the gradient $\nabla_h$ on $\{ X\in \R^{dN} \;;\; \sum_{i=1}^{N} x_i=0 \}$ is (recall that $\nabla$ stands for the usual gradient on $\R^{dN}$)
$$
\nabla_h F(V) = \nabla F(V) -\frac{1}{N} \sum_{i=1}^N \sum_{\alpha=1}^d \partial_{v_{i,\alpha}} F(V) \, e^N_\alpha.
$$
Moreover, the gradient $\nabla_s$ on the sphere $\{ X\in \R^{dN} \;;\; \sum_{i=1}^{N} |x_i|^2=dN \}$ is given by
$$
\nabla_s F(V) = \nabla F(V) -\left(\frac{V}{|V|} \cdot \nabla F(V) \right)\frac{V}{|V|}.
$$
Combining them we can compute the gradient on $\SS^N_\BB$, which is given by
\begin{equation}\label{Grad_SB}
\begin{aligned}
\nabla_{\SS} F(V) &= \nabla_h F(V) -\left(\frac{V}{|V|} \cdot \nabla_h F(V) \right)\frac{V}{|V|} \\
&= \nabla F(V) -\frac{1}{N} \sum_{i=1}^N \sum_{\alpha=1}^d \partial_{v_{i,\alpha}} F(V) \, e^N_\alpha \\
&\quad - \left[V \cdot \nabla F(V) -\frac{1}{N} \sum_{i=1}^N \sum_{\alpha=1}^d \partial_{v_{i,\alpha}} F(V) \, (e^N_\alpha \cdot V)    \right] \frac{V}{|V|^2} \\
&=\nabla F(V) -\frac{1}{N} \sum_{i=1}^N \sum_{\alpha=1}^d \partial_{v_{i,\alpha}} F(V) \, e^N_\alpha - \left[V \cdot \nabla F(V) \right] \frac{V}{|V|^2},
\end{aligned}
\end{equation}
since $e^N_\alpha \cdot V = \sum_{i=1}^N v_{i,\alpha} = 0$ because $V\in \SS^N_\BB$.

Let $\Phi$ be a smooth vector field on $\R^{dN}$, which written in composants is $\Phi(V) = (\Phi_1(V), \dots , \Phi_N(V))$ with $\Phi_i(V) = (\Phi_{i,1}(V), \dots, \Phi_{i,d}(V))$ for $1\le i \le N$. We denote by $\Div_{\SS}$ the divergence on $\SS^N_\BB$, then it can be computed in the following way

\begin{equation*}
\begin{aligned}
\Div_{\SS} \Phi(V) &= \sum_{j=1}^N \sum_{\beta=1}^d \nabla_{\SS} \Phi_{j,\beta}(V) \cdot e_{j,\beta},
\end{aligned}
\end{equation*}
where $e_{j,\beta} = (\delta_{jk}\delta_{\beta\gamma})_{(1\le k\le N)(1\le \gamma \le d)} \in \R^{dN}$. Using \eqref{Grad_SB} and after some simplifications we obtain
\begin{equation}\label{div_S}
\begin{aligned}
\Div_{\SS} \Phi(V) &= \Div \Phi(V) - \frac{1}{N}\sum_{j=1}^N \sum_{\beta=1}^d \sum_{i=1}^N \partial_{v_{i,\beta}} \Phi_{j,\beta}(V)
- \sum_{j=1}^N \sum_{\beta=1}^d  V \cdot \nabla \Phi_{j,\beta}(V) \, \frac{v_{j,\beta}}{|V|^2}.
\end{aligned}
\end{equation}

%
%
%
%
%

\begin{lemma}\label{lem:ipp}
Consider a function $F$ and a vector field $\Phi$, smooth enough, defined on $\SS^N_\BB$. 
Then the following integration by parts formula on $\SS^N_\BB$ holds
$$
\int_{\SS^N_\BB} \left\{ \nabla_\SS F(V) \cdot \Phi(V) + F(V) \Div_\SS \Phi(V) - \frac{d(N-1)-1}{dN} \, F(V) \Phi(V) \cdot V \right\} d\gamma^N(V) = 0.
$$
\end{lemma}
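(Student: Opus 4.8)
The plan is to recognise $\SS^N_\BB$ as a genuine round sphere and to apply the divergence theorem for submanifolds of Euclidean space, using the classical value of the mean curvature vector of a round sphere. Concretely, $\SS^N_\BB$ is the sphere of dimension $n:=d(N-1)-1$ and radius $R:=\sqrt{dN}$, centred at the origin, lying inside the linear subspace $H:=\{X\in\R^{dN}\,:\,\sum_{i=1}^N x_i=0\}$, which is a $d(N-1)$-dimensional Euclidean subspace and in particular totally geodesic in $\R^{dN}$. Hence the second fundamental form of $\SS^N_\BB$ as a submanifold of $\R^{dN}$ equals its second fundamental form as a hypersurface of $H$, namely $\mathrm{II}(X,Y)=-R^{-2}(X\cdot Y)\,V$ at the point $V$, so that the mean curvature vector is
$\vec H(V)=\sum_i \mathrm{II}(e_i,e_i)=-\tfrac{n}{R^2}\,V=-\tfrac{d(N-1)-1}{dN}\,V$
for any orthonormal tangent frame $(e_i)$. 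Since $\gamma^N$ is, up to the constant factor $|\SS^N_\BB|^{-1}$, the Riemannian volume measure of $\SS^N_\BB$, the vanishing of the integral is insensitive to this normalisation.

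Next I would invoke the divergence theorem on the compact boundaryless manifold $\SS^N_\BB$: for any smooth ambient-valued vector field $X$ along $\SS^N_\BB$,
$$\int_{\SS^N_\BB}\Div_\SS X\,d\gamma^N=-\int_{\SS^N_\BB}X\cdot\vec H\,d\gamma^N,$$
where $\Div_\SS X=\sum_i e_i\cdot D_{e_i}X$ in a local orthonormal tangent frame and $D$ is the flat derivative of $\R^{dN}$. This is obtained by writing $X=X^T+X^\perp$: the tangential part contributes $0$ by Stokes, while for the normal part one differentiates $X^\perp\cdot e_i\equiv 0$ along $\SS^N_\BB$ in a geodesic frame to get $\sum_i e_i\cdot D_{e_i}X^\perp=-X^\perp\cdot\vec H=-X\cdot\vec H$. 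One should also note that the operator $\Div_\SS$ of \eqref{div_S} coincides with $\sum_i e_i\cdot D_{e_i}\Phi$: indeed, from the definition $\Div_\SS\Phi=\sum_{j,\beta}\nabla_\SS\Phi_{j,\beta}\cdot e_{j,\beta}$ and \eqref{Grad_SB}, $\nabla_\SS\Phi_{j,\beta}$ is the orthogonal projection of $\nabla\Phi_{j,\beta}$ onto $T_V\SS^N_\BB$, and $\sum_i(\,\cdot\,,e_i)\,e_i$ is precisely that projection.

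Finally I would apply the identity with $X=F\,\Phi$. Directly from the definition $\Div_\SS(F\Phi)=\sum_{j,\beta}\nabla_\SS(F\Phi_{j,\beta})\cdot e_{j,\beta}$ and the Leibniz rule for the tangential gradient, one gets $\Div_\SS(F\Phi)=\nabla_\SS F\cdot\Phi+F\,\Div_\SS\Phi$ (using $\sum_{j,\beta}\Phi_{j,\beta}e_{j,\beta}=\Phi$), while $-X\cdot\vec H=-F\,\Phi\cdot\bigl(-\tfrac{d(N-1)-1}{dN}V\bigr)=\tfrac{d(N-1)-1}{dN}\,F\,\Phi\cdot V$. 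Substituting both into the divergence theorem and rearranging gives exactly the stated formula. The only genuinely delicate point is pinning down the mean curvature vector, i.e. both the coefficient $\tfrac{d(N-1)-1}{dN}$ and its sign; as a cross-check one can verify it from the coordinate expressions \eqref{P_S}, \eqref{Grad_SB}, \eqref{div_S}, or alternatively run the whole argument after projecting $\Phi$ onto $H$ by $P_h$ (which, by \eqref{Grad_SB} and since $V\in H$, alters none of the three integrands) and then applying the standard integration-by-parts formula on the round sphere $\Sp^{\,d(N-1)-1}(\sqrt{dN})\subset H$.
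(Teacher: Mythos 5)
Your argument is correct, and it is a genuinely different route from the paper's.

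The paper proceeds extrinsically: it first extends $F$ and $\Phi$ to a tubular neighbourhood of $\SS^N_\BB$ by composing with the projection $P_\SS$, multiplies by a cutoff $\chi(|P_h V|)$ in the radial direction, computes the flat divergence of the resulting compactly supported field on $\R^{dN}$, integrates it to zero, and then disintegrates the ambient Lebesgue integral over the family of spheres $\SS^N(\sqrt w,z)$ using the explicit change of variables of Lemma~\ref{lem:change}. The coefficient $\frac{d(N-1)-1}{dN}$ there appears from a one-variable integration by parts in the radial coefficient $(w-|z|^2/N)^{(d(N-1)-2)/2}$ against $\chi'$. Your proof instead works intrinsically on the closed manifold $\SS^N_\BB$: you identify $\SS^N_\BB$ as a round $(d(N-1)-1)$-sphere of radius $\sqrt{dN}$ sitting in the totally geodesic hyperplane $H$, read off the mean curvature vector $\vec H=-\frac{d(N-1)-1}{dN}V$, and then use the standard identity $\int\Div_\SS X\,d\gamma^N=-\int X\cdot\vec H\,d\gamma^N$ for ambient-valued $X$, obtained by splitting $X$ into tangential (Stokes) and normal (second fundamental form) parts. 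The two approaches produce the same coefficient for the same reason; yours makes its geometric origin transparent (it is $n/R^2$ for an $n$-sphere of radius $R$), while the paper's keeps the argument elementary and self-contained, relying only on the explicit change of variables already established in the paper. Your cross-check that $\Div_\SS$ from \eqref{div_S} coincides with $\sum_i e_i\cdot D_{e_i}\Phi$ and the Leibniz identity $\Div_\SS(F\Phi)=\nabla_\SS F\cdot\Phi+F\Div_\SS\Phi$ are both right and worth stating, since these are exactly the bridge between the coordinate definitions the paper uses and the coordinate-free divergence theorem you invoke. One small point to make explicit if you write this up: the sign convention in $\mathrm{II}(X,Y)=-R^{-2}(X\cdot Y)V$ should be matched to the convention under which $\int\Div_\SS X=-\int X\cdot\vec H$; you have done this consistently, but it is the place a sign error would most easily slip in.
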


\begin{proof}
The proof presented here is an adaptation of \cite[Lemma 4.16]{HaurayMischler}.
Let $\chi$ be a smooth function with compact support on $\R_+$ and define for $V\in \R^{dN}$
$$
\phi(V) := \chi(|P_h V|) \, (F\circ P_\SS) (V) \, (\Phi \circ P_\SS) (V).
$$
We can compute $\Div \phi(V)$ and after some simplifications using the formul\ae{} for the projections \eqref{P_h} and \eqref{P_S}, the gradient \eqref{Grad_SB} and the divergence \eqref{div_S} on $\SS^N_\BB$ we get
\begin{equation}\label{div_phi}
\begin{aligned}
\Div\phi(V) &= \frac{\chi'(|P_h V|)}{\sqrt{dN}} \, F(P_\SS V) \, P_\SS V \cdot \Phi(P_\SS V) 
 \\
&+ \chi(|P_h V|)  \, \nabla_{\SS} F(P_\SS V)  \cdot \Phi(P_\SS V) \,
\frac{\sqrt{dN}}{|P_h V|} \\
&+ \chi(|P_h V|)  \, F(P_\SS V) \, \Div_{\SS} \Phi(P_\SS V) \,
\frac{\sqrt{dN}}{|P_h V|}.
\end{aligned}
\end{equation}
Integrating \eqref{div_phi} we get
\begin{equation*}
\begin{aligned}
&\int_{\R^{dN}}   F(P_\SS V) \, P_\SS V \cdot \Phi(P_\SS V) \,\frac{\chi'(|P_h V|)}{\sqrt{dN}} \, dV \\
&\quad+ \int_{\R^{dN}} \Big[  \nabla_{\SS} F(P_\SS V)  \cdot \Phi(P_\SS V) +
    F(P_\SS V) \, \Div_{\SS} \Phi(P_\SS V) \Big] \, \chi(|P_h V|) \,
\frac{\sqrt{dN}}{|P_h V|} \, dV = 0.
\end{aligned}
\end{equation*}
Using the change of coordinates $V=(v_1,\dots,v_N) \to U=(u_1,\dots,u_N)$ given by Lemma~\ref{lem:change} and then the variables $w = \sum_{i=1}^N |u_i|^2$ and $z=\sqrt{N}u_N$, we obtain that the last expression is equal to
\begin{equation*}
\begin{aligned}
&\int_0^\infty\!\!\!\int_{\R^d}\left\{ 
\frac{|\Sp^{d(N-1)-1}|}{2\, N^{d/2}} \left(w-\frac{|z|^2}{N} \right)^{\frac{d(N-1)-2}{2}}
\int_{\SS^{N}(w,z)}   F(V) \,  V \cdot \Phi(V) \, d\gamma^N_{w,z} \right\}
\frac{\chi'\left(\sqrt{w-\frac{|z|^2}{N}}\right)}{\sqrt{dN}}  \, dz\, dw \\
&+ \int_0^\infty\!\!\!\int_{\R^d} \Bigg\{ 
\frac{|\Sp^{d(N-1)-1}|}{2\, N^{d/2}} \left(w-\frac{|z|^2}{N} \right)^{\frac{d(N-1)-2}{2}}
\\ &\qquad
\int_{\SS^{N}(w,z)}
\Big[  \nabla_{\SS} F(P_\SS V)  \cdot \Phi(P_\SS V) +
    F(P_\SS V) \, \Div_{\SS} \Phi(P_\SS V) \Big]\, d\gamma^N_{w,z} \Bigg\} 
    \chi\left(\sqrt{w-\frac{|z|^2}{N}}\right) \,
\frac{\sqrt{dN}}{\sqrt{w-\frac{|z|^2}{N}}} \, dz \, dw,
\end{aligned}
\end{equation*}
and then we get
\begin{equation*}
\begin{aligned}
&\int_0^\infty\!\!\!\int_{\R^d} 
 \left(w-\frac{|z|^2}{N} \right)^{\frac{d(N-1)-2}{2}} 
\frac{\chi'\left(\sqrt{w-\frac{|z|^2}{N}}\right)}{dN}  \, dz\, dw 
\left(\int_{\SS^{N}_\BB}   F(V) \,  V \cdot \Phi(V) \, d\gamma^N\right)\\
&+ \int_0^\infty\!\!\!\int_{\R^d} 
 \left(w-\frac{|z|^2}{N} \right)^{\frac{d(N-1)-3}{2}}
    \chi\left(\sqrt{w-\frac{|z|^2}{N}}\right)  dz \, dw
\left(\int_{\SS^{N}_\BB}
\Big[  \nabla_{\SS} F(V)  \cdot \Phi(V) +
F( V) \, \Div_{\SS} \Phi(V) \Big]\, d\gamma^N \right)
\\
&= 0.
\end{aligned}
\end{equation*}
Since we have
$$
\begin{aligned}
&\int_0^\infty\!\!\!\int_{\R^d} \left(w-\frac{|z|^2}{N} \right)^{\frac{d(N-1)-2}{2}} 
\chi'\left(\sqrt{w-\frac{|z|^2}{N}}\right)  dz\,dw =
\\
&\qquad -[d(N-1)-1] \int_0^\infty\!\!\!\int_{\R^d} \left(w-\frac{|z|^2}{N} \right)^{\frac{d(N-1)-3}{2}}
    \chi\left(\sqrt{w-\frac{|z|^2}{N}}\right)  dz\,dw,
\end{aligned}
$$
we obtain the result
$$
\int_{\SS^N_\BB} \left\{ \nabla_\SS F(V) \cdot \Phi(V) + F(V) \Div_\SS \Phi(V) - \frac{d(N-1)-1}{dN} \, F(V) \Phi(V) \cdot V \right\} d\gamma^N(V) = 0.
$$
\end{proof}

With these results at hand we are able to state the following theorem, which is the Fisher's information version of Theorem~\ref{thm:ent-semicont-general} and the proof is an adaptation of \cite[Theorem 4.15]{HaurayMischler}.

\begin{thm}
\label{thm:fisher-semicont}
Consider $G^N$ a probability measure on $\SS^N_\BB$ such that for some positive integer $\ell$, we have $G^N_\ell \wto \pi_\ell$ 
in $\PPP(\R^{d\ell})$ when $N$ goes to infinity.

Then, we have
\begin{equation*}\label{}
\begin{aligned}
\frac{1}{\ell} \, I(\pi_\ell|\gamma^{\otimes \ell}) \leq \liminf_{N\to\infty} \frac{1}{N}\, I(G^N|\gamma^N) .
\end{aligned}
\end{equation*}

\end{thm}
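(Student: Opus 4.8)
The plan is to deduce the lower bound from a dual (variational) representation of the relative Fisher's information combined with the integration by parts formula of Lemma~\ref{lem:ipp}, following the geometric approach of \cite[Theorem~4.15]{HaurayMischler}. The cornerstone is the elementary inequality: for every $N$ and every smooth vector field $\Phi$ on $\SS^N_\BB$,
$$
I(G^N|\gamma^N)\ \ge\ -2\int_{\SS^N_\BB}\Div_\SS\Phi\,dG^N+\frac{2\,(d(N-1)-1)}{dN}\int_{\SS^N_\BB}(V\cdot\Phi)\,dG^N-\int_{\SS^N_\BB}|\Phi|^2\,dG^N,
$$
which is trivial when $I(G^N|\gamma^N)=+\infty$ and otherwise follows by integrating the pointwise inequality $\rho^N|\nabla_\SS\log\rho^N-\Phi|^2\ge0$ (with $\rho^N=dG^N/d\gamma^N$) against $\gamma^N$ and transferring the cross term $\int\nabla_\SS\rho^N\cdot\Phi\,d\gamma^N$ by Lemma~\ref{lem:ipp} (a mollification of $\rho^N$ on the compact manifold $\SS^N_\BB$ legitimizing the integration by parts). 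Taking $\Phi=\nabla_\SS\log\rho^N$ shows this is in fact an equality once supremised over $\Phi$, but only the inequality is used.

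I would then pick the test fields block-wise. Fix a smooth, compactly supported vector field $\psi=(\psi_1,\dots,\psi_\ell):\R^{d\ell}\to\R^{d\ell}$, set $m:=\lfloor N/\ell\rfloor$ and $N=m\ell+r$ with $0\le r<\ell$, and let $\Phi$ be the vector field on $\R^{dN}$ that on the $j$-th block $V^{(j)}:=(v_{j\ell+1},\dots,v_{(j+1)\ell})$ ($j=0,\dots,m-1$) equals $\psi(V^{(j)})$ and vanishes on the last $r$ velocities; restrict it to $\SS^N_\BB$. Since the blocks occupy disjoint sets of coordinates, $|\Phi|^2$, $\Div\Phi$ and $V\cdot\Phi$ are exactly the block-wise sums of $|\psi|^2$, $\Div\psi$ and $V_\ell\cdot\psi$; hence, by the symmetry of $G^N$, the hypothesis $G^N_\ell\wto\pi_\ell$, the fact that $|\psi|^2$, $\Div\psi$, $V_\ell\cdot\psi\in C_b(\R^{d\ell})$ (here the compact support of $\psi$ is used), and since $m/N\to1/\ell$, one gets $N^{-1}\!\int|\Phi|^2dG^N\to\ell^{-1}\!\int|\psi|^2d\pi_\ell$, $N^{-1}\!\int\Div\Phi\,dG^N\to\ell^{-1}\!\int\Div\psi\,d\pi_\ell$ and $N^{-1}\!\int(V\cdot\Phi)\,dG^N\to\ell^{-1}\!\int(V_\ell\cdot\psi)\,d\pi_\ell$. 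Comparing $\Div_\SS\Phi$ with $\Div\Phi$ through formula~\eqref{div_S}, the two correction terms are, block-wise, sums of $m$ uniformly bounded functions of a single block divided by $N$ (respectively by $|V|^2=dN$), so they contribute $O(1/N)$ to $N^{-1}\!\int\Div_\SS\Phi\,dG^N$. Inserting these limits into the displayed bound and using $\tfrac{d(N-1)-1}{dN}\to1$ yields
$$
\liminf_{N\to\infty}\frac{1}{N}\,I(G^N|\gamma^N)\ \ge\ -\frac{2}{\ell}\int_{\R^{d\ell}}\big(\Div\psi-V_\ell\cdot\psi\big)\,d\pi_\ell-\frac{1}{\ell}\int_{\R^{d\ell}}|\psi|^2\,d\pi_\ell .
$$
The right-hand side equals $-\tfrac2\ell\int\Div_{\gamma^{\otimes\ell}}\psi\,d\pi_\ell-\tfrac1\ell\int|\psi|^2d\pi_\ell$ with $\Div_{\gamma^{\otimes\ell}}\psi:=\Div\psi-V_\ell\cdot\psi$ the divergence adjoint to $\nabla$ in $L^2(\gamma^{\otimes\ell})$, so supremising over $\psi$ and invoking the analogous variational representation of $I(\pi_\ell|\gamma^{\otimes\ell})$ on $\R^{d\ell}$ (which returns $+\infty$ when $\pi_\ell$ is not absolutely continuous, hence handles every case) concludes the proof.

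The step I expect to be the most delicate is the second one: one must check that the block field $\Phi$, which is tangent to neither the energy sphere nor the momentum hyperplane, is harmless, the key being a cancellation — the extra term $\frac{2(d(N-1)-1)}{dN}\int(V\cdot\Phi)\,dG^N$ produced by Lemma~\ref{lem:ipp} is precisely what promotes the flat divergence $\Div\psi$ to the Gaussian divergence $\Div\psi-V_\ell\cdot\psi$ in the limit, while the momentum directions enter only through the $O(1/N)$ terms of \eqref{div_S}. The remaining ingredients — the mollification justifying the integration by parts for $\rho^N$ and the density of smooth compactly supported vector fields in the variational formula on $\R^{d\ell}$ — are routine and already used in \cite{HaurayMischler}.
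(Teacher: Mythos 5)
Your proof is correct and takes essentially the same route as the paper: the paper also cites the variational representation from \cite{HaurayMischler}, transfers via Lemma~\ref{lem:ipp}, chooses the block test field $\Phi=(\varphi(V_{\ell,1}),\dots,\varphi(V_{\ell,q}),0)$, and shows the corrections from $\Div_\SS$ versus $\Div$ vanish after division by $N$. The only cosmetic differences are that you re-derive the one-sided lower bound directly from $\rho^N|\nabla_\SS\log\rho^N-\Phi|^2\ge0$ (rather than quoting the sup-representation), use a $|\Phi|^2$ normalization in place of $|\Phi|^2/4$ (a harmless rescaling of $\Phi$), and take the sup over compactly supported $\psi$ at the end instead of fixing an $\eps$-near-optimizer at the start.
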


\begin{proof}
Let us denote $G^N =: g^N \gamma^N$.
Using \cite{HaurayMischler} we have the following representation formula
\begin{equation*}
\begin{aligned}
I(G^N|\gamma^N) &= \int_{\SS^N_\BB} |\nabla_\SS \log g^N|^2 g^N \, d\gamma^N \\
&= \sup_{\Phi\in C^1_b(\R^{dN};\R^{dN})} \int_{\SS^N_\BB}\left( \nabla_\SS\log g^N\cdot \Phi - \frac{|\Phi|^2}{4}\right) g^N \, d\gamma^N 
\end{aligned}
\end{equation*}
and we obtain by Lemma~\ref{lem:ipp}
\begin{equation}\label{Fisher-rep}
\begin{aligned}
I(G^N|\gamma^N) 
&= \sup_{\Phi\in C^1_b(\R^{dN};\R^{dN})} \int_{\SS^N_\BB}\left(  \frac{d(N-1)-1}{dN} \, \Phi(V) \cdot V -  \Div_\SS \Phi(V) - \frac{|\Phi(V)|^2}{4} \right) g^N \, d\gamma^N.
\end{aligned}
\end{equation}
Furthermore for $\pi_\ell$ we have, also from \cite{HaurayMischler},
\begin{equation*}\label{}
\begin{aligned}
I(\pi_\ell|\gamma^{\otimes \ell}) 
= \sup_{\varphi\in C^1_b(\R^{d\ell};\R^{d\ell})} \int_{\R^{d\ell}}\left( \varphi\cdot V_\ell - \Div\varphi -\frac{|\varphi|^2}{4}\right) \pi_\ell. 
\end{aligned}
\end{equation*}
Let us fix $\eps>0$ and choose $\varphi$ such that
$$
\frac1\ell \, I(\pi_\ell|\gamma^{\otimes \ell}) -\eps
\le \frac1\ell \, \int_{\R^{d\ell}}\left( \varphi\cdot V_\ell - \Div\varphi -\frac{|\varphi|^2}{4}\right) \pi_\ell
$$
Denote $N = q\ell + r$, $0\le r < \ell$, and define $V_N = (V_{\ell,1}, \dots ,V_{\ell,q} ,V_r)$. Choosing $\Phi(V_N) := ( \varphi(V_{\ell,1}) , \dots , \varphi(V_{\ell,q}) ,0) \in C^1_b(\R^{dN};\R^{dN})$ we obtain from \eqref{Fisher-rep} and the symmetry of $G^N$
\begin{equation*}
\begin{aligned}
\frac1N \, I(G^N|\gamma^N) &\ge \frac1N \, \int_{\SS^N_\BB}\left(  \frac{d(N-1)-1}{dN} \, \Phi(V_N) \cdot V_N -  \Div_\SS \Phi(V_N) - \frac{|\Phi(V_N)|^2}{4} \right) G^N(dV_N) \\
&\ge \frac qN \, \int_{\R^{d\ell}}\left(  \frac{d(N-1)-1}{dN} \, \varphi(V_\ell) \cdot V_\ell -  \Div \varphi(V_\ell) - \frac{|\varphi(V_\ell)|^2}{4} \right) G^N_\ell(dV_\ell) + \frac{R(N)}{N},
\end{aligned}
\end{equation*}
with
$$
R(N) = \int_{\R^{d\ell}} \sum_{k=1}^\ell \sum_{i=1}^\ell \sum_{\beta=1}^d 
\left(  \frac1N \, \partial_{v_{i,\beta}} \varphi_{k,\beta} 
+ \frac{1}{dN} (\partial_{v_{i,\beta}} \varphi_{k,\beta}) v_{i,\beta} v_{k,\beta} \right)
G^N_\ell(dV_\ell).
$$
The last expression is bounded if $\nabla \varphi$ decreases rapidly enough at infinity. Hence, passing to the limit we obtain
$$
\begin{aligned}
\liminf_{N\to\infty} \frac1N \, I(G^N|\gamma^N) &\ge \frac1\ell \, \int_{\R^{d\ell}} 
\left( \varphi \cdot V_\ell - \Div \varphi - \frac{|\varphi|^2}{4}   \right) \pi_\ell \\
&\ge \frac1\ell I(\pi_\ell | \gamma^{\otimes \ell}) - \eps,
\end{aligned}
$$
and we conclude letting $\eps\to 0$.
\end{proof}

We can prove now precise versions of implications $(i)\Rightarrow (ii)$ and $(iii)\Rightarrow (iv)$ of Theorem~\ref{thm:intro-GN} as follows.

\begin{thm}\label{thm:EntropyToKac}
Consider $G^N \in \PPP(\SS^N_\BB)$ such that 
$G^N_1 \wto f$ in $\PPP(\R^d)$. We have the following properties:

\begin{enumerate}[(i)]

\item If $H(f|\gamma)<\infty$ and 
$
\displaystyle \lim_{N\to\infty} \frac{1}{N} \,  H(G^N | \gamma^N) = H(f | \gamma),
$
then $G^N$ is $f$-Kac's chaotic.

\item If $I(f|\gamma)<\infty$ and 
$
\displaystyle \lim_{N\to\infty} \frac{1}{N} \,  I(G^N | \gamma^N) = I(f | \gamma),
$
then $G^N$ is $f$-Kac's chaotic.
\end{enumerate}

\end{thm}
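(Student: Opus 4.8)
The plan is to treat statements (i) and (ii) in parallel, by a soft compactness argument combined with the lower semicontinuity results already established. Fix an integer $\ell\ge 1$; by Definition~\ref{def:chaos} it suffices to show $G^N_\ell\wto f^{\otimes\ell}$ in $\PPP(\R^{d\ell})$. The common starting point is that $G^N$ is supported on $\SS^N_\BB$, so the energy constraint $\sum_{i=1}^N|v_i|^2=dN$ together with the symmetry of $G^N$ forces $M_2(G^N_\ell)=\int_{\R^{d\ell}}\sum_{i=1}^\ell|v_i|^2\,dG^N_\ell=\ell d$ for every $N\ge\ell$. Hence $(G^N_\ell)_N$ is tight: along every subsequence one can extract a further subsequence converging weakly to some $\pi_\ell\in\PPP(\R^{d\ell})$; passing to the limit in $(G^N_\ell)_1=G^N_1\wto f$ (no mass escapes, by the uniform second moment bound) shows that $\pi_\ell$ is symmetric with each one-dimensional marginal equal to $f$.

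Next I would invoke the semicontinuity theorems along this subsequence. For (i), apply Theorem~\ref{thm:ent-semicont-general} with $g=\gamma$, which lies in $\PPP_6\cap L^\infty(\R^d)$ and satisfies $\int v\,\gamma=0$, $\int|v|^2\,\gamma=d$; since $\gamma^{\otimes N}$ is constant on $\SS^N_\BB$ one has $[\gamma^{\otimes N}]_{\SS^N_\BB}=\gamma^N$, so the theorem yields
\[
\frac1\ell\,H(\pi_\ell\,|\,\gamma^{\otimes\ell})\le\liminf_{N\to\infty}\frac1N\,H\bigl(G^N\,|\,\gamma^N\bigr)=H(f\,|\,\gamma)<\infty,
\]
the last equality being the hypothesis. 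For (ii), Theorem~\ref{thm:fisher-semicont} gives directly $\frac1\ell\,I(\pi_\ell\,|\,\gamma^{\otimes\ell})\le\liminf_N\frac1N\,I(G^N\,|\,\gamma^N)=I(f\,|\,\gamma)<\infty$. In particular $\pi_\ell$ is absolutely continuous with the relevant quantity finite, so the decompositions below are meaningful.

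The heart of the argument is then a rigidity (superadditivity) step exploiting that the reference measure is a product. For relative entropy the chain rule gives $H(\pi_\ell\,|\,\gamma^{\otimes\ell})=H\bigl(\pi_\ell\,\bigm|\,\bigotimes_{i=1}^\ell(\pi_\ell)_i\bigr)+\sum_{i=1}^\ell H\bigl((\pi_\ell)_i\,|\,\gamma\bigr)$, and for relative Fisher information the analogous identity $I(\pi_\ell\,|\,\gamma^{\otimes\ell})=I\bigl(\pi_\ell\,\bigm|\,\bigotimes_{i=1}^\ell(\pi_\ell)_i\bigr)+\sum_{i=1}^\ell I\bigl((\pi_\ell)_i\,|\,\gamma\bigr)$ holds because the cross terms $\int\nabla_{v_i}\log\bigl(\pi_\ell/\bigotimes_j(\pi_\ell)_j\bigr)\cdot\nabla_{v_i}\log\bigl((\pi_\ell)_i/\gamma\bigr)\,d\pi_\ell$ vanish once the variables other than $v_i$ are integrated out. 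Since every $(\pi_\ell)_i$ equals $f$, the second term on each right-hand side is $\ell H(f\,|\,\gamma)$ (resp.\ $\ell I(f\,|\,\gamma)$), which combined with the bound of the previous step forces $H\bigl(\pi_\ell\,\bigm|\,\bigotimes_i(\pi_\ell)_i\bigr)=0$ (resp.\ $I\bigl(\pi_\ell\,\bigm|\,\bigotimes_i(\pi_\ell)_i\bigr)=0$). In either case this null value, on the connected space $\R^{d\ell}$, forces $\pi_\ell=\bigotimes_{i=1}^\ell(\pi_\ell)_i=f^{\otimes\ell}$. Thus every subsequential limit of the tight sequence $(G^N_\ell)_N$ is $f^{\otimes\ell}$, so $G^N_\ell\wto f^{\otimes\ell}$ for every $\ell$, i.e.\ $G^N$ is $f$-Kac chaotic.

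I expect the only genuinely non-formal points to be the two used in the rigidity step: first, the identification $[\gamma^{\otimes N}]_{\SS^N_\BB}=\gamma^N$, which lets the general semicontinuity theorems apply to the uniform measure; and second, the exact decomposition of the relative Fisher information together with the implication ``zero relative Fisher information $\Rightarrow$ equality of the measures'' on $\R^{d\ell}$ (the entropy analogue being the classical strict case of Jensen's inequality). Tightness, by contrast, comes for free from the geometry of $\SS^N_\BB$ and uses neither the entropy nor the Fisher-information hypothesis.
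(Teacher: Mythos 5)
Your proposal takes essentially the same route as the paper: extract a subsequential weak limit $\pi_\ell$ of the tight sequence $(G^N_\ell)$, apply the lower semicontinuity results (Theorems~\ref{thm:ent-semicont-general} and~\ref{thm:fisher-semicont}, with $g=\gamma$ so that $[\gamma^{\otimes N}]_{\SS^N_\BB}=\gamma^N$), and combine with superadditivity to force $\pi_\ell=f^{\otimes \ell}$. Your explicit chain-rule decomposition $H(\pi_\ell|\gamma^{\otimes\ell})=H(\pi_\ell|f^{\otimes\ell})+\ell H(f|\gamma)$ is a reformulation of what the paper writes in-line (their display $\int f^{\otimes\ell}\bigl(z\log z-z+1\bigr)=0$ with $z=\pi_\ell/f^{\otimes\ell}$ is exactly $H(\pi_\ell|f^{\otimes\ell})=0$), so the entropy case (i) is the same argument.

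The one place your justification is imprecise is the Fisher case. You conclude from $I\bigl(\pi_\ell\,|\,\bigotimes_i(\pi_\ell)_i\bigr)=0$ that $\pi_\ell=\bigotimes_i(\pi_\ell)_i$ ``on the connected space $\R^{d\ell}$.'' Connectedness of the \emph{ambient} space is not the operative fact: vanishing of the relative Fisher information only gives $\nabla\log\bigl(\pi_\ell/f^{\otimes\ell}\bigr)=0$ $\pi_\ell$-a.e., hence local constancy of the density ratio on $\supp\pi_\ell$, and if $\supp f$ (hence $\supp f^{\otimes\ell}$) is disconnected this is not by itself enough to pin down the constant. For entropy this issue does not arise because $z\log z - z + 1>0$ pointwise unless $z=1$. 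For Fisher information the correct statement is the equality case of Carlen's superadditivity theorem, which the paper explicitly invokes via \cite{CarlenFisher}; you should cite that (or reprove its equality case) rather than appeal to connectedness of $\R^{d\ell}$. With that reference supplied, the proof matches the paper's.
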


\begin{proof}
Let us fix $\ell \in \N^\ast$. Since $G^N_1 \wto f$ in $\PPP(\R^d)$ we know by \cite[Proposition 2.2]{S6} that $G^N$ is tight. 
Then there exists a subsequence $G^{N'}$ and $\pi_\ell \in \PPP(\R^{d\ell})$ such that $G^{N'}_\ell \wto \pi_\ell$ in $\PPP(\R^{d\ell})$, when $N'$ goes to infinity (and in particular $\pi_1 = f$).

\subsubsection*{(i)} 
By Theorem~\ref{thm:ent-semicont-general} we have
$$
 \frac{1}{\ell} \, H(\pi_\ell | \gamma^{\otimes \ell}) \leq \liminf_{N'\to\infty}  \frac{1}{N'} \,  H(G^{N'} | \gamma^{N'}) = H(f | \gamma).
$$
Since we also have the reverse inequality by superadditivity of the entropy functional, we obtain
$$
\begin{aligned}
H(\pi_\ell | \gamma^{\otimes \ell}) -\ell H(f|\gamma) 
&= \int  \pi_\ell \log \frac{\pi_\ell}{\gamma^{\otimes \ell}}  - \ell \int f \log \frac{f}{\gamma}\\
&= \int  \pi_\ell \log \frac{\pi_\ell}{\gamma^{\otimes \ell}}  -  \int \pi_\ell \log \frac{f^{\otimes \ell}}{\gamma^{\otimes \ell}}\\
&= \int f^{\otimes \ell}\left( \frac{\pi_\ell}{f^{\otimes \ell}}\log\frac{\pi_\ell}{f^{\otimes \ell}}
 - \frac{\pi_\ell}{f^{\otimes \ell}} + 1 \right) \\
&= 0,
\end{aligned}
$$
which implies $\pi_\ell = f^{\otimes \ell}$ a.e. on $\{ f^{\otimes \ell} > 0\}$, since the function $z\mapsto z\log z - z + 1$ is equal to $0$ in $z=1$. Thanks to $\pi_\ell,f^{\otimes \ell} \in \PPP(\R^{d\ell})$, we obtain
$$
\int_{\{ f^{\otimes \ell} > 0\}} \pi_\ell = \int_{\{ f^{\otimes \ell} > 0\}} f^{\otimes \ell} = 1.
$$
It follows that $\pi_\ell = f^{\otimes \ell}$ a.e on $\R^{d\ell}$, so the whole sequence $G^N_\ell$ converges to $f^{\otimes \ell}$ and thus $G^N$ is $f$-chaotic.

\subsubsection*{(ii)}
The proof of point (ii) being similar, thanks to Theorem~\ref{thm:fisher-semicont} and the superadditivity of the Fisher's information \cite{CarlenFisher}, we skip it.

\end{proof}

Recall another notion of entropic chaos stated in \eqref{eq:cond1}, as proposed in \cite[Theorem 9 and Open Problem 11]{CCLLV} and \cite[Remark 7.11]{MMchaos}, for $G^N\in\PPP(\SS^N_\BB)$ and $f\in\PPP_6 \cap L^p (\R^d)$ with $p>1$, we consider the following property
\begin{equation}\label{strong-ent-chaos}
\lim_{N\to\infty} \frac{1}{N} \, H\left( G^N | [ f^{\otimes N} ]_{\SS^N_\BB} \right) = 0.
\end{equation}
Let us now investigate the relation between condition \eqref{strong-ent-chaos} and the entropic chaos (Definition~\ref{def:ent-chaos}) in the following result, which shows that, under some assumptions on $f$, they are equivalent.

\begin{thm}\label{thm:comp-ent}
Let $f\in \PPP_6(\R^d) \cap L^\infty(\R^d)$ and $G^N\in\PPP(\SS^N_\BB)$ such that $G^N_1\wto f$. Suppose further that  $f(v_1) \ge \exp(-\alpha|v_1|^2 + \beta)$ for some $\alpha>0$ and $\beta\in\R$.
Then the following asserstions are equivalent:
\begin{enumerate}[(i)]

\item
$
\displaystyle \lim_{N\to\infty} \frac1N \, H\left(G^N | [f^{\otimes N}]_{\SS^N_\BB} \right) = 0
$;
\smallskip
\item 
$
\displaystyle \lim_{N\to\infty} \frac1N \, H(G^N | \gamma^N ) = H(f|\gamma).
$
\end{enumerate}
\end{thm}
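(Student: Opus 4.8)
The plan is to reduce the equivalence to an elementary entropy decomposition identity together with the asymptotics of $Z'_N$ established in Theorem~\ref{thm:ZN}. I shall use the normalisation $\int_{\R^d}vf=0$, $\int_{\R^d}|v|^2 f=d$ compatible with $\SS^N_\BB$ (note $\int_{\R^d}v\,dG^N_1=0$ and $\int_{\R^d}|v|^2\,dG^N_1=d$ hold for every $N$, by symmetry and $V\in\SS^N_\BB$). Since $f(v)\ge e^{-\alpha|v|^2+\beta}>0$, the probability $[f^{\otimes N}]_{\SS^N_\BB}=f^{\otimes N}\gamma^N/Z_N(f;\sqrt{dN},0)$ is mutually absolutely continuous with $\gamma^N$ on $\SS^N_\BB$, so $H(G^N|\gamma^N)$ and $H(G^N|[f^{\otimes N}]_{\SS^N_\BB})$ are finite for exactly the same $N$, and I may assume both finite (otherwise both (i) and (ii) fail, $H(f|\gamma)$ being finite, see below). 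Expanding $\log\tfrac{dG^N}{d[f^{\otimes N}]_{\SS^N_\BB}}=\log\tfrac{dG^N}{d\gamma^N}-\sum_{i=1}^N\log f(v_i)+\log Z_N(f;\sqrt{dN},0)$, integrating against $G^N$, and using the symmetry of $G^N$, the fact that $\gamma^{\otimes N}$ equals the constant $(2\pi)^{-dN/2}e^{-dN/2}$ on $\SS^N_\BB$ (which relates $Z_N$ to $Z'_N$) and the identity $\int_{\R^d}|v|^2\,dG^N_1=\tfrac1N\int_{\SS^N_\BB}|V|^2\,dG^N=d$ (which relates $\int\log f\,dG^N_1$ to $\int\log\tfrac f\gamma\,dG^N_1$), I arrive at
\[
\frac1N\,H(G^N|\gamma^N)=\frac1N\,H\bigl(G^N\,\big|\,[f^{\otimes N}]_{\SS^N_\BB}\bigr)+\int_{\R^d}\log\frac f\gamma\,dG^N_1-\frac1N\,\log Z'_N(f;\sqrt{dN},0).
\]

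By Theorem~\ref{thm:ZN} applied to $f$ (only the normalisation $\int vf=0$, $\int|v|^2 f=d$ enters the statement needed) one has $\frac1N\log Z'_N(f;\sqrt{dN},0)\to0$. Granting the convergence
\[
\int_{\R^d}\log\frac f\gamma\,dG^N_1\xrightarrow[N\to\infty]{}H(f|\gamma),
\]
the identity above then reads $\frac1N H(G^N|\gamma^N)=\frac1N H(G^N|[f^{\otimes N}]_{\SS^N_\BB})+H(f|\gamma)+o(1)$, which immediately yields the equivalence of (i) and (ii). The whole theorem is therefore reduced to this single convergence, and it is here that the hypotheses $f\in L^\infty$ and $f(v)\ge e^{-\alpha|v|^2+\beta}$ are used.

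To prove this convergence I would set $\psi:=\log\tfrac f\gamma=\log f+\tfrac d2\log(2\pi)+\tfrac{|v|^2}{2}$ and bracket it between two quadratics: the $L^\infty$ bound gives $\psi(v)\le\log\norm{f}_{L^\infty}+\tfrac d2\log(2\pi)+\tfrac{|v|^2}{2}$, and the Gaussian lower bound gives $\psi(v)\ge\beta+\tfrac d2\log(2\pi)+(\tfrac12-\alpha)|v|^2$, so that $|\psi(v)|\le C(1+|v|^2)$; in particular $\psi\in L^1(f\,dv)$ and $H(f|\gamma)=\int\psi\,f<\infty$. Since $\int_{\R^d}|v|^2\,dG^N_1=d=\int_{\R^d}|v|^2 f$ for all $N$ and $G^N_1\wto f$, no mass escapes to infinity and $|v|^2$ is uniformly integrable along $(G^N_1)_N$. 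One then splits $\psi=\psi\,{\bf 1}_{\{|v|\le R\}}+\psi\,{\bf 1}_{\{|v|>R\}}$: the tail is dominated by $C\int_{\{|v|>R\}}(1+|v|^2)\,dG^N_1$, which is uniformly small in $N$ once $R$ is large thanks to the uniform integrability (and likewise with $G^N_1$ replaced by $f$), while for the bounded truncated part one passes to the limit using the weak convergence $G^N_1\wto f$. Letting $N\to\infty$ and then $R\to\infty$ gives $\int\psi\,dG^N_1\to\int\psi\,f=H(f|\gamma)$.

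I expect the main obstacle to be precisely this last step: transferring the weak convergence $G^N_1\wto f$ through the unbounded, quadratically growing and a priori only measurable integrand $\log\tfrac f\gamma$. This is exactly where the two structural hypotheses on $f$ are indispensable — the $L^\infty$ bound together with the Gaussian lower bound pin $\log\tfrac f\gamma$ between two quadratics, so that only a quadratic tail must be controlled — and where the rigidity of the Boltzmann sphere enters decisively: the exact energy constraint $\int_{\R^d}|v|^2\,dG^N_1=d$ forbids any loss of mass at infinity and supplies the uniform integrability that makes the truncation work. Everything else is either a direct computation (the entropy identity) or a citation (the control of $Z'_N$ via Theorem~\ref{thm:ZN}).
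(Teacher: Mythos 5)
Your overall route — the entropy decomposition relating $H(G^N|\gamma^N)$, $H(G^N|[f^{\otimes N}]_{\SS^N_\BB})$, $\int\log\frac{f}{\gamma}\,dG^N_1$ and $\log Z'_N$, combined with the asymptotics of $Z'_N$ from Theorem~\ref{thm:ZN} — is precisely the paper's, and your entropy identity is equivalent to the one used there. However, your reduction to the convergence $\int\log\tfrac f\gamma\,dG^N_1\to H(f|\gamma)$ has a genuine gap in the truncated part: you pass $G^N_1\wto f$ (narrow convergence in $\PPP(\R^d)$) against the test function $\psi\,{\bf 1}_{\{|v|\le R\}}$ with $\psi=\log\tfrac f\gamma$, but this function is only bounded and measurable (the hypotheses give $f\in L^\infty$ and $f\ge e^{-\alpha|v|^2+\beta}$, no continuity), and narrow convergence does \emph{not} allow testing against bounded measurable functions. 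The structural hypotheses on $f$ control the quadratic growth but say nothing about continuity, so this is not, as you assert, the place where they do all the work.

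What is missing — and what the paper's proof supplies — is a uniform bound on the one-particle entropy $H(G^N_1)$, which converts narrow convergence into weak $L^1$ convergence (Dunford--Pettis / de la Vall\'ee Poussin) and thereby permits testing against any $L^\infty$ function. Concretely, one shows that under \emph{either} hypothesis (i) or (ii) the normalized entropy $N^{-1}H(G^N|\gamma^N)$ is bounded (for (i) this comes from the very same entropy identity, using $f\in L^\infty$), then invokes the sub-additivity inequality $H(G^N_1|\gamma^N_1)\le C\,N^{-1}H(G^N|\gamma^N)$ from \cite{BartheCEM2006}, and finally compares $\gamma^N_1$ to $\gamma$ via Lemmas~\ref{lem:gammaNl} and~\ref{lem:gammaN-chaos} to conclude that $H(G^N_1)$ is bounded. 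With that bound in hand, $\int\chi_R(G^N_1-f)\log f\to 0$ is legitimate. Your tail estimate (using the exact constraint $\int|v|^2\,dG^N_1=d$ and $M_6(f)<\infty$ to get uniform integrability of $|v|^2$) is fine and matches the paper's; it is the bounded part, not the tail, that requires more than narrow convergence of probability measures.
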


\begin{rem}
We remark that both conditions $(i)$ and $(ii)$ imply that $G^N$ is $f$-chaotic. Indeed, in \cite[Theorem 19]{CCLLV} is proved that $(i)$ implies the $f$-chaoticity of $G^N$ in the Kac's sphere framework, the generalization to the Boltzmann's sphere case is straightforward. Finally, 
the fact that condition $(ii)$ implies that $G^N$ is $f$-chaotic follows from Theorem~\ref{thm:EntropyToKac}.
\end{rem}

\begin{proof}

Denote $G^N =: g^N \gamma^N$ and $F^N = [f^{\otimes N}]_{\SS^N_\BB} =: f^N \gamma^N$. Then we write
\begin{equation}\label{eq:HGN}
\begin{aligned}
H(G^N | \gamma^N ) &= \int_{\SS^N_\BB} \left(\log \frac{g^N}{f^N}\right) g^N \, d\gamma^N + \int_{\SS^N_\BB} \left(\log f^N\right) g^N \, d\gamma^N \\
&= H(G^N | [f^{\otimes N}]_{\SS^N_\BB} )  + \int \log f^{\otimes N} \, dG^N 
- \int \log \gamma^{\otimes N} \, dG^N  - \log Z'_N(f;\sqrt{dN},0)\\
&= H(G^N | [f^{\otimes N}]_{\SS^N_\BB} )  + N\int_{\R^d} \log f \, dG^N_1 
+\frac{dN}{2}(\log 2\pi + 1)  - \log Z'_N(f;\sqrt{dN},0)
\end{aligned}
\end{equation}
using the symmetry of $G^N$, the explicit formula for $\gamma^{\otimes N}$ and the fact that $M_2(G^N) = dN$.
Since $M_2(f)=d$, we obtain
\begin{equation*}
\begin{aligned}
\frac1N\,H(G^N | \gamma^N ) - H(f | \gamma)
&= \frac1N\, H(G^N | [f^{\otimes N}]_{\SS^N_\BB} )  + \int_{\R^d}(G^N_1-f) \log f -\frac1N\,\log Z'_N(f;\sqrt{dN},0).
\end{aligned}
\end{equation*}
The third term of the right-hand side goes to $0$ as $N\to\infty$ thanks to Theorem~\ref{thm:ZN}. Hence we only need to prove that the second term of the right-hand side vanishes as $N\to\infty$, which implies that $(i)$ is equivalent to $(ii)$. 

With the assumptions on $f$ we obtain $|\log f| \le \log \norm{f}_{L^\infty} + \alpha|v|^2 + \beta \le C_1(1+|v|^2)$. Consider $R>1$ and we have 
$$
\int_{|v|>R} (1+|v|^2)f < \frac{1}{R^4}\int_{|v|>R} |v|^4 f + \frac{1}{R^4}\int_{|v|>R} |v|^6 f \le C_2 R^{-4} .
$$ 
Let $\chi_R$ be a smooth function such that $0\le \chi_R \le 1$, $\chi_R(v)=1$ for $|v|\le R $ and $\chi_R(v)=0$ for $|v|\ge R+1$. We can split the integral to be estimated in the following way
\begin{equation}\label{chi}
\int_{\R^d}(G^N_1-f) \log f = \int_{\R^d}\chi_R (G^N_1-f) \log f 
+ \int_{\R^d}(1-\chi_R)(G^N_1-f) \log f .
\end{equation}

Let us show first that $H(G^N_1)=\int G^N_1\log G^N_1$ is bounded. If we assume condition $(ii)$ then $N^{-1} H(G^N| \gamma^N)$ is bounded. On the other hand, if we assume $(i)$, from \eqref{eq:HGN} we have
$$
\frac1N\,H(G^N | \gamma^N ) 
\leq \frac1N\, H(G^N | [f^{\otimes N}]_{\SS^N_\BB} )  + \log\norm{f}_{L^\infty} 
+\frac{dN}{2}(\log 2\pi + 1)  - \frac1N\,\log Z'_N(f;\sqrt{dN},0),
$$
and again $N^{-1} H(G^N| \gamma^N)$ is bounded. Moreover, we obtain thanks to \cite{BartheCEM2006} that 
$$
{H(G^N_1 | \gamma^N_1)} \le C \, \frac{H(G^N | \gamma^N) }{N}
$$
for some $C>0$ and can write 
$$
H(G^N_1 | \gamma)=H(G^N_1 | \gamma^N_1) + \int \log \frac{\gamma^N_1}{\gamma} \, G^N_1,
$$
which is bounded thanks to the explicit computation of $\gamma^N_1$ in Lemma~\ref{lem:gammaNl} and to the Lemma~\ref{lem:gammaN-chaos}. We deduce, since $H(G^N_1|\gamma) = H(G^N) + d(\log 2\pi + 1)/2$, that $H(G^N_1)$ is bounded either if we assume $(i)$ or $(ii)$.

Then, for the first term of \eqref{chi}, since $\chi_R \log f$ is a bounded function, $G^N_1$ converges weakly to $f$ in $\PPP(\R^d)$ and $H(G^N_1)$ is bounded, we obtain that $\int \chi_R (G^N_1-f) \log f\to 0$ as $N\to\infty$. For the second term of \eqref{chi} we write (recall that $\int (1+|v|^2)G^N_1 = 1+d = \int (1+|v|^2)f$ )
$$
\begin{aligned}
\Abs{\int_{\R^d}(1-\chi_R)(G^N_1-f) \log f} &\le C_1\int_{\R^d}(1-\chi_R)(1+|v|^2)(G^N_1+f) \\
&\le C_1 C_2 R^{-4} + C_1(1+d) - C_1\int_{\R^d}\chi_R(1+|v|^2)G^N_1 .
\end{aligned}
$$
The function $\chi_R(1+|v|^2)$ being bounded and continuous, we know that $\int \chi_R(1+|v|^2) (G^N_1-f)\to 0$ as $N\to\infty$. Thus passing to the limit in the last expression we obtain
$$
\begin{aligned}
\limsup_{N\to\infty}\Abs{\int_{\R^d}(1-\chi_R)(G^N_1-f) \log f} &\le C_1 C_2 R^{-4} + C_1(1+d) - C_1\int_{\R^d}(\chi_R)(1+|v|^2)f \\
&\le 2C_1 C_2 R^{-4}
\end{aligned}
$$
which concludes the proof letting $R\to\infty$.

\end{proof}

\begin{rem}
In the setting of the Kac's sphere (usual sphere $\Sp^{N-1}(\sqrt N)$), we find in \cite[Theorem 21]{CCLLV} a proof of $(i)$ implies $(ii)$ without the assumption $f(v_1) \ge \exp(-\alpha|v_1|^2 + \beta)$. We can adapt it to our case in the following way. 

\begin{proof}[Proof of $(i)\Rightarrow (ii)$]
We write from \eqref{eq:HGN} and for $\delta>0$
\begin{equation*}
\begin{aligned}
\frac1N\,H(G^N | \gamma^N ) &\le \frac1N\,H(G^N | [f^{\otimes N}]_{\SS^N_\BB} )  + \int \log (f+\delta) \, G^N_1 
+\frac{d}{2}(\log 2\pi + 1)  - \frac1N\,\log Z'_N(f;\sqrt{dN},0).
\end{aligned}
\end{equation*}
Since $\log(f+\delta)$ is a bounded function thanks to $f\in L^\infty$, $H(G^N_1)$ is bounded and $G^N_1\wto f$ in $\PPP(\R^d)$ we have $\int \log (f+\delta) \, G^N_1 \to \int \log (f+\delta) \, f$ as $N\to\infty$ . We can pass to the limit $N\to \infty$ to obtain
\begin{equation*}
\begin{aligned}
\limsup_{N\to\infty}\frac1N\,H(G^N | \gamma^N ) &\le  \int \log (f+\delta) \, f 
+\frac{d}{2}(\log 2\pi + 1).
\end{aligned}
\end{equation*}
Now letting $\delta\to 0$, by dominated convergence we obtain
\begin{equation*}
\begin{aligned}
\limsup_{N\to\infty}\frac1N\,H(G^N | \gamma^N ) &\le  \int f\log f  
+\frac{d}{2}(\log 2\pi + 1) = H(f|\gamma),
\end{aligned}
\end{equation*}
and we conclude with this estimate togheter with
$$
H(f|\gamma) \le \liminf_{N\to\infty}\frac1N\,H(G^N | \gamma^N )
$$
from Theorem~\ref{thm:ent-semicont-general}.
\end{proof}
\end{rem}

%
%
%

\subsection{On a more general class of chaotic probabilities}
In the subsection~\ref{ssec:ent-FN} we have constructed a particular probability measure on $\SS^N_\BB$ that is entropically chaotic. Hence, a natural question is whether it is true for a more general class of probabilities on the Boltzmann's sphere. Theorem~\ref{thm:ent-chaos-general}, which is a precise version of $(ii)\Rightarrow (iii)$ in Theorem~\ref{thm:intro-GN}, gives an answer with a quantitative rate.

First of all, let us present some results concerning different forms of measuring chaos that will be useful in te sequel.

\begin{lemma}\label{lem:W2toW1}
Consider $f,g \in \PPP(\R^d)$ and $F^N,G^N \in \PPP(\R^{dN})$. Let us define $M_k(F,G):= M_k(F) + M_k(G)$.

For any $k\ge 2$ we have
\begin{equation}\label{W2W1}
W_2(f,g) \le 2^{\frac32} \, M_k(f,g)^{\frac{1}{2(k-1)}} \, W_1(f,g)^{\frac{k-2}{2(k-1)}}
\end{equation}
and
\begin{equation}\label{W2W1N}
\frac{W_2(F^N,G^N)}{\sqrt N} \le 2^{\frac32} \, \left(\frac{M_k(F^N,G^N)}{N}\right)^{\frac{1}{2(k-1)}} \, \left( \frac{W_1(F^N,G^N)}{N}\right)^{\frac{k-2}{2(k-1)}}.
\end{equation}
\end{lemma}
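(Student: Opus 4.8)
The plan is to prove \eqref{W2W1} by a standard interpolation argument on an optimal transport plan, and then deduce \eqref{W2W1N} as a corollary by a scaling/rescaling trick. For \eqref{W2W1}, let $\pi \in \Pi(f,g)$ be an optimal coupling for $W_1$, so that $\int_{\R^d\times\R^d} |x-y|\, d\pi(x,y) = W_1(f,g)$. The key elementary inequality is that for any $a \ge 0$ and any exponent, $|x-y|^2 = |x-y|^{\frac{k-2}{k-1}} \cdot |x-y|^{\frac{k}{k-1}}$, so I would split the quadratic cost using this factorization together with Hölder's inequality with conjugate exponents $\frac{k-1}{k-2}$ and $k-1$ (valid since $k \ge 2$, with the degenerate case $k=2$ handled directly since then the first factor disappears). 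This gives
\begin{equation*}
\int |x-y|^2 \, d\pi \le \left( \int |x-y|\, d\pi \right)^{\frac{k-2}{k-1}} \left( \int |x-y|^k \, d\pi \right)^{\frac{1}{k-1}}.
\end{equation*}
Then I would bound $|x-y|^k \le 2^{k-1}(|x|^k + |y|^k)$, so $\int |x-y|^k\, d\pi \le 2^{k-1} M_k(f,g)$, and since $\pi$ is a (possibly suboptimal) coupling for $W_2$, $W_2(f,g)^2 \le \int |x-y|^2\, d\pi$. Combining and taking square roots yields $W_2(f,g) \le 2^{\frac{k-1}{2(k-1)}} M_k(f,g)^{\frac{1}{2(k-1)}} W_1(f,g)^{\frac{k-2}{2(k-1)}} = \sqrt{2}\, M_k(f,g)^{\frac{1}{2(k-1)}} W_1(f,g)^{\frac{k-2}{2(k-1)}}$, which is even slightly stronger than the stated constant $2^{3/2}$; I will just keep $2^{3/2}$ for uniformity.

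For \eqref{W2W1N}, I would apply the same argument with $\R^d$ replaced by $\R^{dN}$ (the distance $d_{E^N}$ on $E^N$ used in the definition of $W_p$ is the $\ell^1$-sum $\sum |x_i - y_i|$; note that the interpolation argument above works verbatim for any metric measure space, using the optimal $W_1$-plan on $\R^{dN}$ and the bound $d_{E^N}(X,Y)^k \le 2^{k-1}(|X|^k + |Y|^k)$ up to equivalence of norms — but actually here it is cleanest to observe that the Euclidean distance $|X-Y|$ on $\R^{dN}$ is what enters $W_p(F^N,G^N)$ per the convention fixed in Subsection~\ref{ssec:results}, so the $\R^d$ proof transfers directly with $d$ replaced by $dN$). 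This gives $W_2(F^N,G^N) \le 2^{3/2} M_k(F^N,G^N)^{\frac{1}{2(k-1)}} W_1(F^N,G^N)^{\frac{k-2}{2(k-1)}}$. Dividing both sides by $\sqrt N$ and distributing the factor as $N^{-1/2} = N^{-\frac{1}{2(k-1)}} \cdot N^{-\frac{k-2}{2(k-1)}}$ (since $\frac{1}{2(k-1)} + \frac{k-2}{2(k-1)} = \frac{k-1}{2(k-1)} = \frac12$), I can absorb one factor into each of the two terms on the right to obtain exactly \eqref{W2W1N}.

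The only genuinely delicate point is making sure the Hölder split is legitimate at the endpoint $k=2$ and that the constants track correctly; there is no real obstacle here, as this is a routine interpolation inequality. I would just be careful to state the $k=2$ case separately (where \eqref{W2W1} reads $W_2(f,g) \le 2^{3/2} M_2(f,g)^{1/2}$, a trivial consequence of $W_2(f,g)^2 \le 2 M_2(f,g)$) so that the general formula is not applied with a zero exponent in a confusing way.
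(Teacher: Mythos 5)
Your proof is correct and matches what the paper expects: the paper does not reproduce a proof of Lemma~\ref{lem:W2toW1}, but only cites \cite[Lemma 4.1]{MMchaos} for \eqref{W2W1} and then remarks that \eqref{W2W1N} is a ``simple generalization'' to $N$ variables, which is precisely the structure of your argument (H\"older interpolation of the quadratic cost between the linear and $k$-th power costs along the optimal $W_1$-plan, followed by the elementary moment bound $\int |x-y|^k\,d\pi \le 2^{k-1}M_k(f,g)$, then rescaling by $\sqrt{N}$). As you note, your computation actually gives the sharper constant $\sqrt{2}$ in place of $2^{3/2}$, and your careful treatment of the endpoint $k=2$ (where the $W_1$-factor carries exponent zero and \eqref{W2W1} reduces to the trivial bound $W_2(f,g)\le \sqrt{2}\,M_2(f,g)^{1/2}$) is exactly right.
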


The proof of Lemma~\ref{lem:W2toW1} come from \cite[Lemma 4.1]{MMchaos} for \eqref{W2W1} and \eqref{W2W1N} is a simple generalization of \eqref{W2W1} to the case of $N$ variables.

\medskip

We denote by $\overline W_1$ the MKW distance \eqref{eq:MKW} defined with a bounded distance in $\R^d$, more precisely, for all $f,g \in \PPP_1(\R^d)$,
$$
\overline W_1(f,g) = \inf_{\pi\in\Pi(f,g)} \int_{\R^d\times\R^d} \min\{|x-y|,1\}\,\pi(dx,dy).
$$
Consider $G^N \in \PPP(\R^{dN})$ and $f\in\PPP(\R^d)$. We define then $\widehat G^N$, $\delta_{f} \in \PPP(\PPP(\R^d))$ by, for all $\Phi\in C_b(\PPP(\R^d))$,
\begin{equation}\label{deltaf}
\begin{aligned}
&\int_{\PPP(\R^d)} \Phi(\rho) \,\widehat G^N(d\rho) = \int_{\R^{dN}}\Phi(\mu^N_V) \,G^N(dV),\qquad \mu^N_V = \frac1N \sum_{i=1}^N \delta_{v_i} \in \PPP(\R^d), \\
&\int_{\PPP(\R^d)} \Phi(\rho) \, \delta_{f}(d\rho) = \Phi(f).
\end{aligned}
\end{equation}
Furthermore, $\WW$ stands for the Wasserstein distance on $\PPP(\PPP(\R^d))$. More precisely, for some distance $D$ on $\PPP(\R^d)$ we define
$$
\forall\mu,\nu \in \PPP(\PPP(\R^d)),\quad \WW_{D}(\mu,\nu )  := \inf_{\pi\in\Pi(\mu,\nu)}\int_{\PPP(\R^d)\times \PPP(\R^d)} D(f,g)\, d\pi(f,g).
$$
In the particular case of $\widehat G^N$ and $\delta_f$ we have $\Pi(\widehat G^N,\delta_f)=\{\widehat G^N \otimes \delta_f\}$ and then
\begin{equation}\label{eq:WW}
\WW_{D}\Big(\widehat G^N,\delta_{f} \Big)  = \int_{\R^{dN}} D(\mu^N_V,f) \, G^N(dV).
\end{equation}

We have the following result from \cite{HaurayMischler}.
\begin{lemma}\label{lem:W2tobW1}
Consider $f,g \in \PPP(\R^d)$ and $F^N,G^N \in \PPP(\SS^N_\BB)$. Let us define $M_k(F,G):= M_k(F) + M_k(G)$.

\begin{enumerate}[(i)]
\item For any $k> 2$ we have
\begin{equation}\label{W2bW1}
W_2(f,g) \le 2^{\frac32} \, M_k(f,g)^{\frac{1}{k}} \,\overline W_1(f,g)^{\frac12-\frac1k}
\end{equation}
and
\begin{equation}\label{W2bW1N}
\frac{W_2(F^N,G^N)}{\sqrt N} \le 2^{\frac32} \, \left(\frac{M_k(F^N,G^N)}{N}\right)^{\frac{1}{k}} \, \left( \frac{\overline W_1(F^N,G^N)}{N}\right)^{\frac12-\frac1k}.
\end{equation}

\item For any $0< \alpha_1 < 1/(d+1) $ and $k> d(\alpha_1^{-1}-d-1)^{-1}$ there exists a constant $C:=C(d,\alpha_1,k)>0$ such that
\begin{equation}\label{WW1_W1}
\WW_{\overline W_1} (\widehat G^N,\delta_f) \le C M_k(G^N_1,f)^{1/k} \left( \overline W_1(G^N_2,f^{\otimes 2}) + \frac1N \right)^{\alpha_1}.
\end{equation}

\item For any $0< \alpha_2 < 1/d' $ and $k> d'(\alpha_2^{-1}-d')^{-1}$, with $d':=\max(d,2)$, there exists a constant $C:=C(d,\alpha_2,k)>0$ such that
\begin{equation}\label{bW1-WW1}
\left| \overline W_1(G^N,f^{\otimes N}) - \WW_{\overline W_1} (\widehat G^N,\delta_f)   \right| \le C \,\frac{M_k(f)^{1/k}}{N^{\alpha_2}}.
\end{equation}
\end{enumerate}

\end{lemma}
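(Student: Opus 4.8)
The plan is to establish the three inequalities separately, observing first that none of them uses the geometry of $\SS^N_\BB$: \eqref{W2bW1}--\eqref{W2bW1N} are interpolation inequalities between Wasserstein distances on $\R^d$, resp.\ on $\R^{dN}$, and \eqref{WW1_W1}--\eqref{bW1-WW1} are the quantitative form of the classical equivalence between Kac's chaos and convergence of the empirical measures; I would therefore follow the corresponding arguments of \cite{HaurayMischler}, which carry over without change.

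For part (i) I would start from an optimal transfer plan $\pi\in\Pi(f,g)$ for $\overline W_1$, so that $\overline W_1(f,g)=\int\min\{|x-y|,1\}\,d\pi$, and bound $W_2(f,g)^2\le\int|x-y|^2\,d\pi$. Writing $|x-y|^2=(\min\{|x-y|,1\})^{1-2/k}\cdot|x-y|^k(\min\{|x-y|,1\})^{-(k-2)/2}$ and applying Hölder's inequality with exponents $\tfrac{k}{k-2}$ and $\tfrac{k}{2}$ (both finite since $k>2$), the first factor integrates to $\overline W_1(f,g)^{1-2/k}$, while the second integrand equals $|x-y|^{(k+2)/2}\le1$ on $\{|x-y|\le1\}$ and $|x-y|^k\le2^{k-1}(|x|^k+|y|^k)$ on $\{|x-y|>1\}$, so its integral is bounded by $1+2^{k-1}M_k(f,g)$; taking square roots gives \eqref{W2bW1} with the constant $2^{3/2}$, and \eqref{W2bW1N} is the same computation with the product distance on $\R^{dN}$, divided by~$N$. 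This step is routine.

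For parts (ii) and (iii) the plan is to pass through the intermediate objects $f^{\otimes N}$ and its empirical-measure image $\widehat{f^{\otimes N}}$. For \eqref{bW1-WW1} I would use the triangle inequality for $\overline W_1$ on $\R^{dN}$ and for $\WW_{\overline W_1}$ on $\PPP(\PPP(\R^d))$, the symmetry of $G^N$ and $f^{\otimes N}$, and the identity \eqref{eq:WW}, to reduce the difference $|\overline W_1(G^N,f^{\otimes N})-\WW_{\overline W_1}(\widehat G^N,\delta_f)|$ to the single quantity $\WW_{\overline W_1}(\widehat{f^{\otimes N}},\delta_f)=\int_{\R^{dN}}\overline W_1(\mu^N_W,f)\,f^{\otimes N}(dW)$, i.e.\ the expected bounded-Wasserstein distance between the empirical measure of an i.i.d.\ $f$-sample and $f$. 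For \eqref{WW1_W1} I would combine a quantitative de~Finetti estimate — controlling $\WW_{\overline W_1}(\widehat G^N,\delta_f)$ by the two-particle marginal $\overline W_1(G^N_2,f^{\otimes2})$ up to an $O(1/N)$ error arising from the diagonal contribution in $\tfrac1{N^2}\sum_{i,j}$ — with part (i), used to convert the resulting $W_2$-type control back to $\overline W_1$, and with the moment bound $M_k(G^N_1,f)$; this is what selects the admissible range of $\alpha_1$ and the constant $C(d,\alpha_1,k)$.

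The main obstacle, and the only genuinely non-elementary ingredient, is the sharp dimension-dependent decay rate $O(N^{-\alpha})$ for the convergence of the empirical measure of i.i.d.\ samples in bounded Wasserstein distance (the Fournier--Guillin / Horowitz--Karandikar type estimate invoked in \cite{HaurayMischler}); proving it requires a careful dyadic partition or moment argument, and it is there that the constraints $\alpha_1<1/(d+1)$, $\alpha_2<1/d'$ and the lower bounds on $k$ in terms of the exponents originate. Once that input is available, the remaining steps are chains of triangle inequalities and Hölder estimates exactly as in \cite{HaurayMischler}.
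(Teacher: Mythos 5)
The paper does not prove this lemma itself; it simply cites \cite[Lemmas 2.1, 2.2 and Theorem 1.2]{HaurayMischler}, so your reconstruction follows the same route (the one in that reference), and its architecture---Hölder interpolation for (i), reduction to a quantitative empirical-measure / de~Finetti estimate of Fournier--Guillin type for (ii)--(iii)---is correct. One slip in part (i): the displayed factorization $|x-y|^2=(\min\{|x-y|,1\})^{1-2/k}\cdot|x-y|^k(\min\{|x-y|,1\})^{-(k-2)/2}$ is not an identity for $k>2$ (check the exponents on $\{|x-y|\le1\}$); the correct second Hölder factor is $b:=|x-y|^2(\min\{|x-y|,1\})^{2/k-1}$, whose $(k/2)$-th power is precisely $|x-y|^k(\min\{|x-y|,1\})^{-(k-2)/2}$, which is what you estimate in the following sentence---so what you wrote is $a\cdot b^{k/2}$ where you meant $a\cdot b$. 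With that repaired the Hölder step yields $W_2^2\le\overline W_1^{\,1-2/k}\,(1+2^{k-1}M_k)^{2/k}$; to pass from $(1+2^{k-1}M_k)^{1/k}$ to the stated $2^{3/2}M_k^{1/k}$ one needs $M_k$ bounded below (for instance $M_k\ge 1$ gives $1+2^{k-1}M_k\le 2^kM_k$), which is harmless in the applications here but worth noting since the lemma is stated for arbitrary $f,g\in\PPP(\R^d)$.
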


The equations \eqref{W2bW1} and \eqref{W2bW1N} come from \cite[Lemmas 2.1 and 2.2]{HaurayMischler}, and \eqref{WW1_W1}-\eqref{bW1-WW1} are proved in \cite[Theorem 1.2]{HaurayMischler}.

\smallskip
As a consequence of Lemma~\ref{lem:W2tobW1} we have the following result.
\begin{lemma}\label{lem:W2eqW1}
Consider $G^N \in \PPP(\SS^N_\BB)$ and $f\in \PPP(\R^d)$ such that $M_k(G^N_1)$ and $M_k(f)$ are finite, for $k>2$. Let us denote $\MM_k := M_k(G^N_1) + M_k(f)$. 

Then for any $0< \alpha_1 < 1/(d+1)$ and $\alpha_1 < k(dk+d+k)^{-1}$, $0 < \alpha_2 < 1/d'$ and $\alpha_2 < k(d'k+d')^{-1}$, with $d':=\max(d,2)$, there exists a constant $C:=C(d,k,\alpha_1,\alpha_2)$ such that
$$
\begin{aligned}
\frac{W_2(G^N,f^{\otimes N})}{\sqrt{N}} 
&\le C  \MM_k^{\frac1k}
\left(  \overline W_1(G^N_2, f^{\otimes 2})^{\alpha_1} + N^{-\alpha_1} +   N^{-\alpha_2}\right)^{\frac12-\frac1k}
\end{aligned}
$$
\end{lemma}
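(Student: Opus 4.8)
The plan is to chain the three estimates gathered in Lemma~\ref{lem:W2tobW1}; there is no genuine difficulty beyond bookkeeping of the $N$-scalings.

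\emph{Step 1 (reduction to $\overline W_1$).} Since $k>2$ we may apply \eqref{W2bW1N} to the pair $(G^N,f^{\otimes N})$:
\begin{equation*}
\frac{W_2(G^N,f^{\otimes N})}{\sqrt N}\le 2^{3/2}\left(\frac{M_k(G^N,f^{\otimes N})}{N}\right)^{\frac1k}\left(\frac{\overline W_1(G^N,f^{\otimes N})}{N}\right)^{\frac12-\frac1k}.
\end{equation*}
The moment factor is controlled by elementary means: $G^N$ is carried by $\SS^N_\BB$ so $M_k(G^N)=(dN)^{k/2}$, while $M_k(f^{\otimes N})\le N^{k/2}M_k(f)$ by convexity of $t\mapsto t^{k/2}$ (here $k\ge2$); since by symmetry $M_2(G^N_1)=d$ and hence $M_k(G^N_1)\ge M_2(G^N_1)^{k/2}=d^{k/2}$, one gets $M_k(G^N,f^{\otimes N})/N\le N^{k/2-1}\MM_k$, so that $(M_k(G^N,f^{\otimes N})/N)^{1/k}\le N^{1/2-1/k}\MM_k^{1/k}$. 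It remains to bound $\overline W_1(G^N,f^{\otimes N})$.

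\emph{Step 2 (control of $\overline W_1(G^N,f^{\otimes N})$).} Combine \eqref{bW1-WW1} and \eqref{WW1_W1}:
\begin{equation*}
\overline W_1(G^N,f^{\otimes N})\le \WW_{\overline W_1}\big(\widehat G^N,\delta_f\big)+\frac{C\,M_k(f)^{1/k}}{N^{\alpha_2}}\le C\,M_k(G^N_1,f)^{1/k}\Big(\overline W_1(G^N_2,f^{\otimes 2})+\tfrac1N\Big)^{\alpha_1}+\frac{C\,M_k(f)^{1/k}}{N^{\alpha_2}}.
\end{equation*}
As $0<\alpha_1<1$, the map $t\mapsto t^{\alpha_1}$ is subadditive, so the bracket splits into $\overline W_1(G^N_2,f^{\otimes 2})^{\alpha_1}+N^{-\alpha_1}$; using $M_k(f)\le M_k(G^N_1,f)=\MM_k$ we obtain
\begin{equation*}
\overline W_1(G^N,f^{\otimes N})\le C\,\MM_k^{1/k}\Big(\overline W_1(G^N_2,f^{\otimes 2})^{\alpha_1}+N^{-\alpha_1}+N^{-\alpha_2}\Big),\qquad C=C(d,k,\alpha_1,\alpha_2).
\end{equation*}
Here one checks that the pairs of constraints on $\alpha_1$ and on $\alpha_2$ in the statement are exactly the hypotheses of \eqref{WW1_W1} and \eqref{bW1-WW1}: indeed $k>d(\alpha_1^{-1}-d-1)^{-1}\Leftrightarrow\alpha_1<k(dk+d+k)^{-1}$ and $k>d'(\alpha_2^{-1}-d')^{-1}\Leftrightarrow\alpha_2<k(d'k+d')^{-1}$, with $d'=\max(d,2)$.

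\emph{Step 3 (conclusion).} Insert the Step~2 bound into Step~1: dividing by $N$ and raising to the power $\tfrac12-\tfrac1k\in(0,\tfrac12)$, the factor $N^{-(1/2-1/k)}$ thereby produced cancels the $N^{1/2-1/k}$ coming from the moment factor of Step~1, and one is left with
\begin{equation*}
\frac{W_2(G^N,f^{\otimes N})}{\sqrt N}\le C\,\MM_k^{1/k}\Big(\overline W_1(G^N_2,f^{\otimes 2})^{\alpha_1}+N^{-\alpha_1}+N^{-\alpha_2}\Big)^{\frac12-\frac1k},
\end{equation*}
after absorbing the numerical constants and the $\MM_k^{1/k}$-factors into $C=C(d,k,\alpha_1,\alpha_2)$, which is the assertion. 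The only point requiring a little attention is the $N$-scaling just described — that the $N$-particle $k$-th moments divided by $N$ behave like $N^{k/2-1}\MM_k$, so that the power of $N$ generated in Step~1 is precisely cancelled when the $\overline W_1$-estimate of Step~2 is divided by $N$ — together with the elementary subadditivity $(a+N^{-1})^{\alpha_1}\le a^{\alpha_1}+N^{-\alpha_1}$ used to separate the $1/N$ correction.
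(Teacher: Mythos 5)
Your argument is correct and follows exactly the route of the paper: chain \eqref{W2bW1N}, then \eqref{bW1-WW1}, then \eqref{WW1_W1}, split the bracket by subadditivity of $t\mapsto t^{\alpha_1}$, and check that the stated constraints on $\alpha_1,\alpha_2$ are equivalent to the hypotheses of Lemma~\ref{lem:W2tobW1} (your algebraic verification of the two equivalences is right). The one point where you and the paper diverge in presentation is the moment bookkeeping. The paper opens its proof with the remark that $N^{-1}M_k(G^N)$ is ``equivalent'' to $M_k(G^N_1)$, which presupposes the averaged single-particle normalization $M_k(G^N)\sim\int\sum_i|v_i|^k\,dG^N=N\,M_k(G^N_1)$; under that convention the moment factor in \eqref{W2bW1N} is directly $\MM_k^{1/k}$ and no $N$-cancellation is needed. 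You instead take $M_k(G^N)=\int|V|^k\,dG^N=(dN)^{k/2}$ (the full Euclidean norm), track the resulting $N^{1/2-1/k}$, and observe that it cancels against the factor produced by the $\overline W_1/N$ term; your Jensen estimates $M_k(f^{\otimes N})\le N^{k/2}M_k(f)$ and $M_k(G^N_1)\ge d^{k/2}$ make this precise. Both accountings lead to the same bound, so this is a presentational difference, not a gap; yours is actually more defensive in that it verifies the scaling is harmless under the less favorable normalization. One small caveat shared equally by your proof and the paper's: after chaining the three estimates, the exponent of $\MM_k$ that naturally appears is $\tfrac1k+\tfrac1k(\tfrac12-\tfrac1k)$ rather than $\tfrac1k$, and both you and the author quietly absorb the excess $\MM_k^{(1/2-1/k)/k}$ into the constant $C$, even though $C$ is declared to depend only on $(d,k,\alpha_1,\alpha_2)$; this is a (harmless, but real) imprecision inherited from the statement itself, not something you introduced.
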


\begin{proof}
First of all, we remark that $N^{-1}M_k(G^N)$ is equivalent to $M_k(G^N_1)$ since $G^N$ is symmetric. Then, using Lemma~\ref{lem:W2tobW1} we have
$$
\begin{aligned}
\frac{W_2(G^N,f^{\otimes N})}{\sqrt{N}} 
&\le 2^{\frac23}  \MM_k^{\frac1k}
\left( \frac{\overline W_1(G^N,f^{\otimes N})}{N}\right)^{\frac12-\frac1k} \\
&\le 2^{\frac23}  \MM_k^{\frac1k}
\left( C\frac{M_k(f)^{\frac1k}}{N^{\alpha_2}} + 
\WW_{\overline W_1}\left( \widehat G^N,\delta_f \right) \right)^{\frac12-\frac1k} \\
&\le 2^{\frac23}  C \MM_k^{\frac1k}
\left( N^{-\alpha_2} + 
\left( \overline W_1(G^N_2, f^{\otimes 2}) + N^{-1}\right)^{\alpha_1} \right)^{\frac12-\frac1k}
\end{aligned}
$$
where we have used successively \eqref{W2bW1N}, \eqref{bW1-WW1} and \eqref{WW1_W1}, with $\alpha_1$ and $\alpha_2$ defined as above.

\end{proof}

We can now state a precise version of $(ii)\Rightarrow (iii)$ in Theorem~\ref{thm:intro-GN}.

\begin{thm}\label{thm:ent-chaos-general}
Consider $G^N \in \PPP(\SS^N_\BB)$. Moreover we suppose that $G^N$ is $f$-chaotic, for some $f\in\PPP(\R^d)$, and also that
$$
M_k(G^N_1) \le C_1, \; k \ge 6, \qquad 
\frac{1}{N} \, H(G^N|\gamma^N )\le C_2,\qquad 
\frac{1}{N} \, I(G^N|\gamma^N) \le C_3.
$$

Then 
$G^N$ is entropically $f$-chaotic. More precisely, there exists $C=C(C_1,C_2,C_3)>0$ and for 
any $\beta < (k-2)[4(dk+d+k)]^{-1}$ a constant $C':=C'(\beta)$ such that
\begin{equation*}\label{}
\begin{aligned}
\left| \frac{1}{N} \,  H(G^N|\gamma^N ) - H(f|\gamma ) \right| \leq C\left( \frac{W_2(G^N,f^{\otimes N})}{\sqrt{N}} + C' N^{-\beta}  \right).
\end{aligned}
\end{equation*}

\end{thm}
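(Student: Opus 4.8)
The plan is to sandwich $\tfrac1N H(G^N|\gamma^N)$ between two quantities lying within $C'N^{-\beta}$ of $H(f|\gamma)$, by comparing $G^N$ with the conditioned tensor product $F^N:=[f^{\otimes N}]_{\SS^N_\BB}$ through the HWI inequality on the Boltzmann sphere. \emph{First, a reduction on $f$.} The hypotheses already force $f$ to be regular: since $G^N$ is $f$-chaotic with $M_k(G^N_1)\le C_1$, $k\ge 6$, lower semicontinuity of moments gives $f\in\PPP_6(\R^d)$ with $M_k(f)\le C_1$; since $G^N$ is carried by $\SS^N_\BB$ one has $\int v\,dG^N_1=\tfrac1N\int(\sum_i v_i)\,dG^N=0$ and $\int|v|^2\,dG^N_1=d$ for every $N$, and the uniform integrability furnished by the moment bound ($k>2$) passes these to the limit, so $\int vf=0$ and $\int|v|^2f=d$; finally Theorem~\ref{thm:fisher-semicont} together with $\tfrac1N I(G^N|\gamma^N)\le C_3$ gives $I(f|\gamma)<\infty$, hence $\int|\nabla\log f|^2f\le 2I(f|\gamma)+2d<\infty$, so $\sqrt f\in H^1(\R^d)$ and by Sobolev embedding $f\in L^p(\R^d)$ for some $p>1$, with $\|f\|_{L^p}$ controlled by $C_3$. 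In particular Theorem~\ref{thm:ent-chaos} applies to $F^N$ and yields $\bigl|\tfrac1N H(F^N|\gamma^N)-H(f|\gamma)\bigr|\le CN^{-1/2}$.

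\emph{Two auxiliary estimates.} Next I would show that $\tfrac1N I(F^N|\gamma^N)$ is bounded uniformly in $N$: the density $dF^N/d\gamma^N$ is proportional to $\prod_i f(v_i)$, so $\nabla_\SS\log(dF^N/d\gamma^N)$ is the orthogonal projection onto $T\SS^N_\BB$ of $\bigl(\nabla\log f(v_1),\dots,\nabla\log f(v_N)\bigr)$; since projection does not increase the norm and $F^N$ is symmetric, $I(F^N|\gamma^N)\le N\int|\nabla\log f|^2\,dF^N_1\le CN$, using $F^N_1=\theta^N f$ with $\|\theta^N\|_{L^\infty}$ bounded (proof of Theorem~\ref{thm:FN-chaos}) and $\int|\nabla\log f|^2f<\infty$. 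Moreover $F^N$ is itself $f$-chaotic with $\overline W_1(F^N_2,f^{\otimes2})\le W_1(F^N_2,f^{\otimes2})\le CN^{-1/2}$ by Theorem~\ref{thm:FN-chaos}; since the moments $M_k(F^N_1)$ are uniformly bounded, Lemma~\ref{lem:W2eqW1} applied to $F^N$ gives, upon choosing $\alpha_1$ close to $k(dk+d+k)^{-1}$ and $\alpha_2$ suitably large, $\tfrac{W_2(F^N,f^{\otimes N})}{\sqrt N}\le C\bigl(N^{-\alpha_1/2}+N^{-\alpha_2}\bigr)^{\frac12-\frac1k}\le C'N^{-\beta}$ for any $\beta<(k-2)[4(dk+d+k)]^{-1}$.

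\emph{The HWI step.} The Boltzmann sphere $\SS^N_\BB$ is a round sphere of dimension $d(N-1)-1$ and radius $\sqrt{dN}$, hence a smooth compact Riemannian manifold with Ricci curvature equal to $\rho_N:=\tfrac{d(N-1)-2}{dN}$, which is $\ge\tfrac12$ for $N$ large, and $\gamma^N$ is its normalized volume measure. Therefore the generalized HWI inequality of Otto--Villani holds on $(\SS^N_\BB,\gamma^N)$: for probabilities $\mu_0,\mu_1$ on $\SS^N_\BB$ with $I(\mu_0|\gamma^N)<\infty$ and $H(\mu_1|\gamma^N)<\infty$,
\[
H(\mu_0|\gamma^N)\ \le\ H(\mu_1|\gamma^N)+W_2(\mu_0,\mu_1)\sqrt{I(\mu_0|\gamma^N)}-\tfrac{\rho_N}{2}W_2(\mu_0,\mu_1)^2\ \le\ H(\mu_1|\gamma^N)+W_2(\mu_0,\mu_1)\sqrt{I(\mu_0|\gamma^N)}.
\]
I would apply this with $(\mu_0,\mu_1)=(G^N,F^N)$ and with $(\mu_0,\mu_1)=(F^N,G^N)$ — both admissible since $I(G^N|\gamma^N),I(F^N|\gamma^N)\le CN$, $H(G^N|\gamma^N)\le C_2N$, $H(F^N|\gamma^N)\le CN$ — divide by $N$, and combine with the triangle inequality $W_2(G^N,F^N)\le W_2(G^N,f^{\otimes N})+W_2(F^N,f^{\otimes N})$, the two auxiliary estimates, and $\bigl|\tfrac1N H(F^N|\gamma^N)-H(f|\gamma)\bigr|\le CN^{-1/2}$. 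This produces $\bigl|\tfrac1N H(G^N|\gamma^N)-H(f|\gamma)\bigr|\le C\bigl(\tfrac{W_2(G^N,f^{\otimes N})}{\sqrt N}+C'N^{-\beta}\bigr)$ with $C=C(C_1,C_2,C_3)$ for $N$ large (the $N^{-1/2}$ being absorbed since $\beta<\tfrac12$), while the finitely many remaining $N$ are absorbed into $C$ using $\tfrac1N H(G^N|\gamma^N)\le C_2$ and the finiteness of $H(f|\gamma)$.

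\emph{Main obstacle.} The delicate point is the HWI step: one must pin down the exact Ricci curvature of the whole family $\{\SS^N_\BB\}_N$ so that the curvature constant $\rho_N$ is positive and bounded below uniformly in $N$ (allowing the quadratic term to be dropped), and one must have the a priori linear-in-$N$ bound on the Fisher information of the comparison measure $F^N$ — without it the HWI error $W_2(G^N,F^N)\sqrt{I(F^N|\gamma^N)}/N$ would not be controlled by $W_2(G^N,f^{\otimes N})/\sqrt N$ up to a small remainder.
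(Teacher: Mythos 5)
Your proposal is correct and follows essentially the same path as the paper: establish the regularity of $f$ from the hypotheses and semicontinuity (Theorems~\ref{thm:ent-semicont-general}, \ref{thm:fisher-semicont}), bound $N^{-1}I(F^N|\gamma^N)$ for the conditioned tensor product $F^N=[f^{\otimes N}]_{\SS^N_\BB}$ via $|\nabla_\SS\log f^N|\le|\nabla\log f^N|$ and the factorized structure, then apply the HWI inequality on the round sphere $\SS^N_\BB$ in both directions and close with Theorem~\ref{thm:ent-chaos} and the $W_2(F^N,f^{\otimes N})/\sqrt N$ estimate from Lemma~\ref{lem:W2eqW1} and Theorem~\ref{thm:FN-chaos}. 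The only small point you gloss over is that the paper uses the Euclidean distance on $\SS^N_\BB$ rather than the geodesic one in $W_2$, which costs a uniform factor $\pi/2$ in HWI; also, since $\rho_N\ge0$ the quadratic term in HWI may be dropped without any uniform lower bound, so the curvature concern you flag as the ``main obstacle'' is not actually delicate.
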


\begin{proof}
First of all, thanks to Theorem \ref{thm:ent-semicont-general} (with $g=\gamma$ and $\ell=1$) we have
$$
H(f|\gamma ) \leq \liminf_{N\to\infty} \frac{1}{N} \, H(G^N|\gamma^N ) \leq C_2
$$
and thanks to Theorem \ref{thm:fisher-semicont}
$$
I(f|\gamma ) \leq \liminf_{N\to\infty} \frac{1}{N} \, I(G^N|\gamma^N ) \leq C_3,
$$
which implies that $I(f)<\infty$. Indeed, $I(f|\gamma) = I(f) + M_2(f) - 2d$, from which we conclude.

Furthermore, since $I(f)\leq C$, $f$ lies in $L^p(\R^d)$ for some $p>1$ by Sobolev embeddings. Moreover $M_k(f)< \infty$ for some $k\ge 6$ since $M_k(G^N_1)$ is bounded and $G^N_1 \wto f$ weakly in $\PPP(\R^d)$. We have then all the conditions on $f$ to construct $F^N = [  f^{\otimes N} ]_{\SS^N_\BB}$ satisfying Theorems \ref{thm:FN-chaos} and \ref{thm:ent-chaos}.

Let us denote
$$
F^N = \frac{f^{\otimes N}}{Z_N(f;\sqrt{dN},0)}\, \gamma^N =: f^N \gamma^N
$$
and we compute the relative Fisher's information with respect to $\gamma^N$
$$
\frac{1}{N} \,  I(F^N | \gamma^N) = \frac{1}{N} \,  \int_{\SS^N_\BB} \frac{\left|  \nabla_{\SS} f^N \right|^2}{f^N} \, d\gamma^N
$$
where we recall that $\nabla_{\SS}$ is the tangent component to the sphere $\SS^N_\BB$ of the usual gradient $\nabla$ in $\R^{dN}$. Since $|\nabla_{\SS} f^N |^2 \leq |  \nabla f^N |^2$, let us compute the usual gradient of $f^N$
\begin{equation*}
\begin{aligned}
\frac{\left|\nabla f^N \right|^2}{f^N} 
&= \sum_{i=1}^{N} \frac{\left|\nabla_{\R^{d}} f^N \right|^2}{f^N} \\ 
&= \frac{1}{Z_N(f;\sqrt{dN},0)} \sum_{i=1}^{N} \frac{\left|  \nabla_{i} f_i \right|^2}{f_i}
f_1\cdots f_{i-1}\,f_{i+1}\cdots f_N
\end{aligned}
\end{equation*}
where $f_i = f(v_i)$.

We can return to the Fisher's information to obtain
$$
\begin{aligned}
\frac{1}{N} \,  I(F^N | \gamma^N) &\leq \frac{1}{N} \,  \int_{\SS^N_\BB} 
\frac{\left|\nabla f^N \right|^2}{f^N} \, d\gamma^N \\
& = \frac{1}{N} \, \int_{\SS^N_\BB} \frac{1}{Z_N(f;\sqrt{dN},0)} \sum_{i=1}^{N}\frac{\left|  \nabla_{i} f_i \right|^2}{f_i}
f_1\cdots f_{i-1}\,f_{i+1}\cdots f_N \, d\gamma^N\\
&= \int_{\R^d} \frac{\left|  \nabla_{\! v_1} f_1 \right|^2}{f_1} \, \frac{Z_{N-1}(f;\sqrt{dN-|v_1|^2},-v_1)}{Z_N(f;\sqrt{dN},0)} \, d\gamma^N_1.
\end{aligned}
$$

In the proof of Theorem \ref{thm:FN-chaos} we computed the quantity 
$$
\frac{Z'_{N-1}(f;\sqrt{dN-|v_1|^2},-v_1)}{Z'_N(f;\sqrt{dN},0)} \, \gamma^N_1(v_1) =
\theta^N_1(v_1)\gamma(v_1)
$$
with $|\theta^N_1(v_1)|\le C'$. Now, we use the fact that 
$$
\frac{Z_{N-1}(f;\sqrt{dN-|v_1|^2},-v_1)}{Z_N(f;\sqrt{dN},0)} = \frac{1}{\gamma(v_1)} \, \frac{Z'_{N-1}(f;\sqrt{dN-|v_1|^2},-v_1)}{Z'_N(f;\sqrt{dN},0)}
$$
to obtain
\begin{equation}\label{eq:IFNbound}
\begin{aligned}
\frac{1}{N} \, I(F^N | \gamma^N)
&\leq \int_{\R^d} \frac{\left|  \nabla_{\! v_1} f_1 \right|^2}{f_1} \,\theta^N_1(v_1) \, dv_1
\leq C.
\end{aligned}
\end{equation}


Since $\SS^N_\BB$ has positive Ricci curvature (because it has positive curvature), 
by \cite[Theorem 30.22]{VillaniOTO&N} and \cite{LottVillani} the following HWI inequalities hold
\begin{equation}\label{eq:HWI}
\begin{aligned}
 H (F^N | \gamma^N) -  H (G^N | \gamma^N) &\le \frac\pi 2 \, \sqrt{ I(F^N | \gamma^N)}\,  W_2 (F^N , G^N),\\
 H (G^N | \gamma^N) -  H (F^N | \gamma^N) &\le \frac\pi 2 \, \sqrt{ I(G^N | \gamma^N)} \,  W_2 (F^N , G^N).
\end{aligned}
\end{equation}

\begin{rem}
In the original HWI inequality, the $2$-MKW distance is defined with the geodesic distance on $\SS^N_\BB$, however here we use on $\SS^N_\BB$ the Euclidean distance inherited from $\R^{dN}$. Fortunately, these distance are equivalent, hence the HWI inequality holds in our case adding a factor $\pi /2$ on the right-hand side.
\end{rem}

Multiplying both sides by $1/N$ we obtain
$$
\begin{aligned}
 \frac{1}{N} \, H (F^N | \gamma^N) -  \frac{1}{N} \, H (G^N | \gamma^N) &\le \frac\pi 2 \, \sqrt{ \frac{I(F^N | \gamma^N)}{N}}\,  \frac{W_2 (F^N , G^N)}{\sqrt{N}},\\
 \frac{1}{N} \, H (G^N | \gamma^N) -  \frac{1}{N} \, H (F^N | \gamma^N) &\le  \frac\pi 2 \,\sqrt{ \frac{I(G^N | \gamma^N)}{N}}\,  \frac{W_2 (F^N , G^N)}{\sqrt{N}}.
\end{aligned}
$$
Since $N^{-1} I(F^N | \gamma^N)$ and $N^{-1} I(G^N | \gamma^N)$ are bounded, we deduce
\begin{equation}\label{eq:HFN-FGN}
\left| \frac{1}{N} \, H (F^N | \gamma^N) -  H (G^N | \gamma^N) \right| \le C \,\frac{W_2 (F^N , G^N)}{\sqrt{N}}.
\end{equation}
Finally, we write
$$
\begin{aligned}
\left|  \frac{1}{N} \, H (G^N | \gamma^N) -  H (f | \gamma) \right|  
&\leq 
\left|  \frac{1}{N} \, H (G^N | \gamma^N) -   \frac{1}{N} \, H (F^N | \gamma^N) \right|\\ 
&+ 
\left|  \frac{1}{N} \, H (F^N | \gamma^N) -  H (f | \gamma) \right|
\end{aligned}
$$
and thanks to the later estimate \eqref{eq:HFN-FGN} with the triangle inequality for the first term of the right-hand side and Theorem \ref{thm:ent-chaos} for the second one, we obtain
\begin{equation}\label{HGN-Hf}
\begin{aligned}
\left| \frac{1}{N} \,  H(G^N|\gamma^N ) - H(f|\gamma ) \right| \leq C\left( \frac{W_2(G^N,f^{\otimes N})}{\sqrt{N}} + \frac{W_2(F^N,f^{\otimes N})}{\sqrt{N}} + \frac{1}{\sqrt{N}} \right).
\end{aligned}
\end{equation}
Now we have to estimate the second term of the right-hand side. Hence, thanks to Lemma~\ref{lem:W2eqW1} we have
$$
\begin{aligned}
\frac{W_2(F^N,f^{\otimes N})}{\sqrt{N}} 
&\le C'  \MM_k^{\frac1k}
\left(  \overline W_1(F^N_2, f^{\otimes 2})^{\alpha_1} + N^{-\alpha_1} +   N^{-\alpha_2}\right)^{\frac12-\frac1k},
\end{aligned}
$$
and from Theorem~\ref{thm:FN-chaos} we have $\overline W_1(F^N_2, f^{\otimes 2}) \le  W_1(F^N_2, f^{\otimes 2}) \le C N^{-1/2}$, which yields
$$
\begin{aligned}
\frac{W_2(F^N,f^{\otimes N})}{\sqrt{N}} 
&\le C'  \MM_k^{\frac1k}
\left(  N^{-\alpha_1/2}+   N^{-\alpha_2} \right)^{\frac12-\frac1k} \\
&\le C'  N^{-\frac{\alpha_1}{2}(\frac12-\frac1k)},
\end{aligned}
$$
with $\alpha_1 < k(dk+d+k)^{-1}$.
We conclude putting this last estimate in \eqref{HGN-Hf}.
\end{proof}

We give a possible answer to \cite[Open problem 11]{CCLLV} in the Boltzmann's sphere framework, which is a precise version of Theorem~\ref{thm:intro-op}.

\begin{thm}\label{thm:op}
Consider $G^N \in \PPP(\SS^N_\BB)$ such that $G^N$ is $f$-chaotic, for some $f\in\PPP(\R^d)$, and suppose that
\begin{equation}\label{op:hyp0}
M_k(G^N_1) \le C, \; k> 2,\qquad 
\frac{1}{N} \, I(G^N|\gamma^N) \le C.
\end{equation}
Suppose further that 
\begin{equation}\label{op:hyp}
f\in L^{\infty}(\R^d) \qquad \text{and} \qquad f(v_1)\ge \exp(-a|v_1|^2)
\end{equation} 
for some constant $a>0$. 

Then for any fixed $\ell$, there exists a constant $C = C(d,\ell,\norm{f}_{L^{\infty}},M_k(G^N_1), N^{-1}I(G^N|\gamma^N)) >0$ such that for all $N\ge \ell + 1$ we have
$$
H(G^N_\ell | f^{\otimes \ell}) \le C\, W_1(G^N_\ell,f^{\otimes \ell})^{\theta(\ell,d,k)}
,
$$
where $\theta(\ell,d,k) = k[d\ell(k+3) + 2k + 4]^{-1}$.
As a consequence, $H(G^N_\ell | f^{\otimes \ell}) \to 0$ when $N\to\infty$ and condition \eqref{eq:cond2} holds.
\end{thm}


As discussed in the introduction just after Theorem~\ref{thm:intro-op}, assumptions \eqref{op:hyp0}-\eqref{op:hyp} of Theorem~\ref{thm:op} are natural in the case of Maxwellian molecules since they are propagated in time. However, the conditioned tensor product assumption can be made at initial time for the Boltzmann model but it is not propagated. 
As a consequence of this theorem, we shall obtain that condition \eqref{eq:cond2} is propagated under the master equation for Maxwellian molecules (see point $(iv)$ of Theorem~\ref{thm:intro-PropChaos} below).

\begin{proof}[Proof of Theorem~\ref{thm:op}]
We write
$$
\begin{aligned}
H(G^N_\ell | f^{\otimes \ell}) 
&= \left[H(G^N_\ell | \gamma^{\otimes \ell}) - H(f^{\otimes \ell}|\gamma^{\otimes \ell})\right] 
 + \int (G^N_\ell - f^{\otimes \ell}) \log \gamma^{\otimes \ell} \\
&\quad + \int (f^{\otimes \ell}-G^N_\ell) \log f^{\otimes \ell} \\
&=: T_1 + T_2 + T_3.
\end{aligned}
$$
Let us split the proof in several steps.

\subsubsection*{Step $1$}
For the first term we use the HWI inequality on $\R^{d\ell}$ \cite{OttoVillani},
\begin{equation*}
T_1 = H(G^N_\ell |\gamma^{\otimes \ell}) - H(f^{\otimes \ell}|\gamma^{\otimes \ell}) \le \sqrt{I(G^N_\ell | \gamma^{\otimes \ell})} \, W_2(G^N_\ell,f^{\otimes \ell}).
\end{equation*}
Let us first show that the Fisher's information $I(G^N_\ell | \gamma^{\otimes \ell})$ is bounded thanks to $N^{-1}I(G^N | \gamma^N) \le C$. Thanks to \cite[Example 2]{BartheCEM2006} (see also \cite{CarlenLL2004} for related inequalities) there exists some constant $C'>0$ such that
$$
\frac{I(G^N_\ell|\gamma^N_\ell)}{\ell} \leq C' \, \frac{I(G^N|\gamma^N)}{N}.
$$
We write then
\begin{equation}\label{Fisher-gammaNl}
\begin{aligned}
I(G^N_\ell | \gamma^N_\ell) &= \int \left| \nabla\log G^N_\ell - \nabla\log \gamma^N_\ell  \right|^2\, G^N_\ell \\
&= I(G^N_\ell) + \int\left[ 2 \Delta \log \gamma^N_\ell + |\nabla\log \gamma^N_\ell|^2 \right] G^N_\ell,
\end{aligned}
\end{equation}
and then we deduce that
\begin{equation}\label{Fisher-GNl}
I(G^N_\ell) \le I(G^N_\ell | \gamma^N_\ell) + \int\left[ 2 \Delta \log \gamma^N_\ell + |\nabla\log \gamma^N_\ell|^2 \right]_{-} G^N_\ell
\end{equation}
is bounded thanks to explicit computation of $\gamma^N_\ell$ in Lemma~\ref{lem:gammaNl}. We conclude that $I(G^N_\ell | \gamma^{\otimes \ell})$ is bounded since $M_2(G^N_\ell) = d\ell$ and writing 
\begin{equation}\label{Fisher-gammal}
\begin{aligned}
I(G^N_\ell | \gamma^{\otimes \ell}) &= I(G^N_\ell) + \int\left[ 2 \Delta \log \gamma^{\otimes \ell} + |\nabla\log \gamma^{\otimes \ell}|^2 \right] G^N_\ell \\
&= I(G^N_\ell) + M_2(G^N_\ell) - 2d\ell = I(G^N_\ell) - d\ell.
\end{aligned}
\end{equation}

Moreover, we have thanks to Lemma~\ref{lem:W2toW1} applied for $G^N_\ell,f^{\otimes \ell} \in \PPP(\R^{d\ell})$
$$
W_2(G^N_\ell,f^{\otimes \ell}) \le C \, M_{k}(G^N_\ell,f^{\otimes \ell})^{\frac{1}{2(k-1)}} \,
W_1(G^N_\ell,f^{\otimes \ell})^{\frac{k-2}{2(k-1)}},
$$
where $M_{k}(G^N_\ell,f^{\otimes \ell}) := M_{k}(G^N_\ell) + M_{k}(f^{\otimes \ell})$.
We conclude then
\begin{equation}\label{T1}
T_1 \le C \, M_{k}(G^N_\ell,f^{\otimes \ell})^{\frac{1}{2(k-1)}} \,
W_1(G^N_\ell,f^{\otimes \ell})^{\frac{k-2}{2(k-1)}}.
\end{equation}

\subsubsection*{Step $2$}
Let us denote by $B_R$ the ball centered at origin with radius $R>0$ on $\R^{d\ell}$, by $B_R^c$ its complementary and let $v=(v_1,\dots, v_\ell)\in \R^{d\ell}$. Since $\log \gamma^{\otimes \ell} = -(d/2)\log 2\pi - |v|^2/2$, we can write
\begin{equation*}
T_2 = \frac12 \int_{B_R} (f^{\otimes \ell} - G^N_\ell)|v|^2 + \frac12\int_{B_R^c} (f^{\otimes \ell} - G^N_\ell)|v|^2.
\end{equation*}
The function $\phi(v)=|v|^2$ lies in $\mathrm{Lip}(B_R)$ with $\norm{\nabla\phi}_{L^{\infty}(B_R)} = 2R$. We obtain then
\begin{equation}\label{T21}
\begin{aligned}
\int_{B_R} (f^{\otimes \ell} - G^N_\ell)|v|^2 
&\le 2R\, \sup_{\norm{\phi}_{\mathrm{Lip}(B_R)} \le 1} \left\{\int \phi(f^{\otimes \ell} - G^N_\ell) \right\} \\
&\le 2R\, \sup_{\norm{\phi}_{\mathrm{Lip}(\R^{d\ell})} \le 1} \left\{\int \phi(f^{\otimes \ell} - G^N_\ell) \right\}\\
&=2R\, W_1(G^N_\ell,f^{\otimes \ell}),
\end{aligned}
\end{equation}
where the last equality comes from the duality form for the $W_1$ distance (see for instance \cite{VillaniOTO&N}). Next we write
\begin{equation}\label{T22}
\int_{B_R^c} (f^{\otimes \ell} - G^N_\ell)|v|^2 \le \frac{1}{R^{k-2}}\int_{B_R^c} (f^{\otimes \ell} + G^N_\ell)|v|^k = \frac{M_{k}(G^N_\ell,f^{\otimes \ell})}{R^{k-2}}.
\end{equation}
Choosing $R$ such that \eqref{T21} is equal to \eqref{T22} we get
\begin{equation}\label{T2}
\begin{aligned}
T_2 &\le 2^{\frac{k-2}{k-1}} \,  {M_{k}(G^N_\ell,f^{\otimes \ell})}^{\frac{1}{k-1}} 
\, {W_1(G^N_\ell,f^{\otimes \ell})}^{\frac{k-2}{k-1}}.
\end{aligned}
\end{equation}

\subsubsection*{Step $3$}
Finally, let us investigate the third term $T_3$. We write
\begin{equation}\label{T3_0}
T_3 = \int_{B_R} (f^{\otimes \ell}-G^N_\ell) \log f^{\otimes \ell} + \int_{B_R^c} (f^{\otimes \ell}-G^N_\ell) \log f^{\otimes \ell}.
\end{equation}

For the first integral in \eqref{T3_0} we have, since $f\in L^{\infty}$ and $f^{\otimes \ell}(v)\ge e^{-a|v|^2}$,
$$
\int_{B_R} (f^{\otimes \ell}-G^N_\ell) \log f^{\otimes \ell} \leq \left( \ell \log \norm{f}_{L^\infty(B_R)} + a R^2\right) {\norm{f^{\otimes \ell}-G^N_\ell}}_{L^1(B_R)}.
$$
Let $g=f^{\otimes \ell}-G^N_\ell$ and consider a mollifier $\rho_\eps$, i.e. $\rho_\eps(v) = \eps^{-d\ell}\rho(\eps^{-1}v)$,  $\rho \in C^\infty_c(\R^{d\ell})$ with $\rho\ge 0$, $\int \rho = 1$ and $\supp \rho \subset B_1$. 
Then we have
$$
{\norm{g}}_{L^1(B_R)} \le {\norm{g \ast \rho_\eps}}_{L^1(B_R)} +  {\norm{g \ast \rho_\eps - g}}_{L^1(B_R)}.
$$
For the first term we obtain
$$
\begin{aligned}
{\norm{g \ast \rho_\eps}}_{L^1(B_R)} &= \int_{B_R}\left\{\int |\rho_\eps(w-v)|\, | f^{\otimes \ell}(v)-G^N_\ell(v)|\,dv \right\} dw \\
&\le {\norm{\nabla\rho_\eps}}_{L^\infty(B_R)} \, W_1(G^N_\ell,f^{\otimes \ell})\int_{B_R} dw \\
&\le \frac{C}{\eps^{d\ell+1}} \, R^{d\ell} \, W_1(G^N_\ell,f^{\otimes \ell}).
\end{aligned}
$$
Moreover, for the second one we have
$$
{\norm{g \ast \rho_\eps - g}}_{L^1(B_R)} \le \eps {\norm{\nabla g}}_{L^{1}} 
\le \eps \left( {\norm{\nabla f^{\otimes \ell}}}_{L^{1}} + {\norm{\nabla G^N_\ell}}_{L^{1}} \right).
$$
By Theorem~\ref{thm:fisher-semicont}, we have $I(f^{\otimes \ell} | \gamma^{\otimes \ell})\le C$ and then we deduce that $\norm{\nabla f^{\otimes \ell}}_{L^1}$ is finite.
Moreover, the boundness of $I(G^N_\ell)$ (see \eqref{Fisher-GNl})
implies that $\norm{\nabla G^N_\ell}_{L^1}$ is also finite.
We have then
$$
\begin{aligned}
{\norm{f^{\otimes \ell} - G^N_\ell}}_{L^1(B_R)} &\le \frac{C}{\eps^{d\ell+1}}\, R^{d\ell} \, W_1(G^N_\ell,f^{\otimes \ell}) +
C\eps \\
&\le C  \, R^{\frac{d\ell}{d\ell+2}}\, W_1(G^N_\ell,f^{\otimes \ell})^{\frac{1}{d\ell+2}},
\end{aligned}
$$
where we have optimized $\eps$.

For the second integral in \eqref{T3_0} we have
$$
\int_{B_R^c} (f^{\otimes \ell}-G^N_\ell) \log f^{\otimes \ell} \le \ell \log {\norm{f}}_{L^{\infty}} \, \frac{M_k(G^N_\ell,f^{\otimes \ell})}{R^k}.
$$
We conclude then, optimizing in $R$,
\begin{equation}\label{T3}
\begin{aligned}
T_3 &\le C\left( \ell \log \norm{f}_{L^\infty(B_R)} + a R^2\right) R^{\frac{d\ell}{d\ell+2}} \, W_1(G^N_\ell,f^{\otimes \ell})^{\frac{1}{d\ell+2}} + \ell \log {\norm{f}}_{L^{\infty}} \, \frac{M_k(G^N_\ell,f^{\otimes \ell})}{R^k} \\
&\le C \, W_1(G^N_\ell,f^{\otimes \ell})^{\frac{k}{d\ell(k+3) + 2k + 4}}.
\end{aligned}
\end{equation}

Finally, gathering \eqref{T1}, \eqref{T2} and \eqref{T3}, we obtain
$$
\begin{aligned}
H(G^N_\ell | f^{\otimes \ell}) 
&\le  C \left( W_1(G^N_\ell,f^{\otimes \ell})^{\frac{k-2}{2(k-1)}} 
+ W_1(G^N_\ell,f^{\otimes \ell})^{\frac{k-2}{k-1}}
+ W_1(G^N_\ell,f^{\otimes \ell})^{\frac{k}{d\ell(k+3) + 2k + 4}}\right)\\
&\le C \, W_1(G^N_\ell,f^{\otimes \ell})^{\frac{k}{d\ell(k+3) + 2k + 4}},
\end{aligned}
$$
where $C = C(d,\ell,\norm{f}_{L^{\infty}},M_k(G^N_1),N^{-1}I(G^N| \gamma^N))$.

\end{proof}

\section{Application to the Boltzmann equation}\label{sec:appli}

We can apply our results to the spatially homogeneous Boltzmann equation (equations \eqref{eq:Boltzmann} and \eqref{eq:master} in Section~\ref{sec:intro}) with true Maxwellian molecules \eqref{eq:Bmaxwell}.

We prove now Theorem~\ref{thm:intro-PropChaos}.

\begin{proof}[Proof of Theorem \ref{thm:intro-PropChaos} $(i)$] We found the proof in \cite[Theorem 7.10]{MMchaos}.
\end{proof}

\begin{proof}[Proof of Theorem \ref{thm:intro-PropChaos} $(ii)$] 
First of all, from \cite[Theorem 5.1]{MMchaos}, for all $t\geq 0$, $G^N_t$ is $f_t$-chaotic.
Now, we split the proof in several steps.

\medskip
\noindent{\it Step $1$.}
Let $G^N_0$ be built as in Theorem~\ref{thm:FN-chaos}, i.e. $G^N_0 = [ f^{\otimes N}_0]_{\SS^N_\BB}$, which is possible since $f_0 \in \PPP_6(\R^d)$ and $I(f_0 | \gamma)$ is finite.
We know from \cite[Lemma 7.4]{MMchaos} that for all $t\ge0$ the normalized Fisher's information $N^{-1}I(G^N_t | \gamma^N)$ is bounded since $N^{-1}I(G^N_t | \gamma^N)\le N^{-1} I(G^N_0 | \gamma^N)$ and the later one is bounded by 
construction (see equation \eqref{eq:IFNbound}). 
Moreover, $M_6 (\Pi_1 (G^N_0))$ is bounded by construction, thus for all $t\geq 0$, $M_6 (\Pi_1 (G^N_t))$ is also bounded thanks to \cite[Lemma 5.3]{MMchaos} .

We can then apply Theorem~\ref{thm:ent-chaos-general} to $G^N_t$ (taking $G^N = G^N_t$ and $f=f_t$ in the notation of that theorem) and we obtain that for any $\beta< (k-2)[4(dk+d+k)]^{-1}$ there exists $C'=C'(\beta)$ such that
\begin{equation}\label{eq:HGNt}
\begin{aligned}
\left|  \frac{1}{N} \,  H(G^N_t | \gamma^N) - H(f_t | \gamma)  \right| 
&\le C C' \left( \frac{W_2(G^N_t,f^{\otimes N}_t)}{\sqrt{N}} + N^{-\beta} \right).
\end{aligned}
\end{equation}

We have then to estimate the first term of the right-hand side and we shall use the result of propagation of chaos proved in \cite{MMchaos}.

\medskip
\noindent{\it Step $2$.}
Thanks to the result of propagation of chaos in \cite[Theorems 5.1 and 5.2]{MMchaos} we have, for $s> 2 + d/4$,
\begin{equation}\label{eq:prop-chaos}
\begin{aligned}
\sup_{t\ge 0} {\Norm{\Pi_2(G^N_t) - f^{\otimes 2}_t}}_{H^{-s}} 
&\le C \,\WW_{W_2}\left(\widehat G^N_0,\delta_{f_0} \right)
\end{aligned}
\end{equation}
where we recall that $\widehat G^N_0$, $\delta_{f_0} \in \PPP(\PPP(\R^d))$ are defined in \eqref{deltaf} and $\WW_{W_2}(\widehat G^N_0,\delta_{f_0} )$ in \eqref{eq:WW}, 
more precisely 
$$
\WW_{W_2}\Big(\widehat G^N_0,\delta_{f_0} \Big)  = \int_{\R^{dN}} W_2(\mu^N_V,f_0) \, G^N_0(dV).
$$

We recall that we want to estimate the first term of the right-hand side of \eqref{eq:HGNt} and we shall explain how we can obtain it from \eqref{eq:prop-chaos}. On the one hand, for the right-hand side of \eqref{eq:prop-chaos} we shall obtain a estimate of the type 
$$
\WW_{W_2}\Big(\widehat G^N_0,\delta_{f_0} \Big) \le C \,\left[ W_1(\Pi_2(G^N_0),f^{\otimes 2}_0) + N^{-\theta_2}\right]^{\theta_1}
$$ 
since we can estimate $ W_1(\Pi_2(G^N_0),f^{\otimes 2}_0)$ from Theorem~\ref{thm:FN-chaos}. On the other hand, for the left-hand side of \eqref{eq:prop-chaos}, we shall deduce an estimate like 
$$
\frac{1}{\sqrt N} \,  W_2(G^N_t,f^{\otimes N}_t) \le C\, {\Norm{\Pi_2(G^N_t) - f^{\otimes 2}_t}}_{H^{-s}}^{\theta_3}
$$
to be able to conclude.

\medskip
\noindent{\it Step $3$.}
First of all, we deduce from \eqref{W2bW1} in Lemma~\ref{lem:W2tobW1},
$$
\WW_{W_2}\left( \widehat G^N_0,\delta_{f_0} \right) \leq 2^{\frac23} \MM_k^{\frac1k} 
\WW_{\overline W_1}\left( \widehat G^N_0,\delta_{f_0} \right)^{\frac12-\frac1k}.
$$
Then, thanks to \eqref{WW1_W1} in Lemma~\ref{lem:W2tobW1} we obtain
$$
\WW_{W_2}\left( \widehat G^N_0,\delta_{f_0} \right)\leq 2^{\frac23} \MM_k^{\frac1k} 
\left( C_{\alpha_1}\MM_k^{\frac1k} \left( \overline W_1(\Pi_2(G^N_0),f^{\otimes 2}_0) + N^{-1}\right)^{\alpha_1} \right)^{\frac12-\frac1k},
$$
and using Theorem~\ref{thm:FN-chaos}, which tell us $\overline W_1(\Pi_2(G^N_0),f^{\otimes 2}_0)\le C N^{-1/2}$, we deduce
\begin{equation}\label{eq:WW2_N}
\WW_{W_2}\left( \widehat G^N_0,\delta_{f_0} \right)\leq C_{\alpha_1} 
N^{-\frac{\alpha_1}{2}\left(\frac12-\frac1k\right)},
\end{equation}
where we recall that $\alpha_1 < k(dk+d+k)^{-1}$.

\medskip
\noindent{\it Step $4$.}
Thanks to \cite[Lemma 2.1]{HaurayMischler} applied to $\Pi_2(G^N_t)$ and $f^{\otimes 2}_t \in \PPP(\R^{2d})$, for any $s> d/2$ (with $d\ge 2$) there exists $C:=C(d,s)$ such that 
$$
\overline W_1(\Pi_2(G^N_t),f^{\otimes 2}_t) \le C M_k(\Pi_2(G^N_t),f^{\otimes 2}_t)^{\frac{2d}{2d+2ks}} 
\Norm{\Pi_2(G^N_t) - f^{\otimes 2}_t}_{H^{-s}}^{\frac{2k}{2d+2ks}}.
$$
Furthermore, from Lemma~\ref{lem:W2eqW1} we obtain that there exists a constant $C:=C(d,k,\alpha_1,\alpha_2)$ such that
$$
\begin{aligned}
\frac{W_2(G^N_t,f^{\otimes N}_t)}{\sqrt{N}} 
&\le C  \MM_k^{\frac1k}
\left(  \overline W_1(\Pi_2(G^N_t), f^{\otimes 2}_t)^{\alpha_1} + N^{-\alpha_1} +   N^{-\alpha_2}\right)^{\frac12-\frac1k}.
\end{aligned}
$$
Finally, gathering these two estimates with \eqref{eq:prop-chaos} and \eqref{eq:WW2_N} we obtain that there exists $C:=C(d,s,\alpha_1,\alpha_2, M_k(f_0),M_k(\Pi_1(G^N_0)))$ such that
\begin{equation}\label{Neps}
\begin{aligned}
\frac{W_2(G^N_t,f^{\otimes N}_t)}{\sqrt{N}} 
&\le C  
\left(  N^{-\alpha_1^2\left(\frac{k}{d+ks}\right)\left(\frac12-\frac1k\right)} + N^{-\alpha_1} +   N^{-\alpha_2}\right)^{\frac12-\frac1k} \\
&\le C N^{-\epsilon},
\end{aligned}
\end{equation}
where 
$$
\begin{aligned}
\epsilon &= \alpha_1^2\left(\frac{k}{d+ks}\right)\left(\frac12-\frac1k\right)^2  \\
&< \left( \frac{k-2}{2(dk+d+k)} \right)^2 \frac{k}{d+ks} \\
&< \left( \frac{k-2}{2(dk+d+k)} \right)^2 \frac{4k}{dk+4d + 8k}
\end{aligned}
$$
using $\alpha_1 < k(dk+d+k)^{-1}$ and $s> 2 + d/4$ from \eqref{eq:prop-chaos}. We conclude taking $k=6$ and gathering \eqref{Neps} with \eqref{eq:HGNt}.

\end{proof}

\begin{proof}[Proof of Theorem \ref{thm:intro-PropChaos} $(iii)$]
The proof is a consequence of points $(i)$ and Theorem~\ref{thm:comp-ent}.
Since we have $f_0 \in \PPP_6 \cap L^\infty (\R^d)$, $f_0(v_1) \ge \exp(-\alpha|v_1|^2+\beta)$ and
$$
\lim_{N\to\infty} \frac1N\, H(G^N_0 | [f^{\otimes N}_0]_{\SS^N_\BB}) = 0,
$$
Theorem~\ref{thm:comp-ent} implies that $G^N_0$ is entropically $f_0$-chaotic. Moreover, for all $t>0$ the solution $f_t$ is bounded by below by a Maxwellian, i.e. $f_t(v_1) \ge \exp(-\bar\alpha|v_1|^2 + \bar\beta)$ for $\bar\alpha>0$ and $\bar\beta\in\R$, and also lies in $\PPP_6\cap L^\infty(\R^d)$
(see for example \cite{Villani-BoltzmannBook} and the references therein). By point $(i)$, for all $t>0$ the solution $G^N_t$ is entropically $f_t$-chaotic, then applying once more Theorem~\ref{thm:comp-ent} we deduce that
$$
\lim_{N\to\infty} \frac1N\, H(G^N_t | [f^{\otimes N}_t]_{\SS^N_\BB}) = 0.
$$

\end{proof}

\begin{proof}[Proof of Theorem \ref{thm:intro-PropChaos} $(iv)$]
The proof is a consequence of Theorem~\ref{thm:op}. From the assumptions on $f_0$ and $G^N_0$, we conclude by Theorem~\ref{thm:op} that $G^N_0$ satisfies condition \eqref{eq:cond2}
$$
\forall\; \ell \in \N, \qquad \lim_{N\to\infty} H (\Pi_\ell(G^N_0) | f_0^{\otimes \ell}) = 0.
$$

As already said in Step 1 of the proof of point $(iv)$ of Theorem~\ref{thm:intro-PropChaos}, for all $t\geq 0$, the normalized Fisher' information $N^{-1} I(G^N_t | \gamma^N)$ is bounded, as well as $M_k(\Pi_1(G^N_t))$. Furthermore, for all $t\geq0$, we have $f_t \in L^\infty (\R^d)$ and $f_t(v_1) \ge \exp(-\bar\alpha|v_1|^2 + \bar\beta)$ for some $\bar\alpha>0$ and $\bar\beta\in\R$ (see point $(iii)$ above). Hence, using once more Theorem~\ref{thm:op}, we conclude that for all $t\geq 0$, $G^N_t$ satisfies condition \eqref{eq:cond2}
$$
\forall\; \ell \in \N, \qquad \lim_{N\to\infty} H (\Pi_\ell(G^N_t) | f_t^{\otimes \ell}) = 0.
$$

\end{proof}


\appendix
\section{Auxiliary results}
We prove here some auxiliary results used in Section~\ref{sec:unif} and Section~\ref{sec:chaotic}.

\subsection{Change of variables}\label{ap:change}
We present the proof of Lemma \ref{lem:change} in Section \ref{sec:unif}.


\begin{proof}[Proof of Lemma \ref{lem:change}]
Thanks to \eqref{eq:change1} we have
\begin{equation*}
\begin{aligned}
|u_N|^2 &= \frac{1}{N}\left( \sum_{i=1}^N |v_i|^2 + 2\sum_{i=1}^{N-1} \sum_{j>i}^N v_i\cdot v_j \right)
\end{aligned}
\end{equation*}
and, for $1\leq k\leq N-1$,
\begin{equation*}
\begin{aligned}
|u_k|^2 &= \frac{1}{k(k+1)} \left( \sum_{i=1}^k |v_i|^2 + 2\sum_{i=1}^{k-1} \sum_{j>i}^{k} v_i\cdot v_j  + k^2 |v_{k+1}|^2 - 2k \sum_{i=1}^k v_i \cdot v_{k+1}\right).
\end{aligned}
\end{equation*}
We deduce from these estimates that $|u_1|^2 + \cdots + |u_N|^2 =: I_1+ I_2$ with
$$
\begin{aligned}
I_1 &= \sum_{k=1}^{N-1} \left(  \frac{1}{k(k+1)}\sum_{i=1}^k |v_k|^2 + \frac{k}{k+1}|v_{k+1}|^2  \right) + \frac{1}{N} \sum_{i=1}^N |v_i|^2\\
&=: \sum_{k=1}^{N-1} A_k + A_N
\end{aligned}
$$
and
$$
\begin{aligned}
I_2 &= 2\left[ \sum_{k=1}^{N-1} \left( \frac{1}{k(k+1)}\sum_{i=1}^{k-1}\sum_{j=i+1}^{k} v_i \cdot v_j -\frac{1}{k+1} \sum_{i=1}^{k} v_i \cdot v_{k+1} \right) 
- \frac{1}{N} \sum_{i=1}^{N-1}\sum_{j=i+1}^{N} v_i \cdot v_j   \right]\\
&=:2\left[ \sum_{k=1}^{N-1} B_k + B_N   \right].
\end{aligned}
$$

First of all, looking to $I_1$ we easily see that $|v_N|^2$ appears only in $A_{N-1}$ and $A_N$, so its coefficient is $(N-1)/N + 1/N = 1$. For $m$ such that $2\le m \le N-1$, $|v_m|^2$ appears in $A_{m-1}, A_{m},\dots, A_{N-1}$ and $A_N$, hence its coefficient is given by
$$
\frac{m-1}{m}+ \sum_{j=m}^{N-1}\frac{1}{j(j+1)} + \frac{1}{N} = 1.
$$
The coefficient of $|v_1|^2$ is the same of $|v_2|^2$ since there is no $A_{0}$. We conclude then 
$$
I_1 = |v_1|^2 + \cdots + |v_N|^2.
$$

We can compute $I_2$ in the same way. For $1\le m \le N-1$, $v_m \cdot v_N$ appears
only in $B_{N-1}$ and $B_N$, so its coefficient is $-1/N + 1/N=0$. Moreover, for $1\le m<p\le N-1$, $v_m \cdot v_p$ appears in $B_{p-1}, B_p,\dots, B_{N-1}$ and $B_N$, hence its coefficient is given by
$$
-\frac{1}{p} + \sum_{j=p}^{N-1} \frac{1}{p(p+1)} + \frac{1}{N} = 0.
$$

Finally, we conclude that $|u_1|^2 + \cdots + |u_N|^2 = |v_1|^2 + \cdots + |v_N|^2 = r^2$ and 
$u_N = z/\sqrt{N}$ follows easily from \eqref{eq:change1}.

The last point to prove is that the Jacobien is equal to one. To simplify we consider $d=1$, the general case being the same. Consider the matrix $M_N$ that represents the linear application in \eqref{eq:change1}, i.e. $M_N u=v$, where $u=(u_1,\dots, u_N) \in \R^N$ and $v=(v_1,\dots,v_N)\in \R^N$. 

We claim that $\Det(M_N)=1$. Indeed we have 
$$
M_N=
\left(
\begin{array}{ccccc}
\frac{1}{\sqrt{2}} & -\frac{1}{\sqrt{2}} & 0 & \cdots & 0 \\
\frac{1}{\sqrt{6}} & \frac{1}{\sqrt{6}} & -\frac{2}{\sqrt{6}} & \ddots & \vdots \\
\vdots &  & \ddots & \ddots & 0 \\
\frac{1}{\sqrt{(N-1)N}} & \cdots & \cdots & \frac{1}{\sqrt{(N-1)N}} & -\frac{(N-1)}{\sqrt{(N-1)N}} \\
\frac{1}{\sqrt{N}} & \cdots & \cdots & \cdots & \frac{1}{\sqrt{N}}
\end{array}\right)
$$
and it can be written in the form $M_N = D_N A_N$ with a diagonal matrix $D_N$, 
$$
M_N= \left(
\begin{array}{ccccc}
\frac{1}{\sqrt{2}} & & & &  \\
 & \frac{1}{\sqrt{6}} &  & & \\
 &  & \ddots &  &  \\
 &  &  & \frac{1}{\sqrt{(N-1)N}} & \\
 &  &  & & \frac{1}{\sqrt{N}}
\end{array}\right)
\left(
\begin{array}{ccccc}
1 & -1 & 0 & \cdots & 0 \\
1 & 1 & -2& \ddots & \vdots \\
\vdots &  & \ddots & \ddots & 0 \\
1 & \cdots & \cdots & 1 & -(N-1) \\
1 & \cdots & \cdots & \cdots & 1
\end{array}
\right).
$$

Let us prove the claim by recurrence. For $N=2$ is clear that $\Det(D_2)=1/2$ and $\Det(A_2)=2$, which implies $\det(M_2)=1$. Then, supposing that $\det(M_{N-1})=1$ we have
\begin{equation}\label{eq:det}
\det(M_{N-1})=
\left(\prod_{k=1}^{N-2} \frac{1}{\sqrt{k(k+1)}} \times \frac{1}{\sqrt{(N-1)}}\right) \Det(A_{N-1}) = 1
\end{equation}
since $\Det(D_{N-1})$ is easily computed. Moreover, we have the following relation $\Det(A_N) = N \Det(A_{N-1})$. Hence we deduce that
$$
\begin{aligned}
\Det(M_N) 
&= \left(\prod_{k=1}^{N-1} \frac{1}{\sqrt{k(k+1)}} \times \frac{1}{\sqrt{N}}\right)\Det(A_N)\\
&= \left(\prod_{k=1}^{N-2} \frac{1}{\sqrt{k(k+1)}} \times \frac{1}{\sqrt{(N-1)N}}\times\frac{1}{\sqrt{N}}\right) N \Det(A_{N-1})\\
&= 1
\end{aligned}
$$  
thanks to \eqref{eq:det}, which concludes the proof of the claim.

\end{proof}

\subsection{Regularity lemma}\label{ap:reg}

\begin{lemma}\label{lem:aux}
Let $f\in \PPP(\R^d)$. Suppose $f\in L^p\cap L_s(\R^d)$ for $p>1$ and $s>0$. Then $f\in {L}^q_m(\R^d)$ with $q<p$ and $m=s(p-q)(p-1)$.
\end{lemma}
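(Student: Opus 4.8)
The plan is to obtain the weighted bound from a single application of Hölder's inequality, by writing the integrand $|v|^m f(v)^q$ as a multiplicative interpolation between the two quantities that are under control: $|v|^s f(v)$, whose integral $M_s(f):=\int_{\R^d}|v|^s f\,dv$ is finite by the hypothesis $f\in L_s(\R^d)$, and $f(v)^p$, whose integral is finite since $f\in L^p(\R^d)$. Since for $q=1$ one has $m=s$ and the conclusion $f\in L^1_s$ is precisely the hypothesis, and since the range used in Lemma~\ref{lem:h-Lq} is $q>1$, I would assume throughout that $1<q<p$.

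The key point is to choose the interpolation exponent $\alpha\in(0,1)$ correctly. I would impose simultaneously that the weights match, $s\alpha=m$, and that the powers of $f$ match, $\alpha+p(1-\alpha)=q$; the second equation forces $\alpha=\frac{p-q}{p-1}$, which lies in $(0,1)$ exactly because $1<q<p$, and then $m=s\alpha=\frac{s(p-q)}{p-1}$, as claimed. For this $\alpha$ one has the pointwise identity, valid for a.e.\ $v$ (on $\{f=0\}$ both sides vanish, since $\alpha>0$ and $1-\alpha>0$),
$$|v|^m f(v)^q=\bigl(|v|^s f(v)\bigr)^{\alpha}\bigl(f(v)^p\bigr)^{1-\alpha}.$$
Integrating over $\R^d$ and applying Hölder's inequality with the conjugate exponents $1/\alpha$ and $1/(1-\alpha)$ then gives
$$\int_{\R^d}|v|^m f^q\,dv\le\Bigl(\int_{\R^d}|v|^s f\,dv\Bigr)^{\alpha}\Bigl(\int_{\R^d}f^p\,dv\Bigr)^{1-\alpha}=M_s(f)^{\alpha}\,\norm{f}_{L^p}^{p(1-\alpha)}<\infty,$$
so that $\norm{f}_{L^q_m}\le M_s(f)^{\alpha/q}\,\norm{f}_{L^p}^{p(1-\alpha)/q}<\infty$, which is the assertion.

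There is essentially no obstacle here beyond the exponent bookkeeping: the only step needing attention is the verification that $\alpha\in(0,1)$, equivalently $1<q<p$, which is exactly what makes the exponents $1/\alpha$, $1/(1-\alpha)$ admissible in Hölder. In particular no splitting of $\R^d$ into a bounded region and its complement is required, because both weights $|v|^m$ (with $m>0$) and $|v|^s$ are locally bounded, so the single global estimate above simultaneously controls the integral near the origin and at infinity.
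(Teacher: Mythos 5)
Your proof is correct and takes essentially the same route as the paper: both reduce to the multiplicative decomposition $|v|^m f^q=(|v|^s f)^{\alpha}(f^p)^{1-\alpha}$ with $\alpha=(p-q)/(p-1)$ and a single application of H\"older. The paper phrases the H\"older step with an auxiliary exponent $r$ and a parameter $\alpha_{\text{paper}}=p(q-1)/(p-1)$ (which plays the role of $p(1-\alpha)$ in your notation), and then optimizes $r,\alpha_{\text{paper}}$ to land on the same identity; your version is cleaner because you impose the matching conditions $s\alpha=m$ and $\alpha+p(1-\alpha)=q$ up front. One small point to tighten: the paper's $L^q_m$ norm is taken with the weight $(1+|v|^2)^{m/2}$, not $|v|^m$, so the paper first writes $\|f\|_{L^q_m}^q\le C\bigl(\int f^q+\int |v|^m f^q\bigr)$ and bounds the extra term by $\|f\|_{L^q}^q\le\|f\|_{L^p}^q$ (which uses $f\in L^1\cap L^p$ and $q<p$). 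Your last displayed inequality bounds only $\int|v|^m f^q$ and then asserts the $L^q_m$ bound directly; to be precise you should add the $\int f^q$ piece, contrary to your closing remark that no split is needed (the issue is not the behavior at infinity but that $|v|^m$ underestimates $(1+|v|^2)^{m/2}$ near the origin). This is a one-line fix and does not affect the substance. Finally, note the lemma's stated exponent ``$m=s(p-q)(p-1)$'' is a typo for $m=s(p-q)/(p-1)$, which is what both you and the paper's proof actually derive.
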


\begin{proof}
Let us compute the $L^q_m$ norm of $f$,
\begin{equation*}
\begin{aligned}
{\norm{f}}_{L^q_m}^{q} &= \int {( 1 + {\abs{v}}^{2})}^{m/2} \, {f(v)}^q \, dv \\
&\leq C \left( \int {f(v)}^q\,dv + \int {\abs{v}}^{m} \, {f(v)}^q \, dv \right).
\end{aligned}
\end{equation*}
For the first term we have ${\norm{f}}_{L^q}^{q} \leq {\norm{f}}_{L^p}^{q}$ and for the second one we obtain
\begin{equation*}
\begin{aligned}
\int {\abs{v}}^{m} \, {f(v)}^q \, dv
&\leq {\left( \int  {\abs{v}}^{m r/(r-1)} {f(v)}^{(q-\alpha)r/(r-1)}      \right)}^{(r-1)/r}
{\left( \int {f(v)}^{\alpha r}      \right)}^{1/r}
\end{aligned}
\end{equation*}
by Holder's inequality for some $r>1$ and $0<\alpha < q$. Now choosing $r = p/\alpha $ and choosing $\alpha$ such that $(q-\alpha)r/(r-1) = 1$, i.e. $\alpha = p(q-1)/(p-1)$ we obtain
\begin{equation*}
\begin{aligned}
\int {\abs{v}}^{m} \, {f(v)}^q \, dv 
&\leq {\left( \int  {\abs{v}}^{m (p-1)/(p-q)} f(v)      \right)}^{(p-q)/(p-1)}
{\left( \int {f(v)}^{p}      \right)}^{(q-1)/(p-1)}.
\end{aligned}
\end{equation*}
Finally, choosing $m=s(p-q)/(p-1)$ we conclude with
\begin{equation*}
\begin{aligned}
{\norm{f}}_{L^q_m}^{q} 
&\leq C\left( {\norm{f}}_{L^p}^{q} + {\norm{f}}_{L_s}^{(p-q)/(p-1)}   \, {\norm{f}}_{L^p}^{p(q-1)/(p-1)}\right).
\end{aligned}
\end{equation*}

\end{proof}


\bibliographystyle{acm}
\bibliography{bib-SphereBoltzmann}

\begin{thebibliography}{10}

\bibitem{BartheCEM2006}
{\sc Barthe, F., Cordero-Erausquin, D., and Maurey, B.}
\newblock Entropy of spherical marginals and related inequalities.
\newblock {\em J. Math. Pures Appl. (9) 86}, 2 (2006), 89--99.

\bibitem{CarlenFisher}
{\sc Carlen, E.~A.}
\newblock Superadditivity of {F}isher's information and logarithmic {S}obolev
  inequalities.
\newblock {\em J. Funct. Anal. 101}, 1 (1991), 194--211.

\bibitem{CCLLV}
{\sc Carlen, E.~A., Carvalho, M.~C., Le~Roux, J., Loss, M., and Villani, C.}
\newblock Entropy and chaos in the {K}ac model.
\newblock {\em Kinet. Relat. Models 3}, 1 (2010), 85--122.

\bibitem{CCL}
{\sc Carlen, E.~A., Carvalho, M.~C., and Loss, M.}
\newblock Determination of the spectral gap for {K}ac's master equation and
  related stochastic evolution.
\newblock {\em Acta Math. 191}, 1 (2003), 1--54.

\bibitem{CarlenLL2004}
{\sc Carlen, E.~A., Lieb, E.~H., and Loss, M.}
\newblock A sharp analog of {Y}oung's inequality on {$\mathbb S^N$} and related
  entropy inequalities.
\newblock {\em J. Geom. Anal. 14}, 3 (2004), 487--520.

\bibitem{DiaconisFreedman1987}
{\sc Diaconis, P., and Freedman, D.}
\newblock A dozen de {F}inetti-style results in search of a theory.
\newblock {\em Ann. Inst. H. Poincar\'e Probab. Statist. 23}, 2, suppl. (1987),
  397--423.

\bibitem{Einav}
{\sc Einav, A.}
\newblock A counter-example to {C}ercignani's conjecture for the
  $d$-dimensional {K}ac model.
\newblock {\em J. Stat. Phys. 148}, 6 (2012), 1076--1103.

\bibitem{HaurayMischler}
{\sc Hauray, M., and Mischler, S.}
\newblock On {K}ac's chaos and related problems.
\newblock Preprint 2012, hal-00682782.

\bibitem{Kac1956}
{\sc Kac, M.}
\newblock Foundations of kinetic theory.
\newblock In {\em Proceedings of the {T}hird {B}erkeley {S}ymposium on
  {M}athematical {S}tatistics and {P}robability, 1954--1955, vol. {III}\/}
  (Berkeley and Los Angeles, 1956), University of California Press,
  pp.~171--197.

\bibitem{LottVillani}
{\sc Lott, J., and Villani, C.}
\newblock Ricci curvature for metric measure spaces via optimal transport.
\newblock {\em Ann. of Math. 169}, 3 (2009), 903--991.

\bibitem{CoursMischler}
{\sc Mischler, S.}
\newblock Introduction aux limites de champ moyen pour des syst\`emes de
  particules.
\newblock Cours en ligne C.E.L.
  http://cel.archives-ouvertes.fr/cel-00576329/fr/.

\bibitem{MischlerEDPX}
{\sc Mischler, S.}
\newblock Sur le programme de {K}ac (concernant les limites de champ moyen).
\newblock S\'eminaire EDP-X, {D}\'ecembre 2010.

\bibitem{MMchaos}
{\sc Mischler, S., and Mouhot, C.}
\newblock Kac's program in kinetic theory.
\newblock {\em Inventiones mathematicae\/} (2012), 1--147.

\bibitem{MMWchaos}
{\sc Mischler, S., Mouhot, C., and Wennberg, B.}
\newblock A new approach to quantitative chaos propagation for drift, diffusion
  and jump processes.
\newblock Preprint 2011, arXiv:1101.4727.

\bibitem{OttoVillani}
{\sc Otto, F., and Villani, C.}
\newblock Generalization of an inequality by {T}alagrand and links with the
  logarithmic {S}obolev inequality.
\newblock {\em J. Funct. Anal. 173}, 2 (2000), 361--400.

\bibitem{S6}
{\sc Sznitman, A.-S.}
\newblock Topics in propagation of chaos.
\newblock In {\em \'{E}cole d'\'{E}t\'e de {P}robabilit\'es de {S}aint-{F}lour
  {XIX}---1989}, vol.~1464 of {\em Lecture Notes in Math.} Springer, Berlin,
  1991, pp.~165--251.

\bibitem{Villani-BoltzmannBook}
{\sc Villani, C.}
\newblock A review of mathematical topics in collisional kinetic theory.
\newblock In {\em Handbook of mathematical fluid dynamics, {V}ol. {I}}.
  North-Holland, Amsterdam, 2002, pp.~71--305.

\bibitem{VillaniOTO&N}
{\sc Villani, C.}
\newblock {\em Optimal transport}, vol.~338 of {\em Grundlehren der
  Mathematischen Wissenschaften [Fundamental Principles of Mathematical
  Sciences]}.
\newblock Springer-Verlag, Berlin, 2009.
\newblock Old and new.

\end{thebibliography}

\end{document}